\newcommand\ttt{\text{-}}
	\newcommand{\Rmnum}[1]{\expandafter\@slowromancap\romannumeral #1@}
	\newcommand{\al}{\alpha} \newcommand{\be}{\beta}
	 \newcommand{\Sg}{\Sigma}
	  \newcommand{\cF}{{\mathcal F}}
	\newcommand{\cI}{{\mathcal I}} 
	\newcommand{\cM}{{\mathcal M}}  
	\newcommand{\cP}{{\mathcal P}} \newcommand{\cQ}{{\mathcal Q}}
	\newcommand{\frkg}{{\mathfrak g}} \newcommand{\frkh}{{\mathfrak h}}
	\newcommand{\xrar}[1]{\xrightarrow{#1}}
	\DeclareMathOperator{\const}{{const}}
	\DeclareMathOperator{\Der}{{Der}}
	\DeclareMathOperator{\gr}{{gr}}
	\DeclareMathOperator{\id}{{id}}
	\newcommand{\Fin}{\mathsf{Fin}}
	\newcommand{\Ord}{\mathsf{Ord}}
	\newcommand{\Vect}{\mathsf{Vect}}
	\newcommand\SymOp{\mathsf{SymOp}}
	\newcommand\NSymOp{\mathsf{NSymOp}}
	\newcommand\ShfOp{\mathsf{ShfOp}}
	\newcommand\ICom{\mathsf{ICom}}
	\newcommand\DCom{\mathsf{DCom}}
	\newcommand\As{\mathsf{As}}
	\newcommand\Com{\mathsf{Com}}
	\newcommand\Lie{\mathsf{Lie}}
	\newcommand\LieR{\mathsf{Lie\ttt Rinehart}}
	\newcommand\DerCom{\mathsf{DerCom}}
	\newcommand{\AffHS}{\mathsf{AffHS}}
	\newcommand{\SC}{\mathsf{SC}}
	\newcommand{\HSC}{H_0(\mathsf{SC}^{\textrm{vor}})}
	\newcommand{\LP}{\mathsf{LP}}
	\newcommand{\MLie}{\mathsf{MLie}}
	\newcommand{\limto}{{\displaystyle\lim_{\longrightarrow}}}
	\newcommand{\rightlim}{\mathop{\limto}}
	\newcommand{\leftlim}{\mathop{\displaystyle\lim_{\longleftarrow}}}
	\newcommand{\limfromn}{\leftlim\limits_{\raise3pt\hbox{$n$}}}
	\newcommand{\limton}{\rightlim\limits_{\raise3pt\hbox{$n$}}}
	\newcommand{\rightlimit}[1]{\mathop{\lim\limits_{\longrightarrow}}\limits%
	                    _{\raise3pt\hbox{$\scriptstyle #1$}}}
	\newcommand{\leftlimit}[1]{\mathop{\lim\limits_{\longleftarrow}}\limits%
	                    _{\raise3pt\hbox{$\scriptstyle #1$}}}
	\DeclareMathOperator{\st}{{st}}
	\DeclareMathOperator{\unc}{{unc}}
	\DeclareMathOperator{\lt}{{lt}}
	\DeclareMathOperator{\rd}{{rd}}
	\newtheorem{theo}{Theorem}[section] % Regular theorem
	\newtheorem{pr}[theo]{Proposition}
	\newtheorem{cor}[theo]{Corollary}
	\newtheorem{example}[theo]{Example}
	\theoremstyle{definition}
	\newtheorem{definition}[theo]{Definition}
	\theoremstyle{remark}
	\newtheorem{remark}[theo]{Remark}
\tikzset{ver/.style={circle, draw,inner sep=2pt}}
	\newcommand{\unTreeSD}[2]{
		\begin{tikzpicture} [scale=0.6]
		    \coordinate (rt) at (0,-1);
		    \node [ver] (z) {$#1$};
		    \node (t) at (0,1) {\tiny{1}};
		    \node at (1,0) {$#2$};
		        \draw (z) edge (rt);
		        \draw [densely dotted] (t) edge (z); 
		\end{tikzpicture}
	}
	\newcommand{\unTreeDS}[2]{
		\begin{tikzpicture} [scale=0.6]
		    \coordinate (rt) at (0,-1);
		    \node [ver] (z) {$#1$};
		    \node (t) at (0,1) {\tiny{1}};
		    \node at (1,0) {$#2$};
		        \draw [densely dotted] (z) edge (rt);
		        \draw (t) edge (z); 
		\end{tikzpicture}
	}
	\newcommand{\quadTreeSSS}[4]{
		\begin{tikzpicture} [scale=0.6]
		    \coordinate (rt) at (0,-1);
		    \node[ver] (z) {{$#1$}};
		    \node (l) at (-1,1) {\tiny{$#2$}} ;
		    \node (r) at (1,1) {\tiny{$#3$}} ;
		    \node at (2,0) {$#4$};
		        \draw (z) edge (rt); 
		        \draw (l) edge (z);
		        \draw (r) edge (z);
		\end{tikzpicture}
	}
	\newcommand{\quadTreeSDS}[4]{
		\begin{tikzpicture} [scale=0.6]
		    \coordinate (rt) at (0,-1);
		    \node [ver] (z) {{$#1$}};
		    \node (l) at (-1,1) {\tiny{$#2$}} ;
		    \node (r) at (1,1) {\tiny{$#3$}} ;
		    \node at (2,0) {$#4$};
		        \draw (z) edge (rt); 
		        \draw [densely dotted] (l) edge (z);
		        \draw (r) edge (z);
		\end{tikzpicture}
	}
	\newcommand{\quadTreeDDS}[4]{
		\begin{tikzpicture} [scale=0.6]
		    \coordinate (rt) at (0,-1);
		    \node [ver] (z) {{$#1$}};
		    \node (l) at (-1,1) {\tiny{$#2$}} ;
		    \node (r) at (1,1) {\tiny{$#3$}} ;
		    \node at (2,0) {$#4$};
		        \draw [densely dotted] (z) edge (rt); 
		        \draw [densely dotted] (l) edge (z);
		        \draw (r) edge (z);
		\end{tikzpicture}
	}
	\newcommand{\quadTreeSSD}[4]{
		\begin{tikzpicture} [scale=0.6]
		    \coordinate (rt) at (0,-1);
		    \node [ver] (z) {{$#1$}};
		    \node (l) at (-1,1) {\tiny{$#2$}} ;
		    \node (r) at (1,1) {\tiny{$#3$}} ;
		    \node at (2,0) {$#4$};
		        \draw (z) edge (rt); 
		        \draw (l) edge (z);
		        \draw [densely dotted] (r) edge (z);
		\end{tikzpicture}
	}
	\newcommand{\quadTreeDSD}[4]{
		\begin{tikzpicture} [scale=0.6]
		    \coordinate (rt) at (0,-1);
		    \node [ver] (z) {{$#1$}};
		    \node (l) at (-1,1) {\tiny{$#2$}} ;
		    \node (r) at (1,1) {\tiny{$#3$}} ;
		    \node at (2,0) {$#4$};
		        \draw [densely dotted] (z) edge (rt); 
		        \draw (l) edge (z);
		        \draw [densely dotted] (r) edge (z);
		\end{tikzpicture}
	}
	\newcommand{\quadTreeDDD}[4]{
		\begin{tikzpicture} [scale=0.6]
		    \coordinate (rt) at (0,-1);
		    \node [ver] (z) {{$#1$}};
		    \node (l) at (-1,1) {\tiny{$#2$}} ;
		    \node (r) at (1,1) {\tiny{$#3$}} ;
		    \node at (2,0) {$#4$};
		        \draw [densely dotted] (z) edge (rt); 
		        \draw [densely dotted] (l) edge (z);
		        \draw [densely dotted] (r) edge (z);
		\end{tikzpicture}
	}
	\newcommand{\cubLeftTreeSSSSS}[6]{
		\begin{tikzpicture} [scale=0.6]
		    \coordinate (rt) at (0,-1);
		    \node [ver] (z) {{$#1$}};
		    \node [ver] (l1v1) at (-1,1) {{$#2$}};
		    \node (l1v2) at (1,1) {\tiny{$#5$}};
		    \node (l2v1) at (-2,2) {\tiny{$#3$}};
		    \node (l2v2) at (0,2) {\tiny{$#4$}};
		    \node at (2,0) {$#6$};
		        \draw (z) edge (rt);
		        \draw (l1v1) edge (z);
		        \draw (l1v2) edge (z); 
			\draw (l2v1) edge (l1v1);
			\draw (l2v2) edge (l1v1);
		\end{tikzpicture}
	}
	\newcommand{\cubLeftTreeSSDSS}[6]{
		\begin{tikzpicture} [scale=0.6]
		    \coordinate (rt) at (0,-1);
		    \node [ver] (z) {{$#1$}};
		    \node [ver] (l1v1) at (-1,1) {{$#2$}};
		    \node (l1v2) at (1,1) {\tiny{$#5$}};
		    \node (l2v1) at (-2,2) {\tiny{$#3$}};
		    \node (l2v2) at (0,2) {\tiny{$#4$}};
		    \node at (2,0) {$#6$};
		        \draw (z) edge (rt);
		        \draw (l1v1) edge (z);
		        \draw [densely dotted] (l1v2) edge (z); 
			\draw (l2v1) edge (l1v1);
			\draw (l2v2) edge (l1v1);
		\end{tikzpicture}
	}
	\newcommand{\cubLeftTreeSSSSD}[6]{
		\begin{tikzpicture} [scale=0.6]
		    \coordinate (rt) at (0,-1);
		    \node [ver] (z) {{$#1$}};
		    \node [ver] (l1v1) at (-1,1) {{$#2$}};
		    \node (l1v2) at (1,1) {\tiny{$#5$}};
		    \node (l2v1) at (-2,2) {\tiny{$#3$}};
		    \node (l2v2) at (0,2) {\tiny{$#4$}};
		    \node at (2,0) {$#6$};
		        \draw (z) edge (rt);
		        \draw (l1v1) edge (z);
		        \draw (l1v2) edge (z); 
			\draw (l2v1) edge (l1v1);
			\draw [densely dotted] (l2v2) edge (l1v1);
		\end{tikzpicture}
	}
	\newcommand{\cubLeftTreeSSSDS}[6]{
		\begin{tikzpicture} [scale=0.6]
		    \coordinate (rt) at (0,-1);
		    \node [ver] (z) {{$#1$}};
		    \node [ver] (l1v1) at (-1,1) {{$#2$}};
		    \node (l1v2) at (1,1) {\tiny{$#5$}};
		    \node (l2v1) at (-2,2) {\tiny{$#3$}};
		    \node (l2v2) at (0,2) {\tiny{$#4$}};
		    \node at (2,0) {$#6$};
		        \draw (z) edge (rt);
		        \draw (l1v1) edge (z);
		        \draw (l1v2) edge (z); 
			\draw [densely dotted] (l2v1) edge (l1v1);
			\draw (l2v2) edge (l1v1);
		\end{tikzpicture}
	}
	\newcommand{\cubLeftTreeSSDSD}[6]{
		\begin{tikzpicture} [scale=0.6]
		    \coordinate (rt) at (0,-1);
		    \node [ver] (z) {{$#1$}};
		    \node [ver] (l1v1) at (-1,1) {{$#2$}};
		    \node (l1v2) at (1,1) {\tiny{$#5$}};
		    \node (l2v1) at (-2,2) {\tiny{$#3$}};
		    \node (l2v2) at (0,2) {\tiny{$#4$}};
		    \node at (2,0) {$#6$};
		        \draw (z) edge (rt);
		        \draw (l1v1) edge (z);
		        \draw [densely dotted] (l1v2) edge (z); 
			\draw (l2v1) edge (l1v1);
			\draw [densely dotted] (l2v2) edge (l1v1);
		\end{tikzpicture}
	}
	\newcommand{\cubLeftTreeSDSSD}[6]{
		\begin{tikzpicture} [scale=0.6]
		    \coordinate (rt) at (0,-1);
		    \node [ver] (z) {{$#1$}};
		    \node [ver] (l1v1) at (-1,1) {{$#2$}};
		    \node (l1v2) at (1,1) {\tiny{$#5$}};
		    \node (l2v1) at (-2,2) {\tiny{$#3$}};
		    \node (l2v2) at (0,2) {\tiny{$#4$}};
		    \node at (2,0) {$#6$};
		        \draw (z) edge (rt);
		        \draw [densely dotted] (l1v1) edge (z);
		        \draw (l1v2) edge (z); 
			\draw (l2v1) edge (l1v1);
			\draw [densely dotted] (l2v2) edge (l1v1);
		\end{tikzpicture}
	}
	\newcommand{\cubLeftTreeSDSDS}[6]{
		\begin{tikzpicture} [scale=0.6]
		    \coordinate (rt) at (0,-1);
		    \node [ver] (z) {{$#1$}};
		    \node [ver] (l1v1) at (-1,1) {{$#2$}};
		    \node (l1v2) at (1,1) {\tiny{$#5$}};
		    \node (l2v1) at (-2,2) {\tiny{$#3$}};
		    \node (l2v2) at (0,2) {\tiny{$#4$}};
		    \node at (2,0) {$#6$};
		        \draw (z) edge (rt);
		        \draw [densely dotted] (l1v1) edge (z);
		        \draw (l1v2) edge (z); 
			\draw [densely dotted] (l2v1) edge (l1v1);
			\draw (l2v2) edge (l1v1);
		\end{tikzpicture}
	}
	\newcommand{\cubLeftTreeSDSDD}[6]{
		\begin{tikzpicture} [scale=0.6]
		    \coordinate (rt) at (0,-1);
		    \node [ver] (z) {{$#1$}};
		    \node [ver] (l1v1) at (-1,1) {{$#2$}};
		    \node (l1v2) at (1,1) {\tiny{$#5$}};
		    \node (l2v1) at (-2,2) {\tiny{$#3$}};
		    \node (l2v2) at (0,2) {\tiny{$#4$}};
		    \node at (2,0) {$#6$};
		        \draw (z) edge (rt);
		        \draw [densely dotted] (l1v1) edge (z);
		        \draw (l1v2) edge (z); 
			\draw [densely dotted] (l2v1) edge (l1v1);
			\draw [densely dotted] (l2v2) edge (l1v1);
		\end{tikzpicture}
	}
	\newcommand{\cubLeftTreeDSDSS}[6]{
		\begin{tikzpicture} [scale=0.6]
		    \coordinate (rt) at (0,-1);
		    \node [ver] (z) {{$#1$}};
		    \node [ver] (l1v1) at (-1,1) {{$#2$}};
		    \node (l1v2) at (1,1) {\tiny{$#5$}};
		    \node (l2v1) at (-2,2) {\tiny{$#3$}};
		    \node (l2v2) at (0,2) {\tiny{$#4$}};
		    \node at (2,0) {$#6$};
		        \draw [densely dotted] (z) edge (rt);
		        \draw (l1v1) edge (z);
		        \draw [densely dotted] (l1v2) edge (z); 
			\draw (l2v1) edge (l1v1);
			\draw (l2v2) edge (l1v1);
		\end{tikzpicture}
	}
	\newcommand{\cubLeftTreeDSDSD}[6]{
		\begin{tikzpicture} [scale=0.6]
		    \coordinate (rt) at (0,-1);
		    \node [ver] (z) {{$#1$}};
		    \node [ver] (l1v1) at (-1,1) {{$#2$}};
		    \node (l1v2) at (1,1) {\tiny{$#5$}};
		    \node (l2v1) at (-2,2) {\tiny{$#3$}};
		    \node (l2v2) at (0,2) {\tiny{$#4$}};
		    \node at (2,0) {$#6$};
		        \draw [densely dotted] (z) edge (rt);
		        \draw (l1v1) edge (z);
		        \draw [densely dotted] (l1v2) edge (z); 
			\draw (l2v1) edge (l1v1);
			\draw [densely dotted] (l2v2) edge (l1v1);
		\end{tikzpicture}
	}
	\newcommand{\cubLeftTreeDDSDS}[6]{
		\begin{tikzpicture} [scale=0.6]
		    \coordinate (rt) at (0,-1);
		    \node [ver] (z) {{$#1$}};
		    \node [ver] (l1v1) at (-1,1) {{$#2$}};
		    \node (l1v2) at (1,1) {\tiny{$#5$}};
		    \node (l2v1) at (-2,2) {\tiny{$#3$}};
		    \node (l2v2) at (0,2) {\tiny{$#4$}};
		    \node at (2,0) {$#6$};
		        \draw [densely dotted] (z) edge (rt);
		        \draw [densely dotted] (l1v1) edge (z);
		        \draw (l1v2) edge (z); 
			\draw [densely dotted] (l2v1) edge (l1v1);
			\draw (l2v2) edge (l1v1);
		\end{tikzpicture}
	}
	\newcommand{\cubLeftTreeDDSSD}[6]{
		\begin{tikzpicture} [scale=0.6]
		    \coordinate (rt) at (0,-1);
		    \node [ver] (z) {{$#1$}};
		    \node [ver] (l1v1) at (-1,1) {{$#2$}};
		    \node (l1v2) at (1,1) {\tiny{$#5$}};
		    \node (l2v1) at (-2,2) {\tiny{$#3$}};
		    \node (l2v2) at (0,2) {\tiny{$#4$}};
		    \node at (2,0) {$#6$};
		        \draw [densely dotted] (z) edge (rt);
		        \draw [densely dotted] (l1v1) edge (z);
		        \draw (l1v2) edge (z); 
			\draw (l2v1) edge (l1v1);
			\draw [densely dotted] (l2v2) edge (l1v1);
		\end{tikzpicture}
	}
	\newcommand{\cubLeftTreeDDDSD}[6]{
		\begin{tikzpicture} [scale=0.6]
		    \coordinate (rt) at (0,-1);
		    \node [ver] (z) {{$#1$}};
		    \node [ver] (l1v1) at (-1,1) {{$#2$}};
		    \node (l1v2) at (1,1) {\tiny{$#5$}};
		    \node (l2v1) at (-2,2) {\tiny{$#3$}};
		    \node (l2v2) at (0,2) {\tiny{$#4$}};
		    \node at (2,0) {$#6$};
		        \draw [densely dotted] (z) edge (rt);
		        \draw [densely dotted] (l1v1) edge (z);
		        \draw [densely dotted] (l1v2) edge (z); 
			\draw (l2v1) edge (l1v1);
			\draw [densely dotted] (l2v2) edge (l1v1);
		\end{tikzpicture}
	}
	\newcommand{\cubLeftTreeDDDDS}[6]{
		\begin{tikzpicture} [scale=0.6]
		    \coordinate (rt) at (0,-1);
		    \node [ver] (z) {{$#1$}};
		    \node [ver] (l1v1) at (-1,1) {{$#2$}};
		    \node (l1v2) at (1,1) {\tiny{$#5$}};
		    \node (l2v1) at (-2,2) {\tiny{$#3$}};
		    \node (l2v2) at (0,2) {\tiny{$#4$}};
		    \node at (2,0) {$#6$};
		        \draw [densely dotted] (z) edge (rt);
		        \draw [densely dotted] (l1v1) edge (z);
		        \draw [densely dotted] (l1v2) edge (z); 
			\draw [densely dotted] (l2v1) edge (l1v1);
			\draw (l2v2) edge (l1v1);
		\end{tikzpicture}
	}
	\newcommand{\cubLeftTreeDDDDD}[6]{
		\begin{tikzpicture} [scale=0.6]
		    \coordinate (rt) at (0,-1);
		    \node [ver] (z) {{$#1$}};
		    \node [ver] (l1v1) at (-1,1) {{$#2$}};
		    \node (l1v2) at (1,1) {\tiny{$#5$}};
		    \node (l2v1) at (-2,2) {\tiny{$#3$}};
		    \node (l2v2) at (0,2) {\tiny{$#4$}};
		    \node at (2,0) {$#6$};
		        \draw [densely dotted] (z) edge (rt);
		        \draw [densely dotted] (l1v1) edge (z);
		        \draw [densely dotted] (l1v2) edge (z); 
			\draw [densely dotted] (l2v1) edge (l1v1);
			\draw [densely dotted] (l2v2) edge (l1v1);
		\end{tikzpicture}
	}
	\newcommand{\cubRightTreeSSSSS}[6]{
		\begin{tikzpicture} [scale=0.6]
		    \coordinate (rt) at (0,-1);
		    \node[ver] (z){$#1$};
		    \node (l1v1) at (-1,1) {\tiny{$#3$}};
		    \node[ver] (l1v2) at (1,1) {$#2$};
		    \node (l2v1) at (0,2) {\tiny{$#4$}};
		    \node (l2v2) at (2,2) {\tiny{$#5$}};
		    \node at (2,0) {$#6$};
		        \draw (z) edge (rt);
		        \draw (l1v1) edge (z);
		        \draw (l1v2) edge (z);
			\draw (l2v1) edge (l1v2);
			\draw (l2v2) edge (l1v2);
		\end{tikzpicture}
	}
	\newcommand{\cubRightTreeSSSSD}[6]{
		\begin{tikzpicture} [scale=0.6]
		    \coordinate (rt) at (0,-1);
		    \node[ver] (z){$#1$};
		    \node (l1v1) at (-1,1) {\tiny{$#3$}};
		    \node[ver] (l1v2) at (1,1) {$#2$};
		    \node (l2v1) at (0,2) {\tiny{$#4$}};
		    \node (l2v2) at (2,2) {\tiny{$#5$}};
		    \node at (2,0) {$#6$};
		        \draw (z) edge (rt);
		        \draw (l1v1) edge (z);
		        \draw (l1v2) edge (z);
			\draw (l2v1) edge (l1v2);
			\draw [densely dotted] (l2v2) edge (l1v2);
		\end{tikzpicture}
	}
	\newcommand{\cubRightTreeSSSDS}[6]{
		\begin{tikzpicture} [scale=0.6]
		    \coordinate (rt) at (0,-1);
		    \node[ver] (z){$#1$};
		    \node (l1v1) at (-1,1) {\tiny{$#3$}};
		    \node[ver] (l1v2) at (1,1) {$#2$};
		    \node (l2v1) at (0,2) {\tiny{$#4$}};
		    \node (l2v2) at (2,2) {\tiny{$#5$}};
		    \node at (2,0) {$#6$};
		        \draw (z) edge (rt);
		        \draw (l1v1) edge (z);
		        \draw (l1v2) edge (z);
			\draw [densely dotted] (l2v1) edge (l1v2);
			\draw (l2v2) edge (l1v2);
		\end{tikzpicture}
	}
	\newcommand{\cubRightTreeSSDSD}[6]{
		\begin{tikzpicture} [scale=0.6]
		    \coordinate (rt) at (0,-1);
		    \node[ver] (z){$#1$};
		    \node (l1v1) at (-1,1) {\tiny{$#3$}};
		    \node[ver] (l1v2) at (1,1) {$#2$};
		    \node (l2v1) at (0,2) {\tiny{$#4$}};
		    \node (l2v2) at (2,2) {\tiny{$#5$}};
		    \node at (2,0) {$#6$};
		        \draw (z) edge (rt);
		        \draw (l1v1) edge (z);
		        \draw [densely dotted] (l1v2) edge (z);
			\draw (l2v1) edge (l1v2);
			\draw [densely dotted] (l2v2) edge (l1v2);
		\end{tikzpicture}
	}
	\newcommand{\cubRightTreeSSDDS}[6]{
		\begin{tikzpicture} [scale=0.6]
		    \coordinate (rt) at (0,-1);
		    \node[ver] (z){$#1$};
		    \node (l1v1) at (-1,1) {\tiny{$#3$}};
		    \node[ver] (l1v2) at (1,1) {$#2$};
		    \node (l2v1) at (0,2) {\tiny{$#4$}};
		    \node (l2v2) at (2,2) {\tiny{$#5$}};
		    \node at (2,0) {$#6$};
		        \draw (z) edge (rt);
		        \draw (l1v1) edge (z);
		        \draw [densely dotted] (l1v2) edge (z);
			\draw [densely dotted] (l2v1) edge (l1v2);
			\draw (l2v2) edge (l1v2);
		\end{tikzpicture}
	}
	\newcommand{\cubRightTreeSSDDD}[6]{
		\begin{tikzpicture} [scale=0.6]
		    \coordinate (rt) at (0,-1);
		    \node[ver] (z){$#1$};
		    \node (l1v1) at (-1,1) {\tiny{$#3$}};
		    \node[ver] (l1v2) at (1,1) {$#2$};
		    \node (l2v1) at (0,2) {\tiny{$#4$}};
		    \node (l2v2) at (2,2) {\tiny{$#5$}};
		    \node at (2,0) {$#6$};
		        \draw (z) edge (rt);
		        \draw (l1v1) edge (z);
		        \draw [densely dotted] (l1v2) edge (z);
			\draw [densely dotted] (l2v1) edge (l1v2);
			\draw [densely dotted] (l2v2) edge (l1v2);
		\end{tikzpicture}
	}
	\newcommand{\cubRightTreeSDSSS}[6]{
		\begin{tikzpicture} [scale=0.6]
		    \coordinate (rt) at (0,-1);
		    \node[ver] (z){$#1$};
		    \node (l1v1) at (-1,1) {\tiny{$#3$}};
		    \node[ver] (l1v2) at (1,1) {$#2$};
		    \node (l2v1) at (0,2) {\tiny{$#4$}};
		    \node (l2v2) at (2,2) {\tiny{$#5$}};
		    \node at (2,0) {$#6$};
		        \draw (z) edge (rt);
		        \draw [densely dotted] (l1v1) edge (z);
		        \draw (l1v2) edge (z);
			\draw (l2v1) edge (l1v2);
			\draw (l2v2) edge (l1v2);
		\end{tikzpicture}
	}
	\newcommand{\cubRightTreeSDSDS}[6]{
		\begin{tikzpicture} [scale=0.6]
		    \coordinate (rt) at (0,-1);
		    \node[ver] (z){$#1$};
		    \node (l1v1) at (-1,1) {\tiny{$#3$}};
		    \node[ver] (l1v2) at (1,1) {$#2$};
		    \node (l2v1) at (0,2) {\tiny{$#4$}};
		    \node (l2v2) at (2,2) {\tiny{$#5$}};
		    \node at (2,0) {$#6$};
		        \draw (z) edge (rt);
		        \draw [densely dotted] (l1v1) edge (z);
		        \draw (l1v2) edge (z);
			\draw [densely dotted] (l2v1) edge (l1v2);
			\draw (l2v2) edge (l1v2);
		\end{tikzpicture}
	}
	\newcommand{\cubRightTreeDSDSD}[6]{
		\begin{tikzpicture} [scale=0.6]
		    \coordinate (rt) at (0,-1);
		    \node[ver] (z){$#1$};
		    \node (l1v1) at (-1,1) {\tiny{$#3$}};
		    \node[ver] (l1v2) at (1,1) {$#2$};
		    \node (l2v1) at (0,2) {\tiny{$#4$}};
		    \node (l2v2) at (2,2) {\tiny{$#5$}};
		    \node at (2,0) {$#6$};
		        \draw [densely dotted] (z) edge (rt);
		        \draw (l1v1) edge (z);
		        \draw [densely dotted] (l1v2) edge (z);
			\draw (l2v1) edge (l1v2);
			\draw [densely dotted] (l2v2) edge (l1v2);
		\end{tikzpicture}
	}
	\newcommand{\cubRightTreeDSDDS}[6]{
		\begin{tikzpicture} [scale=0.6]
		    \coordinate (rt) at (0,-1);
		    \node[ver] (z){$#1$};
		    \node (l1v1) at (-1,1) {\tiny{$#3$}};
		    \node[ver] (l1v2) at (1,1) {$#2$};
		    \node (l2v1) at (0,2) {\tiny{$#4$}};
		    \node (l2v2) at (2,2) {\tiny{$#5$}};
		    \node at (2,0) {$#6$};
		        \draw [densely dotted] (z) edge (rt);
		        \draw (l1v1) edge (z);
		        \draw [densely dotted] (l1v2) edge (z);
			\draw [densely dotted] (l2v1) edge (l1v2);
			\draw (l2v2) edge (l1v2);
		\end{tikzpicture}
	}
	\newcommand{\cubRightTreeDDSSS}[6]{
		\begin{tikzpicture} [scale=0.6]
		    \coordinate (rt) at (0,-1);
		    \node[ver] (z){$#1$};
		    \node (l1v1) at (-1,1) {\tiny{$#3$}};
		    \node[ver] (l1v2) at (1,1) {$#2$};
		    \node (l2v1) at (0,2) {\tiny{$#4$}};
		    \node (l2v2) at (2,2) {\tiny{$#5$}};
		    \node at (2,0) {$#6$};
		        \draw [densely dotted] (z) edge (rt);
		        \draw [densely dotted] (l1v1) edge (z);
		        \draw (l1v2) edge (z);
			\draw (l2v1) edge (l1v2);
			\draw (l2v2) edge (l1v2);
		\end{tikzpicture}
	}
	\newcommand{\cubRightTreeDSDDD}[6]{
		\begin{tikzpicture} [scale=0.6]
		    \coordinate (rt) at (0,-1);
		    \node[ver] (z){$#1$};
		    \node (l1v1) at (-1,1) {\tiny{$#3$}};
		    \node[ver] (l1v2) at (1,1) {$#2$};
		    \node (l2v1) at (0,2) {\tiny{$#4$}};
		    \node (l2v2) at (2,2) {\tiny{$#5$}};
		    \node at (2,0) {$#6$};
		        \draw [densely dotted] (z) edge (rt);
		        \draw(l1v1) edge (z);
		        \draw [densely dotted] (l1v2) edge (z);
			\draw [densely dotted]  (l2v1) edge (l1v2);
			\draw [densely dotted] (l2v2) edge (l1v2);
		\end{tikzpicture}
	}
	\newcommand{\cubRightTreeDDDSD}[6]{
		\begin{tikzpicture} [scale=0.6]
		    \coordinate (rt) at (0,-1);
		    \node[ver] (z){$#1$};
		    \node (l1v1) at (-1,1) {\tiny{$#3$}};
		    \node[ver] (l1v2) at (1,1) {$#2$};
		    \node (l2v1) at (0,2) {\tiny{$#4$}};
		    \node (l2v2) at (2,2) {\tiny{$#5$}};
		    \node at (2,0) {$#6$};
		        \draw [densely dotted] (z) edge (rt);
		        \draw [densely dotted] (l1v1) edge (z);
		        \draw [densely dotted] (l1v2) edge (z);
			\draw (l2v1) edge (l1v2);
			\draw [densely dotted] (l2v2) edge (l1v2);
		\end{tikzpicture}
	}
	\newcommand{\cubRightTreeDDDDS}[6]{
		\begin{tikzpicture} [scale=0.6]
		    \coordinate (rt) at (0,-1);
		    \node[ver] (z){$#1$};
		    \node (l1v1) at (-1,1) {\tiny{$#3$}};
		    \node[ver] (l1v2) at (1,1) {$#2$};
		    \node (l2v1) at (0,2) {\tiny{$#4$}};
		    \node (l2v2) at (2,2) {\tiny{$#5$}};
		    \node at (2,0) {$#6$};
		        \draw [densely dotted] (z) edge (rt);
		        \draw [densely dotted] (l1v1) edge (z);
		        \draw [densely dotted] (l1v2) edge (z);
			\draw [densely dotted] (l2v1) edge (l1v2);
			\draw (l2v2) edge (l1v2);
		\end{tikzpicture}
	}
	\newcommand{\cubRightTreeDDDDD}[6]{
		\begin{tikzpicture} [scale=0.6]
		    \coordinate (rt) at (0,-1);
		    \node[ver] (z){$#1$};
		    \node (l1v1) at (-1,1) {\tiny{$#3$}};
		    \node[ver] (l1v2) at (1,1) {$#2$};
		    \node (l2v1) at (0,2) {\tiny{$#4$}};
		    \node (l2v2) at (2,2) {\tiny{$#5$}};
		    \node at (2,0) {$#6$};
		        \draw [densely dotted] (z) edge (rt);
		        \draw [densely dotted] (l1v1) edge (z);
		        \draw [densely dotted] (l1v2) edge (z);
			\draw [densely dotted] (l2v1) edge (l1v2);
			\draw [densely dotted] (l2v2) edge (l1v2);
		\end{tikzpicture}
	}
	\newcommand{\quartLeftTreeSDSDDDD}[8]{
		\begin{tikzpicture} [scale=0.6]
		    \coordinate (rt) at (0,-1);
		    \node [ver] (z) {$#1$};
		    \node [ver] (l1v1) at (-1,1) {$#2$};
		    \node (l1v2) at (1,1) {\tiny{$#7$}};
		    \node [ver] (l2v1) at (-2,2) {$#3$};
		    \node (l2v2) at (0,2) {\tiny{$#6$}};
		    \node (l3v1) at (-3,3) {\tiny{$#4$}};
		    \node (l3v2) at (-1,3) {\tiny{$#5$}};
		    \node at (2,0) {$#8$};
		        \draw (z) edge (rt);
		        \draw [densely dotted] (l1v1) edge (z);
		        \draw (l1v2) edge (z); 
			\draw [densely dotted] (l2v1) edge (l1v1);
			\draw [densely dotted] (l2v2) edge (l1v1);
			\draw [densely dotted] (l3v1) edge (l2v1);
			\draw [densely dotted] (l3v2) edge (l2v1);
		\end{tikzpicture}
	}
	\newcommand{\JTreeDSDS}[5]{
		\begin{tikzpicture} [scale=0.6]
		    \coordinate (rt) at (0,-1);
		    \node[ver] (z) {$#1$};
		    \node (l1v1) at (-1,1) {\tiny{$#3$}} ;
		    \node[ver] (l1v2) at (1,1) {$#2$} ;
		    \node (l2v1) at (1,2) {\tiny{$#4$}} ;
		    \node at (2,0) {$#5$};
		        \draw [densely dotted]  (z) edge (rt); 
		        \draw (l1v1) edge (z);
		        \draw [densely dotted] (l1v2) edge (z);
			\draw (l2v1) edge (l1v2);
		\end{tikzpicture}
	}
	\newcommand{\JTreeDDSD}[5]{
		\begin{tikzpicture} [scale=0.6]
		    \coordinate (rt) at (0,-1);
		    \node[ver] (z) {$#1$};
		    \node (l1v1) at (-1,1) {\tiny{$#3$}} ;
		    \node[ver] (l1v2) at (1,1) {$#2$} ;
		    \node (l2v1) at (1,2) {\tiny{$#4$}} ;
		    \node at (2,0) {$#5$};
		        \draw [densely dotted]  (z) edge (rt); 
		        \draw [densely dotted] (l1v1) edge (z);
		        \draw (l1v2) edge (z);
			\draw [densely dotted] (l2v1) edge (l1v2);
		\end{tikzpicture}
	}
	\newcommand{\highYTreeSDDS}[5]{
		\begin{tikzpicture}[scale=0.6]
		    \coordinate (rt) at (0,-1);
		    \node[ver] (z) {$#1$};
		    \node[ver] (l1v1) at (0,1) {$#2$} ;
		    \node (l2v1) at (-1,2) {\tiny{$#3$}} ;
		    \node (l2v2) at (1,2) {\tiny{$#4$}} ;
		    \node at (2,0) {$#5$};
		        \draw (z) edge (rt); 
		        \draw [densely dotted] (l1v1) edge (z);
			\draw [densely dotted] (l2v1) edge (l1v1);
			\draw (l2v2) edge (l1v1);
		\end{tikzpicture}
	}
	\newcommand{\highYTreeSDSD}[5]{
		\begin{tikzpicture}[scale=0.6]
		    \coordinate (rt) at (0,-1);
		    \node[ver] (z) {$#1$};
		    \node[ver] (l1v1) at (0,1) {$#2$} ;
		    \node (l2v1) at (-1,2) {\tiny{$#3$}} ;
		    \node (l2v2) at (1,2) {\tiny{$#4$}} ;
		    \node at (2,0) {$#5$};
		        \draw (z) edge (rt); 
		        \draw [densely dotted] (l1v1) edge (z);
			\draw (l2v1) edge (l1v1);
			\draw [densely dotted] (l2v2) edge (l1v1);
		\end{tikzpicture}
	}
	\newcommand{\highYTreeDSSS}[5]{
		\begin{tikzpicture}[scale=0.6]
		    \coordinate (rt) at (0,-1);
		    \node[ver] (z) {$#1$};
		    \node[ver] (l1v1) at (0,1) {$#2$} ;
		    \node (l2v1) at (-1,2) {\tiny{$#3$}} ;
		    \node (l2v2) at (1,2) {\tiny{$#4$}} ;
		    \node at (2,0) {$#5$};
		        \draw [densely dotted] (z) edge (rt); 
			\draw (l2v1) edge (l1v1);
			\draw (l2v2) edge (l1v1);
		        \draw (l1v1) edge (z);
		\end{tikzpicture}
	}
\numberwithin{equation}{section}
\begin{document}

\title{Gr\"obner bases for coloured operads}

\author{Vladislav Kharitonov}
\email{vakharitonov\_2@edu.hse.ru}

\author{Anton Khoroshkin}
\address{	%	International Laboratory of Representation Theory and Mathematical Physics, 
	National Research University Higher School of Economics, 
	20 Myasnitskaya street, Moscow 101000, Russia  \& 	
	Institute for Theoretical and Experimental Physics,  25 Bolshaya Cheremushkinskaya, Moscow 117259, Russia; 	
}
\email{akhoroshkin@hse.ru}

\begin{abstract} 
	In this work we provide a definition of a coloured operad as a monoid in some monoidal category, and develop the machinery of Gr\"obner bases for coloured operads.
	Among the examples for which we show the existance of a quadratic Gr\"obner basis we consider the seminal Lie-Rinehart operad whose algebras are pairs (functions, vector fields).
\end{abstract}
\maketitle

%\begin{abstract} 
%	In this work we provide a definition of a coloured operad as a monoid in some monoidal category, and develop the machinery of Gr\"obner bases for coloured operads.
%	Among the examples for which we show the existance of a quadratic Gr\"obner basis we consider the seminal Lie-Rinehart operad whose algebras are pairs (functions, vector fields).
% \end{abstract}
%
%\makeatletter
%\@setabstract
%\makeatother

\tableofcontents

\section*{Introduction}

 Gr\"obner bases and the related concepts proved to be an extremely powerful tool for exploring different properties of a wide range of algebraic objects. The list of objects for which this machinery has been developed includes Lie algebras~\cite{Shirshov}, commutative algebras~\cite{Buch}, associative algebras~\cite{Bergman, Bokut},  symmetric operads~\cite{KH} and nonsymmetric operads~\cite{DotsenkoVallette}. We are going to extend the Gr\"obner bases machinery discovered in~\cite{KH} to the case of coloured operads. The special case of coloured operads on $2$ colours called $1\ttt2$-coloured operads was already worked out in~\cite{KHor_Univ_envlop}, however, the general case has additional complexity thanks to the action of symmetric group. 
 
The notion of a coloured operad generalizes the notion of a classical operad, allowing operations to handle objects of different nature. Usually coloured operads are defined either through the type of algebras they give rise to, or in purely combinatorial terms (as in the book by Yau~\cite{Yau}). However, neither of these approaches provides the notions required to define a Gr\"obner basis for an operad. 

The key ingredient for defining a Gr\"obner basis for a type of algebraic objects is an ordering of the monomial basis of the free object compatible with the algebraic structure.
As in the case with classical symmetric operads, the symmetric coloured operads do not admit any ordering compatible with operadic compositions, so we cannot hope to develop the desired notions directly.

We start with the definition of a coloured operad introduced by van~der~Laan in~\cite{vanderLaan} and then, in the spirit of \cite{KH}, we introduce the notion of a \emph{shuffle coloured operad}. The free shuffle coloured operads has the canonical monomial basis which admits necessary orderings.
There exists a forgetful functor from the symmetric coloured operads to the shuffle coloured operads, which allows us to transfer the acquired information back to the symmetric operad.

The approach discovered in~\cite{KH} proved to be fruitful in the case of the classical operads, providing tools for concrete computations (see the book by Bremner and Dotsenko~\cite{BremnerDotsenko}) and enabling the algorithmic realisation --- a \texttt{Haskell} package \texttt{Operads}~\cite{HaskellOperads}.

Section~\ref{secExamples} is devoted to the description of a quadratic Gr\"obner basis in several natural operads on $2$ colours:
\begin{itemize}
	\item[\S\ref{sec::Icom}]  the operad $\ICom$ governing a pair-- a commutative associative algebra and an ideal in it;
	\item[\S\ref{sec::AffHS}] the operad $\AffHS$ of affine homogeneous spaces discovered by Merkulov in~\cite{Merkulov};
    \item[\S\ref{sec::SC}] The $0$-th cohomology of the Swiss Cheese operad and its Koszul dual operad of Leibniz pairs.
    \item[\S\ref{subSecLR}] The Lie-Rinehart operad and the operad $\DerCom$ governing pairs: a commutative algebra and a Lie algebra of its derivations.
\end{itemize}
As a by-product we (re)prove that all aforementioned operads are Koszul and we compute the corresponding generating series of dimensions of operations. Moreover, the structure of the symmetric group actions is also clear in all cases we consider.

All statements regarding reducibility of certain $S$-polynomials in this work result from computations performed on a computing with a Python script written for the purposes of this paper.
We provide a sample of the script's output for the operad $\LieR$ in the appendix and suggest different 
extra arguments showing the reducibility of $S$-polynomials for other examples.

\section*{Acknowledgement}
We would like to thank V.\,Dotsenko for useful comments on the first draft of the text.
The research of A.Kh. was carried out within the HSE University Basic Research Program
and funded (jointly) by the Russian Academic Excellence Project '5-100'. 
The results of  Section~\S\ref{secGBases}  have been obtained under support of the RSF grant No.19-11-00275. % A.Okunkov RNF

\section{Notation and main definitions}

We employ the definition of a coloured operad introduced by van~der~Laan in \cite{vanderLaan}, rather than the more recent definitions presented in the book by Yau \cite{Yau}, for the former has the merit of being a functorial one.
\subsection{Notation}
\mbox{}

$\Bbbk$ --- a field of characteristic 0. 

$\Vect$ --- the category of finite-dimensional vector spaces over $\Bbbk$.

$\Fin$ --- the category of finite sets with surjections as morphisms.

$\Ord$ --- the category of finite ordered sets with order-preserving surjections as morphisms.

$\mathbf{n}$ --- the set $\{1, \ldots, n\}$.

$\Sg_n$ --- symmetric group over a set of $n$ elements.

\subsection{Coloured sets}
Fix a finite set $I$, called the \emph{colouring set}. An $I$-\emph{coloured set} (or $I$-\emph{set}) $S$  is a finite set $S$ endowed with a map of sets $\chi:S \to I$ called the \emph{colouring of} $S$. 

Note that for a coloured set $S$, $\Sg_{|S|}$ acts on $S$ by precomposing the colouring of $S$ with a given permutation $\sigma$. That is $\sigma: (S,\chi) \mapsto (S,\chi\circ\sigma)$.

We denote by $\Fin_I$ the category of $I$-sets with surjections of underlying sets as morphisms, and by $\Ord_I$ -- the category of ordered $I$-sets with order-preserving surjections of underlying sets as morphisms.

Denote by $\const_c: S \to I$ the \emph{constant colouring of $S$ with the colour $c$}, that is a colouring with $\const_c(s) = c \quad \forall s\in S$.

Given a colouring $\chi_1$ of the set $\mathbf{n}$ and a colouring $\chi_2$ of the set $\mathbf{m}$, define a colouring $\chi_1 \circ_l \chi_2$ of the set $\mathbf{n+m-1}$ for any $l \le n$ as follows:

\[
	\chi_1 \circ_l \chi_2(k) =
	\begin{cases}
		\chi_1(k) & \text{ if } k<l; \\
		\chi_2(k-l+1) & \text{ if } l \le k < l+m; \\
		\chi_1(k-m) & \text{ if } k \ge l+m.
	\end{cases}
\]

Given a colouring set $I = \{c_1, \ldots, c_d\}$ and a weight vector $\boldsymbol{m} = (m_1, \ldots, m_d)$, we define \emph{the standard colouring} for $\boldsymbol{m}$ to be the colouring of the set with cardinality $\sum m_i$, which assigns the first colour to the first $m_1$ elements of the set, the second colour to the next $m_2$ elements of the set and so on. We denote this colouring by $\st_{\boldsymbol{m}}$.

\subsection{Classical definition of a coloured operad }
An \emph{$I$-coloured collection} $\cP$ is a collection of sets $\cP(n, \chi, c)$ indexed by all $n>0$, all colourings $\chi$ of $\mathbf{n}$, and all colours $c \in I$, endowed with a right $\Sg_n$-action on $\bigoplus_{\chi} \cP(n, \chi, c)$ such that $\sigma:\cP(n, \chi, c) \mapsto \cP(n, \chi\sigma, c)$ for any $\sigma \in \Sg_n$.

The colours $\chi(1), \ldots, \chi(n)$ are called the \emph{input colours of $\cP(n, \chi, c)$}, and the colour $c$ is called the \emph{output colour of $\cP(n, \chi, c)$}. 

\begin{definition}
	\label{classical def} 
	A \textbf{\emph{coloured operad}} is an $I$-coloured collection $\cP$ endowed with a set of morphisms called \emph{partial compositions}:
	\[
	\circ_l: \cP(n, \chi_1, c) \otimes \cP(m, \chi_2, \chi_1(l)) \longrightarrow \cP(n+m-1, \chi_1\circ_l \chi_2, c) \quad \text{for all} \quad l\le n,
	\]
	and a set of \emph{identity elements} $\id_c \in \cP(1,\const_c,c)$, satisfying the following conditions:
	\begin{itemize}
	    \item \emph{\textbf{Sequential composition axiom}}: 
	    \[
	    	(\lambda \circ_t \mu) \circ_{t-1+r} \nu = \lambda \circ_t (\mu \circ_r \nu),
	    \]
	    for all $t \le l, \: r\le m$ and $\lambda \in \cP(l, \chi_1, c), \: \mu \in \cP(m,\chi_2,\chi_1(t)), \: \nu \in \cP(n,\chi_3,\chi_2(r))$.
	    \item \emph{\textbf{Parallel composition axiom}}: 
	    $$
	    (\lambda \circ_r \mu) \circ_{s-1+m} = (\lambda \circ_s \nu) \circ_r
	    \mu
	    $$
	    for all $r<s\le l$ and $\lambda \in \cP(l, \chi_1, c), \: \mu \in \cP(m, \chi_2, \chi_1(r)), \: \nu \in \cP(n, \chi_3, \chi_1(s))$.
	    \item \emph{\textbf{Identity axiom}}:
	    \begin{align*}
		    \id_c \circ_1 \nu = \nu, \\
		    \mu \circ_s \id_{\chi_2(s)} = \mu
	    \end{align*}
	    
	    for all $s \le m$ and $\nu \in \cP(n,\chi_1,c), \: \mu \in \cP(m,\chi_2,d)$.
	\end{itemize}
\end{definition}

\begin{remark}
It is common to define a coloured operad of $k$ colours by specifying the sets $\cP(m_1,\ldots,m_k, c)$ of operations with $m_i$ arguments of $i^{th}$ colour and the output colour $c$, and partial compositions of these operations. Also one specifies the symmetries these operations have, so $\cP(m_1,\ldots,m_k)$ is a $\Sg_{m_1}\times \cdots \times \Sg_{m_k}$-module. 

To refactor this definition into the definition of the above form, let $n$ be equal to the sum of $m_i$. Set $\cP(n, \st_{\boldsymbol{m}}, c) := \cP(m_1,\ldots,m_k, c)$, and the $\Sigma_n$-representation $\bigoplus_{\chi}\cP(n,\chi,c)$ is isomorphic to the induced representation from the $\Sigma_{m_1}\times\dots\times\Sigma_{m_k}$-representation $\cP(m_1,\ldots,m_k, c)$.
In particular, the $I$-coloured collection $\cP$ is uniquely defined by the $\Sigma^{I}$-collection $\cup_{m_i,c}\cP(m_1,\ldots,m_k, c)$.
\end{remark}

\subsection{Functorial definition of a coloured operad}

Recall that $(\Vect, \otimes, \Bbbk)$, $(\Fin, \sqcup, \emptyset)$, and $(\Ord, \oplus, \emptyset)$ are monoidal categories, where $\oplus$ denotes  the ordered sum of sets. It is clear that $\Fin_I$ and $\Ord_I$ are also monoidal categories.

\begin{definition}
\mbox{}
	\begin{enumerate}
		    \item A \emph{\textbf{nonsymmetric coloured collection}} $\cP$ is a monoidal contravariant functor from the category $\Ord_I$ to the category  $\Vect$.
		    \item A \emph{\textbf{symmetric coloured collection}} $\cP$ is a monoidal contravariant functor from the category $\Fin_I$ to the category  $\Vect$.
	\end{enumerate}
\end{definition}

\begin{remark}
\mbox{}
	\begin{enumerate}
		\item The coloured sets $(\mathbf{n}, \chi)$ and their morphisms form a skeleton of both categories $\Ord_I$ and $\Fin_I$, so a collection $\cP$ is completely determined by its values on all morphisms of the form $(\mathbf{m},\chi_2) \twoheadrightarrow (\mathbf{n},\chi_1)$.
		\item The coherence condition for a coloured collection $\cP$ reads that:
		    \[
		    \cP ((\mathbf{m},\chi_2) \twoheadrightarrow (\mathbf{n},\chi_1)) = \bigotimes_{s \in \mathbf{n}} \cP ((f^{-1}(s),\chi_2|_{f^{-1}(s)}) \twoheadrightarrow (s,\const_{\chi_1(s)})).
		    \]
		    Note that there is exactly one arrow from a coloured set $(\mathbf{n},\chi_1)$  to the coloured set $(\mathbf{1},\const_c)$. The image of this arrow under $\cP$ is the space $\cP(n,\chi_1,c)$ from the classical definition.
	\end{enumerate}
\end{remark}

Now we proceed to define the operadic compositions, which in this setting amount to the monoidal structure on collections. 

\begin{definition}
\begin{enumerate}
	\item Let $\cP$ and $\cQ$ be two nonsymmetric coloured collections. Define their \emph{nonsymmetric composition} by the formula
	    \[
	    (\cP \circ \cQ)(n,\chi_1,c) := \bigoplus_{(\mathbf{m},\chi_2)} \cP(m,\chi_2,c) \otimes \left[ \bigoplus_{f: \mathbf{n} \twoheadrightarrow \mathbf{m}} \cQ(f^{-1}(1),\chi_1,\chi_2(1))\otimes \ldots \otimes\cQ(f^{-1}(m),\chi_1,\chi_2(m)) \right],
	    \]
	where the inner sum is taken over all non-decreasing surjections $f$.
	
	\item Let $\cP$ and $\cQ$ be two nonsymmetric coloured collections. Define their \emph{shuffle composition} by the formula
	    $$
	(\cP \circ_{sh} \cQ)(n,\chi_1,c) := \bigoplus_{(\mathbf{m},\chi_2)} \cP(m,\chi_2,c) \otimes \left[ \bigoplus_{f: \mathbf{n} \twoheadrightarrow \mathbf{m}} \cQ(f^{-1}(1),\chi_1,\chi_2(1))\otimes \ldots \otimes\cQ(f^{-1}(m),\chi_1,\chi_2(m)) \right],
	    $$
	where the inner sum is taken over all shuffling surjections $f$, that is surjections for which $\mathsf{min}f^{-1}(i) < \mathsf{min}f^{-1}(j)$ whenever $i<j$
	    
	\item Let $\cP$ and $\cQ$ be two symmetric coloured collections. Define their \emph{symmetric composition} by the formula
	    $$
	(\cP \circ \cQ)(n,\chi_1,c) := \bigoplus_{(\mathbf{m},\chi_2)} \cP(m,\chi_2,c) \otimes_{\Bbbk\Sg_m} \left[ \bigoplus_{f: \mathbf{n} \twoheadrightarrow \mathbf{m}} \cQ(f^{-1}(1),\chi_1,\chi_2(1))\otimes \ldots \otimes\cQ(f^{-1}(m),\chi_1,\chi_2(m)) \right],
	    $$
	where the inner sum is taken over all surjections $f$.
	
	\noindent In all the above formulae one restricts $\chi_1$ to the respective set if necessary.
	    
	\item Define the functor $\cI$ as follows:
	    $$
	    \cI(n,\chi_1,c) = 
	    \begin{cases}
	    \Bbbk \quad & \text{if} \quad n=1 \quad \text{and} \quad \chi_1(1) = c;\\
	    0 \quad & \text{otherwise.}
	    \end{cases}
	    $$
\end{enumerate}
\end{definition}

\begin{remark}
In the definition of symmetric composition the action of $\Sg_m$ on the RHS if defined by permuting the underlying coloured set for the left factor of the tensor product and by

\begin{tikzcd} \bigoplus \limits_{f: \mathbf{n} \twoheadrightarrow \mathbf{m}} \cQ(f^{-1}(1),\chi_1,\chi_2(1))\otimes \ldots \otimes \cQ(f^{-1}(m),\chi_1,\chi_2(m))
\arrow[d,mapsto]\\
\bigoplus \limits_{\sigma f: \mathbf{n} \twoheadrightarrow \mathbf{m}} \cQ(f^{-1}(\sigma^{-1}(1)),\sigma^{-1}\chi_1,\chi_2(\sigma^{-1}(1))\otimes \ldots \otimes \cQ(f^{-1}(\sigma^{-1}(1)),\sigma^{-1}\chi_1,\chi_2(\sigma^{-1}(m))
\end{tikzcd}

on the right factor, thus ensuring that the colouring of outputs matches the colouring of inputs.
\end{remark}

It is straightforward to check that:
\begin{pr}
Each of the compositions defined above, together with the functor $\cI$, endows the underlying category with a structure of a strict monoidal category.
\end{pr}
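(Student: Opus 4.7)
The plan is to check in each of the three cases the defining axioms of a strict monoidal category, namely bifunctoriality of the composition, the left and right unit laws with respect to $\cI$, and strict associativity. Bifunctoriality is essentially automatic, since each composition is built from tensor products and direct sums indexed by surjections of coloured sets, and morphisms of collections are natural in the coloured set variable; so I would verify it by reading off the definition on morphisms. Since strict monoidal categories have identity associator and unitors, no coherence diagrams need to be checked once strict equalities are established.

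For the unit laws I would use the explicit form of $\cI$. In $(\cI \circ \cP)(n,\chi_1,c)$ the outer sum over $(\mathbf{m},\chi_2)$ contributes only when $m=1$ and $\chi_2=\const_c$, in which case the only surjection $\mathbf{n}\twoheadrightarrow\mathbf{1}$ is the unique map and the sum collapses to $\Bbbk\otimes\cP(n,\chi_1,c)$. For $(\cP\circ\cI)(n,\chi_1,c)$, each factor $\cI(f^{-1}(i),\chi_1|_{f^{-1}(i)},\chi_2(i))$ forces $|f^{-1}(i)|=1$ for all $i$, so $f$ must be a bijection; in the nonsymmetric and shuffle cases only the identity bijection is of the required type, and in the symmetric case the $\Sg_m$-quotient identifies all bijections with the identity. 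Either way one recovers $\cP(n,\chi_1,c)$.

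The substantive step is associativity. For any $\cP,\cQ,\cR$, both $(\cP\circ\cQ)\circ\cR$ and $\cP\circ(\cQ\circ\cR)$ evaluated on $(n,\chi_1,c)$ expand as a sum indexed by composable pairs of surjections $\mathbf{n}\twoheadrightarrow\mathbf{k}\twoheadrightarrow\mathbf{m}$, with $\cP$ placed on $(\mathbf{m},\chi_3)$, factors of $\cQ$ on the fibres of $\mathbf{k}\to\mathbf{m}$ and factors of $\cR$ on the fibres of $\mathbf{n}\to\mathbf{k}$. I would verify the equality by the combinatorial observation that every surjection of the appropriate type $\mathbf{n}\twoheadrightarrow\mathbf{m}$ factors uniquely as $\mathbf{n}\twoheadrightarrow\mathbf{k}\twoheadrightarrow\mathbf{m}$ for each chosen partition of $\mathbf{n}$ refining the fibres of $\mathbf{n}\to\mathbf{m}$, together with the fact that the restriction to each fibre of a non-decreasing (resp.\ shuffling) surjection is again non-decreasing (resp.\ shuffling) after order-isomorphism. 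These combinatorial statements handle the nonsymmetric and shuffle cases on the nose.

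The main obstacle is the symmetric case, where the outer tensor products are taken over $\Bbbk\Sg_m$ and $\Bbbk\Sg_k$, and the two bracketings distribute these quotients differently. I would handle this by first writing both sides without the quotients as sums over all pairs of set maps, then observing that the $\Sg_k$-action used in $\cP\circ(\cQ\circ\cR)$ permutes the inner fibre factors of $\cQ\circ\cR$ while the $\Sg_k$-action used in $(\cP\circ\cQ)\circ\cR$ permutes the middle composable surjection; these two actions are intertwined by the bijection factoring surjections through $\mathbf{k}$, so the quotients coincide. Once this $\Sg_k$-compatibility is established, the same argument as in the nonsymmetric case closes the proof.
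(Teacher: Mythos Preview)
The paper gives no proof of this proposition at all: it is introduced with the sentence ``It is straightforward to check that'' and left at that. Your outline is therefore considerably more detailed than the paper's treatment, and it is the standard verification; nothing you wrote is wrong in spirit.

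One point worth sharpening. For associativity in the shuffle case you invoke the fact that the restriction of a shuffling surjection to a fibre is again shuffling after order-isomorphism. That gives one direction of the required bijection, but not the other. What you actually need is the bijection between composable pairs of shuffling surjections $\mathbf{n}\twoheadrightarrow\mathbf{k}\twoheadrightarrow\mathbf{m}$ and pairs $(f,(g_i)_{i\le m})$ consisting of a shuffling $f:\mathbf{n}\twoheadrightarrow\mathbf{m}$ together with shuffling surjections $g_i$ out of each fibre $f^{-1}(i)$. The inverse direction is not just ``assemble the $g_i$ into a map to $\mathbf{k_1}\oplus\cdots\oplus\mathbf{k_m}$'': that map need not be shuffling. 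Rather, there is a \emph{unique} ordering on the disjoint union of the $\mathbf{k_i}$ (equivalently, a unique shuffling $h:\mathbf{k}\to\mathbf{m}$ with the prescribed fibre sizes) such that the assembled $g$ is shuffling, and this is where the combinatorics actually happens. This is precisely the lemma underlying shuffle composition in the uncoloured case (Dotsenko--Khoroshkin), and it carries over verbatim since colours play no role in the shuffling condition; but your phrasing ``factors uniquely \ldots\ for each chosen partition'' obscures this and could be read as claiming something false.
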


\begin{definition}
\begin{enumerate}
    \item A \emph{nonsymmetric coloured operad} is a monoid in the category of nonsymmetric coloured collections with the monoidal structure given by the nonsymmetric composition. We denote the category of nonsymmetric coloured operads by $\NSymOp_I$.
    \item A \emph{shuffle coloured operad} is a monoid in the category of nonsymmetric coloured collections with the monoidal structure given by the shuffle composition. We denote the category of shuffle coloured operads by $\ShfOp_I$.
    \item A \emph{symmetric coloured operad} is a monoid in the category of symmetric coloured collections with the monoidal structure given by the symmetric composition. We denote the category of symmetric coloured operads by $\SymOp_I$.
\end{enumerate}
\end{definition}

\begin{remark}
Given an operad $\cP$ thus defined, one can retrieve the coloured operad structure on $\cP$ in the sense of the definition \ref{classical def}. Firstly, by the merit of the unit morphism $\cI \to \cP$ one obtains the identity elements $\mathbf{id}_c$. Then the partial composition 
\[
\circ_l: \cP(n, \chi_1, c) \otimes \cP(m, \chi_2, \chi_1(l)) \longrightarrow \cP(n+m-1, \chi_1\circ_l \chi_2, c)
\]
is the component of $\cP \circ \cP$ lying in $$\cP(n, \chi_1, c)\otimes \left[ \bigoplus \cP(1, \chi_1 |_1, \chi_1(1)) \otimes \ldots \otimes \cP(m, \chi_2, \chi_1(l)) \otimes \ldots \otimes\cP(1, \chi_1 |_n, \chi_1(n))\right].$$
\end{remark}

\begin{pr}
Two definitions of a coloured operad are equivalent.
\end{pr}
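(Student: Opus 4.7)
The plan is to build functors in both directions between the two notions and check they are mutually inverse. For the passage from the functorial to the classical definition, the identity elements $\id_c$ come from the unit $\eta: \cI \to \cP$, and the partial composition $\circ_l$ is obtained by restricting the monoid multiplication $\gamma: \cP \circ \cP \to \cP$ to the summand indexed by the surjection $f_l: \mathbf{n+m-1} \twoheadrightarrow \mathbf{n}$ that collapses $\{l, l+1, \ldots, l+m-1\}$ to $l$ and is the identity elsewhere, as sketched in the preceding remark. For the reverse passage, given a classical coloured operad I would define $\gamma$ on each summand of $\cP \circ \cP$ indexed by a surjection $f: \mathbf{n} \twoheadrightarrow \mathbf{m}$ as an iterated partial composition built block-by-block along $f$; the parallel composition axiom is precisely what ensures that distinct orders of insertion produce the same element, so $\gamma$ is well defined.

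Once the constructions are in place, I would verify that the unit axioms for $\eta$ translate into the identity axioms $\id_c \circ_1 \nu = \nu$ and $\mu \circ_s \id_{\chi_2(s)} = \mu$, while associativity of $\gamma$ decomposes into two cases depending on whether the two compositions occur at nested or disjoint positions, yielding respectively the sequential and the parallel composition axioms of Definition~\ref{classical def}. In the symmetric case one must additionally check that the tensor product over $\Bbbk\Sg_m$ appearing in the definition of the symmetric composition is compatible with the $\Sg$-equivariance of the partial compositions encoded in the $I$-coloured collection structure; this follows from the fact that the $\Sg_n$-action on $\bigoplus_{\chi} \cP(n,\chi,c)$ by precomposition of $\chi$ is exactly the one induced by functoriality on the skeleton of $\Fin_I$.

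The main obstacle I anticipate is the combinatorial bookkeeping required to show that the two constructions are mutually inverse: on one side, the iterated partial composition along a surjection must be shown to recover $\gamma$ restricted to the corresponding summand, and on the other side, the partial composition $\circ_l$ extracted from $\gamma$ via the surjection $f_l$ must agree with the originally given $\circ_l$. Both checks reduce to unravelling the coherence identity $\cP((\mathbf{m},\chi_2) \twoheadrightarrow (\mathbf{n},\chi_1)) = \bigotimes_{s\in\mathbf{n}} \cP((f^{-1}(s),\chi_2|_{f^{-1}(s)}) \twoheadrightarrow (s,\const_{\chi_1(s)}))$ from the preceding remark, combined with the observation that any surjection $f$ factors as a composite of $f_l$-type surjections in a manner controlled exactly by the parallel composition axiom, so that no ambiguity arises from the choice of factorisation.
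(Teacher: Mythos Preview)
Your proposal is correct and follows essentially the same approach as the paper, which simply defers to Proposition~5.3.4 of \cite{Loday} and says to repeat that argument verbatim while tracking the input-output colouring compatibility. What you have written is precisely a sketch of that Loday--Vallette argument, so there is nothing to add beyond noting that the paper does not spell out the details you have supplied.
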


\begin{proof}
To prove this proposition one may repeat the proof of \emph{Proposition 5.3.4} in \cite{Loday} verbatim, keeping track of input-output colouring compatibility.
\end{proof}

\begin{remark}
The combinatorial constructions presented further in this work stem from the third way of describing an operad, that is regarding an operad as an algebra over a monad of rooted trees. We do not provide this description here, limiting ourself to the aspects required for computation. This approach is explored more thoroughly in \cite{Loday} for the uncoloured case, and the coloured case is the same up to substituting coloured rooted trees for uncoloured rooted trees.
\end{remark}

\subsection{Free Coloured Operads}
The first step on the path to the Gr\"obner bases is the notion of a free object. The free coloured operad and an operadic ideal in it are defined analogously to the respective notions for uncoloured operads. Here we provide definitions in the framework of the classical definition \ref{classical def}, and in the next section we will introduce a combinatorial realisation of these notions.

\begin{definition}
The \emph{free $I$-coloured operad} $\cF(\Upsilon)$ generated by the $\Sigma^I$-collection of operations $\Upsilon:=\cup\Upsilon(m_1,\ldots,m_d;c)$ 
is the result of applying all possible operadic compositions to all pairs of elements from the induced $I$-coloured collection of $\Sigma_n$ representations $\oplus_{\chi}\Upsilon(n,\chi,c)$.

An operadic ideal in the free $I$-coloured operad $\cF$ is the result of repetitively applying all possible operadic composition to all pairs of the form $(\al, \be)$ and $(\be, \al)$, where $\be$ is already in the ideal and $\al$ is an arbitrary element of $\cF$.
\end{definition}
\begin{remark}
The  notation $\cF(a_1,\ldots,a_k| b_1,\ldots, b_m)$ of a presentation of an operad by  a given  set of operations $\{a_1, \ldots, a_k\}$ and relations $b_j$'s with known symmetries and the $I$-colouring of inputs/outputs has the following meaning.
First, with each generator $a_s\in \cF(m_1,\ldots,m_k,c)$ one has to assign a linear basis of the induced representation $\Bbbk[\Sigma_{m_1+\ldots+m_k}] a_s$ and similarly one has to choose the basis of the representations of symmetric group generated by defining relations $b_j$'s.

In particular the quantity of generators and relations is much more comparing to the symmetric case.
For example, with a generator $a\in  \cF(m_1,\ldots,m_k,c)$ which is symmetric in each colour one has to assign $\binom{(m_1+\ldots+m_k)!}{m_1! \ldots m_k!  }$ different generators that correspond to different colourings $\chi:\{1,\ldots,\sum m_i\} \rightarrow I$ with $|\chi^{-1}(j)| = m_j$.
\end{remark}

\begin{definition}
	An operadic ideal generated by the set $B = \{b_1, \ldots, b_k\}$  in a free coloured operad $\cF$ is the result of repetitively applying all possible operadic composition to all pairs of the form $(\al, \be)$ and $(\be, \al)$, where $\be$ is already in the ideal and $\al$ is an arbitrary element of $\cF$.
\end{definition}

Suppose we have a free coloured operad $\cF$ with the set of generating operations $G$. Consider a free (uncoloured) operad $\cF_{\unc}$ generated by the same set of operations $G$ disregarding the matching of the colours rule. Then the set of possible compositions of  $\cF_{\unc}$ includes the set of possible compositions of $\cF$, and the resulting operations are the same. This observation yields the following 

\begin{pr}
\label{inclusion into uncoloured}
A free coloured operad $\cF$ (as a monoid, disregarding the colour grading) is the factor of the corresponding uncoloured operad  $\cF_{\unc}$ by the operadic ideal generated by all colour-matching relations. In particular, we have an inclusion of monoids $\cF \hookrightarrow \cF_{\unc}$.
\end{pr}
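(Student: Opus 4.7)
The plan is to pass to the combinatorial description of free operads as spaces spanned by decorated rooted trees, as alluded to in the preceding remark. A basis of $\cF_{\unc}$ consists of all rooted trees whose internal vertices are labelled by the generating operations (arity matching the number of children) and whose leaves are labelled by $1,\ldots,n$; operadic composition is grafting at leaves. The same set of trees indexes a basis of $\cF$, subject to the extra constraint that colours agree along every internal edge, i.e.\ the output colour of each child vertex equals the required input colour of its parent at the corresponding slot -- call such trees \emph{good}, and the remaining ones \emph{bad}. As a graded vector space, $\cF_{\unc}$ then splits as the span of good trees (canonically identified with $\cF$) plus the span of bad trees.

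The substance of the claim lies in showing that the span of bad trees coincides with the operadic ideal $\cJ \subset \cF_{\unc}$ generated by the colour-mismatching 2-vertex compositions $\al \circ_j \be$ -- those for which the output colour of $\be$ differs from the $j$-th input colour of $\al$. The inclusion of $\cJ$ into the span of bad trees is immediate: each such generator is itself a bad tree, and grafting anything above or below a bad tree preserves the offending mismatched edge. For the reverse inclusion, take any bad tree $T$ with a mismatched edge $e$ joining a parent labelled $\al$ to a child labelled $\be$ in slot $j$. Cleaving $T$ along $e$ exhibits it as an iterated operadic composition of $\al \circ_j \be$ with the subtree of $T$ above $\al$ (grafted above the root of $\al \circ_j \be$) and the subtrees attached at the remaining leaves of $\al$ and below $\be$ (grafted at the leaves of $\al \circ_j \be$). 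This realises $T$ as an element of the ideal generated by $\al \circ_j \be$, completing the identification of $\cJ$ with the span of bad trees.

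The main -- and essentially only -- obstacle is the grafting argument above: one must express $T$ using only the allowed partial compositions and verify the bookkeeping. This is handled by induction on the number of vertices of $T$, peeling off one child-subtree at a time, and is entirely formal given the monadic tree description of free operads. Once $\cJ$ is identified with the span of bad trees, the quotient $\cF_{\unc}/\cJ$ manifestly has the good trees as its basis, and the induced composition sends a pair of good trees to their graft if the colours match at the grafting leaf and to zero otherwise, exactly matching the partial composition of $\cF$. The desired inclusion $\cF \hookrightarrow \cF_{\unc}$ is then the canonical linear section of the quotient map sending each good tree to itself; it preserves every composition that is defined in $\cF$, which is the content of the ``inclusion of monoids''.
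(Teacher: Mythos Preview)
Your argument is correct and considerably more detailed than what the paper actually provides. In the paper the proposition is stated immediately after the sentence ``Then the set of possible compositions of $\cF_{\unc}$ includes the set of possible compositions of $\cF$, and the resulting operations are the same. This observation yields the following'' --- and no further proof is given; the authors treat the claim as evident from that observation.

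Your route via the tree basis makes explicit what the paper leaves implicit: the identification of the ideal $\cJ$ with the span of colour-mismatched (``bad'') trees, and the recognition that the ``inclusion'' is a linear section of the quotient map which preserves exactly those compositions defined in $\cF$. This last point is worth having spelled out, since a quotient map and a monoid inclusion point in opposite directions, and the paper's phrasing ``inclusion of monoids'' is loose --- you correctly interpret it as compatibility with all compositions that exist on the coloured side. The only place one might ask for a touch more care is the cleaving argument: when you write $T$ as an iterated composition around $\alpha\circ_j\beta$, the pieces above and below need not themselves be good trees, but that is irrelevant since the ideal absorbs arbitrary elements of $\cF_{\unc}$, and you implicitly use this.
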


In practice it is convenient to have an explicit combinatorial description for operads. To obtain such a description, we regard an operad as an algebra over the monad of rooted trees. We provide all combinatorial definitions needed for our purposes in the next section \S\ref{secComb}. For greater detail on this we refer to \cite{Loday}\S 5.6.

\subsection{Generating series}

	Suppose that the cardinality of the set of colours is equal to $d$, so we say $I=\{c_1,\ldots,c_d\}$. With each $I$-coloured symmetric collection $\cP$ we assign a collection of $d$ formal power series:

	\begin{multline}
	F^{i}_{\cP}(t_1,\ldots,t_d):=\sum_{m_1,\ldots,m_d\geq 0}\frac{\dim \cP(m_1+\ldots+m_d, \chi , c_i )}{m_1 ! \ldots m_d!} t_1^{m_1}\ldots t_d^{m_d},
	\\
	 \text{ with } |\chi^{-1}(c_j)| = m_j \text{ for } j=1,\ldots,d 
	\end{multline}

	The vector $F^{I}_\cP(t_1,\ldots,t_d):= (F^1_{\cP},\ldots, F^{d}_{\cP})$ is called the generating series of the $I$-coloured symmetric collection $\cP$.
	
\begin{pr}
The composition of generating series of $I$-coloured collections equals the generating series of the composition of collections: $F^{I}_{\cP\circ \cQ} = F^{I}_{\cP}\circ F^{I}_{\cQ}$
\end{pr}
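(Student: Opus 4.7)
The plan is to fix an output colour $c_i$ and a multi-index $\mathbf{m}=(m_1,\ldots,m_d)$ and compare the coefficient of $t_1^{m_1}\cdots t_d^{m_d}/(m_1!\cdots m_d!)$ on both sides. On the left this coefficient equals $\dim(\cP\circ\cQ)(n,\st_{\mathbf{m}},c_i)$ with $n=\sum_j m_j$, since the $\Sg_n$-action on colourings makes the dimension depend only on the colour-multiplicity vector. On the right, by definition of the composition of generating series, we must expand the substitution $F^i_{\cP}(F^1_{\cQ},\ldots,F^d_{\cQ})$.

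First I would expand the right-hand side. Writing $u_j=F^j_{\cQ}(t_1,\ldots,t_d)$, the Taylor coefficient of $F^i_{\cP}$ at multi-index $\mathbf{k}=(k_1,\ldots,k_d)$ contributes $\dim\cP(m,\st_{\mathbf{k}},c_i)/(k_1!\cdots k_d!)$ with $m=\sum_j k_j$, while the monomial $u_1^{k_1}\cdots u_d^{k_d}$ expands by the multinomial theorem into a sum over ordered tuples $(\mathbf{l}^{(1)},\ldots,\mathbf{l}^{(m)})$ of multi-indices with $\sum_r \mathbf{l}^{(r)}=\mathbf{m}$, the $r$-th factor contributing $\dim\cQ(|\mathbf{l}^{(r)}|,\st_{\mathbf{l}^{(r)}},\st_{\mathbf{k}}(r))/\mathbf{l}^{(r)}!$. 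This yields an explicit expression for the coefficient of $t^{\mathbf{m}}/\mathbf{m}!$ as a weighted sum of products of dimensions of $\cP$ and $\cQ$ with multinomial prefactors.

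Second I would unwind the left-hand side. The direct sum over $(\mathbf{m},\chi_2)$ in the definition of symmetric composition reorganises as a sum over intermediate multi-indices $\mathbf{k}$ and $\Sg_m$-orbits of colourings $\chi_2$; replacing each orbit by its standard representative $\st_{\mathbf{k}}$ and accounting for the $\Sg_m$-coinvariants yields a prefactor $m!/(k_1!\cdots k_d!)$. The remaining inner direct sum over surjections $f:\mathbf{n}\twoheadrightarrow\mathbf{m}$, grouped by the fibre-colour distributions $\mathbf{l}^{(r)}=(|f^{-1}(r)\cap\chi_1^{-1}(c_j)|)_j$, produces exactly the multinomial prefactors that appeared on the right, after which term-by-term comparison gives the desired equality.

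The main obstacle is the careful treatment of the $\Sg_m$-coinvariants in the tensor product $\cP(m,\chi_2,c_i)\otimes_{\Bbbk\Sg_m}(\cdots)$: one must check that the $\Sg_m$-action on the colouring $\chi_2$ conspires correctly with the permutation of the tensor factors indexed by $f$, so that passing to coinvariants replaces the sum over arbitrary colourings by a sum over standard ones weighted by the orbit size. Once this equivariant bookkeeping is sorted out, the identity is the coloured analogue of the classical fact that composition of exponential generating series corresponds to the composition of species.
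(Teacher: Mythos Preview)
Your proposal is correct and follows essentially the same approach as the paper: both arguments extract the coefficient of a fixed monomial on each side, expand the substitution $F^i_{\cP}(F^1_{\cQ},\ldots,F^d_{\cQ})$ as a sum over intermediate colour-multiplicities and tuples of $\cQ$-contributions, and match this against the dimension of $(\cP\circ\cQ)$ by reorganising the sum over surjections and handling the $\Sg_m$-coinvariants via orbit sizes. Your write-up is somewhat more structured (separating the right-hand and left-hand expansions cleanly), but the underlying computation is the same.
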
{}

\begin{proof}
Let the generating series of $\cP$ be in the variables $t_j$ and the generating series of $\cQ$ in the variables $s_k$. Choose an arbitrary colour $c_i$ and consider the monomial $\frac{\dim \cP(\Sigma m_i, \chi , c_i )}{m_1 ! \ldots m_d!} t_1^{m_1}\ldots t_d^{m_d}$ in $F^i_{\cP}$. Substituting $t_j$ for $F^j_{\cQ}(s_1,\ldots,s_d)$ we have:

\begin{multline}
\frac{\dim \cP(\Sigma m_i, \chi , c_i )}{m_1 ! \ldots m_d!} (F^1_{\cQ})^{m_1}\ldots (F^d_{\cQ})^{m_d} = 
\frac{\dim \cP(\Sigma m_i, \chi , c_i )}{m_1 ! \ldots m_d!}
\times
\\
\times
\big( 
\underbrace{
    \sum_{p^1_1,\ldots,p^1_d\geq 0}
    \frac{\dim {\cQ}(\Sigma p^1_i, \chi , c_1 )}{p^1_1 ! \ldots p^1_d!} s_1^{p^1_1}\ldots s_d^{p^1_d}
}_{m_1 \text{times}}
\cdot
\ldots
\cdot
\underbrace{
    \sum_{p^d_1,\ldots,p^d_d\geq 0}
    \frac{\dim {\cQ}(\Sigma p^d_i, \chi , c_d )}{p^d_1 ! \ldots p^d_d!} s_1^{p^d_1}\ldots s_d^{p^d_d}
}_{m_d \text{times}}
\big)
\end{multline}
Note that if $\chi = \sigma \st_{\boldsymbol{w}}$ for a weight vector $\boldsymbol{w}$ and a permutation $\sigma$, then 
\[
\dim {\cQ}(\Sigma p^d_i; \chi , c_d ) = \dim {\cQ}(\Sigma p^d_i; \st_{\boldsymbol{w}}, c_d ),
\]
so the coefficient of $s_1^{p^j_1}\ldots s_d^{p^j_d}$ in the respective sum is equal to $\dim {\cQ}(\Sigma p^d_i; \st_{\boldsymbol{w}}, c_d )$.
\\
Let's trace where some fixed monomial $s_1^{r_1}\ldots s_d^{r_d}$ appears in this expression. It comes from a choice of a summand in each of the inner sums, that is from a partition of each $s^{r_j}$ into $\Sigma m_i$ summands.

Defining such partition for all $s^{r_j}$'s is the same as defining a surjective map of the coloured set $\pmb{r}$ for $r = \Sigma r_j$ (the colouring is read from the exponents in the partition) onto some coloured set $M$ such that $M$ has $m_i$ elements of $i^{th}$ colour. Choosing any particular $M$ accounts for an ordering of the inner sums. There are $m_1 ! \ldots m_d!$ such sets, and we denote by $M_{st}$ the one that has the colouring $\st_{\pmb{m}}$.

Denote by $C_f$ the coefficient given by the map $f$, namely:

\[
	C_f = \prod_{k = 0}^{\Sigma m_i} \dim {\cQ}(\Sigma f^{-1}(e_k); \st_{\boldsymbol{w}_k}, c_k ),
\]

where $e_k$ denotes the $k^{th}$ element of $M$ and $\boldsymbol{w}_k$ denotes the weight vector corresponding to $f^{-1}(e_k)$.

So the coefficient of $s_1^{r_1}\ldots s_d^{r_d}$ is equal to:
\begin{multline}
	C(r_1, \ldots, r_d) =
	\sum_{m_1,\ldots,m_d\geq 0; \chi} \big[ 
	\frac{\dim \cP(\Sigma m_i; \chi , c_i )}{m_1 ! \ldots m_d!} 
	\cdot
	\sum_{f: \pmb{r} \to M} C_f
	\big]
	=
	\\
	=
	\sum_{m_1,\ldots,m_d\geq 0; \chi} \big[ 
	\dim \cP(\Sigma m_i; \chi , c_i )
	\cdot
	\sum_{f: \pmb{r} \to M_{st}} C_f
	\big]
\end{multline}
This coefficient accounts for all colourings with colouring vector $(r_1, \ldots, r_d)$, so for any such colouring $\chi$ we should multiply this coefficient by $\frac{1}{r_1!\ldots r_d!}$, and the result is exactly the coefficient corresponding to $(\cP \circ \cQ) (\Sigma r_j, \chi, c)$ in $F_{\cP\circ \cQ}$.
\end{proof}{}

Note that one can also consider the generating series of characters of the product of symmetric groups.
Namely let $F_{\cP}^{I}(t_{ij})$ be a collection of $d:=|I|$ formal power series on $d$ families of variables $\{t_{i1},t_{i2},t_{i3},\ldots\}$ with $1\leq i \leq d:= |I|$  that are symmetric in each collection of variables:
$$
F_{\cP}^{I}(t_{ij}):=\sum_{m_1,\ldots,m_k} \mathsf{char}(\cP(m_1,\ldots,m_k,c)).
$$
The composition of collections corresponds to the plethystic substitution of characters:
$$
F_{\cP\circ\cQ}^{I} = F_{\cP}^{I} \circ F_{\cQ}^{I}  
$$
Recall, that the plethystic composition written in the basis of Newton power sums $$p_k(x_1,x_2,\ldots):=x_1^{k} + x_2^{k}+\ldots$$ can be written in the following way
\begin{multline*}
p_d\circ F(\ldots, p_1(t_{1i},t_{2i}, t_{3i}), \ldots, p_m(t_{1i},t_{2i}, \ldots t_{3i}), \ldots) = 
\\ =
 F(\ldots, p_d(t_{1i},t_{2i}, t_{3i}), \ldots, p_{dm}(t_{1i},t_{2i}, \ldots t_{3i}), \ldots).  
\end{multline*}

\subsection{Forgetful functor} 
\label{FFuncDef}

The forgetful functor $\Ord_I \to \Fin_I$ disregarding the ordering of sets gives rise to a forgetful functor $\cF$ from the category of symmetric coloured collections to the category of nonsymmetric coloured collections which forgets the \mbox{$\Sg$-module} structure of the vector space. 
By the same considerations as in Prop. 3 from \cite{KH}, this functor commutes with the operadic compositions in the following sense: for two symmetric coloured collections $\cP$ and $\cQ$:
\[
	\cF(\cP \circ \cQ) = \cF(\cP) \circ_{sh} \cF(\cQ).
\]
So this is in fact a functor from the category of symmetric coloured operads to the category of shuffle coloured operads. We will explore this functor in greater detail in section \ref{FFuncComb}.

\section{Combinatorial description}
\label{secComb}
In this section we give combinatorial descriptions for the free operad of each type.
\subsection{Coloured trees}

\begin{definition}
A  \textbf{\emph{coloured rooted tree}} is a non-empty directed tree such that:
\begin{itemize}
    \item Every vertex has at least one incoming edge (its \emph{inputs}) and exactly one outgoing edge (its \emph{output}).
    \item Edges are allowed tobe connected with only one vertex, such (half)edges are called \emph{external}. 
    \item There is exactly one outgoing external edge, this edge is called the \emph{output} of the tree. The free endpoint of the output is called the \emph{root} of the tree.
    \item The free endpoints of the incoming external edges are called the \emph{leaves} of the tree. We suppose the tree to be decorated, meaning that the leaves of the tree are bijectively marked with the elements of the set $\mathbf{n}$ (called \emph{labels}) for some $n$.
    \item All edges of the tree are coloured with the set $I$.
\end{itemize}
\end{definition}

A coloured rooted tree with one vertex is called a \emph{corolla}. A coloured rooted tree with no vertices is called a \emph{degenerate tree}.

We picture the trees to be growing from the root upward, so following the direction of edges one goes down the tree.

A planar representation of a directed tree is equivalent to an ordering of inputs for each vertex of the tree. We compare two inputs of a vertex by comparing the minimal label reachable going through each input up the tree, the input with the lesser reachable label is lesser.

Now our goal is, given a coloured collection $\cP$, construct a realisation of the free operad $F(\cP)$ generated by $\cP$. In all three cases the realisation will be given in terms of coloured rooted trees and the grafting operation on them. From now on by a \emph{tree} we will mean a coloured rooted tree.

\subsection{Free nonsymmetric coloured operad}
Let $\cP$ be a nonsymmetric coloured collection. Fix a basis $\mathbf{B}$ of $\cP$. Now we assign a planar tree to each element of $\mathbf{B}$.

First, to each identity element $\id_c$ we assign a degenerate tree of the corresponding colour. Then to an element $p$ of $\mathbf{B}$ belonging to $P(n, \chi, c)$ we assign a corolla with $n$ leaves with labels increasing from left to right, and we colour the leaves' edges according to $\chi$. We mark the vertex of the corolla by $p$.

We define the partial composition $T_1 \circ_l T_2$ of two trees by grafting $T_2$ on the input of $T_1$ labelled with $l$, provided that this input and the output of $T_2$ have the same colour. Otherwise we set the composition to be zero. 

The basis of the free operad $F(\cP)$ consists of all trees obtained by grafting procedure starting from the set of corollas. By definition, this basis is closed under partial composition. We will refer to the elements of this basis as the \emph{tree monomials}.

\subsection{Free shuffle coloured operad}
Let $\cP$ be a nonsymmetric coloured collection. We construct the set of degenerate trees and corollas similarly to the previous case. 
We define the partial composition $T_1 \circ_{l,\sigma} T_2$ of two trees by grafting $T_2$ on the input of $T_1$ labelled with $l$, provided that this input and the output of $T_2$ have the same colour. Otherwise we set the composition to be zero.

We label the inputs of the resulting tree the same way as with nonsymmetrical composition, and after that we act by $\sigma$ on labels coming from $T_2$ and the labels coming from $T_1$ to the right of the grafting site.

Note that the trees resulting from this procedure satisfy the \emph{shuffle condition}:

For each inner vertex of the tree, the smallest descendants in each subtree growing from this vertex form an increasing sequence.

Such trees are called \emph{shuffle trees}.

The basis of the free operad $F(\cP)$ consists of all trees obtained by this grafting procedure starting from the set of corollas.

\subsection{Free symmetric coloured operad}
Let $\cP$ be a symmetric coloured collection. We construct the set of degenerate trees and corollas similarly to the previous cases, but now we render our trees as not equipped with planarization.

As in the previous cases, we define the partial composition $T_1 \circ_{l,\sigma} T_2$ of two trees by grafting $T_2$ on the input of $T_1$ labelled with $l$, provided that this input and the output of $T_2$ have the same colour, and otherwise set the composition to be zero. 

We label the inputs of the resulting tree the same way as with nonsymmetrical composition, and after that we act by $\sigma$ on all labels of our tree. 

Again, the basis of the free operad $F(\cP)$ consists of all trees obtained by this grafting procedure starting from the set of corollas.

\subsection{Gradings}

A tree in the basis of the free operad $\cF$ has three separate gradings:

\begin{enumerate}
    \item \emph{Arity degree} -- the number of leaves of the tree. The space of elements of arity degree $n$ is $\cF(n)$.
    \item \emph{Operation degree} -- the number of inner vertices of the tree.
    \item \emph{Colour degree} -- a vector $\boldsymbol{c}$ with $\boldsymbol{c}_i$ equal to the number of inputs coloured with $i$ minus the number of outputs coloured with $i$ (which is $0$ or $1$).
\end{enumerate}{}

Note that all three gradings are additive under operadic compositions.

\begin{definition}
An element of the free operad is said to be \emph{homogeneous} if it is a sum of basis elements with the same arity degree.
\end{definition}{}

\subsection{Forgetful functor}
\label{FFuncComb}
We now return to the forgetful functor $\cF: \SymOp_I \to \NSymOp_I $ defined in section \ref{FFuncDef}. As all our computations will involve transferring from a symmetric coloured operad to the corresponding shuffle coloured operad, we would like to provide a more concrete description of this functor.

In our setting, operads are usually defined through generators and relations. First we need to determine how $\cF$ acts on the set of generators of an operad.

Recall that each space of operations $\cP(n, \chi, c)$ of a symmetric operad is an $\Sigma_n$-module, so each generator $g$ comes with the orbit of $g$ under the action of the permutation group. The forgetful functor erases this action, so we need to introduce a new generator for each operation in the orbit of $g$. After this we need to choose a planarization of the resulting generators so that they are legitimate elements in the shuffle operad.

\begin{example} Suppose we are given two generators $\alpha$ and $r$ of arity 2, such $\Sigma_2$ acts trivially on $\alpha$ and non-trivially on $r$. Then we will need to introduce a new generator $l: l(x, y) = r(y, x)$:
\[
	\quadTreeSSS{\al}{1}{2}{,}
	\quadTreeDDS{r}{1}{2}{\xrar{\cF}}
	\quadTreeSSS{\al}{1}{2}{,}
	\quadTreeDDS{r}{1}{2}{,}
	\quadTreeDSD{l}{1}{2}{}	
\]
\end{example}

Now we need to do the same with relations, which also come with their $\Sg$-orbits. For each relation and each permutation, act by the permutation on the relation, and then choose a planarization of the result such that all trees are shuffle trees.

\begin{example}
Suppose we have the following  quadratic relation on the generators from the previous example:
	\[
		\cubLeftTreeDDSDS{r}{r}{1}{3}{2}{-}
		\cubRightTreeDDSSS{r}{\al}{1}{2}{3}{}
	\]
The identity permutation will give us just the relation itself. The transposition $(12)$ will give us:
	\[
		\cubRightTreeDSDDS{l}{r}{1}{2}{3}{-}
		\cubLeftTreeDSDSS{l}{\al}{1}{3}{2}{}
	\]
The transposition $(23)$ yields:
	\[
		\cubLeftTreeDDSDS{r}{r}{1}{2}{3}{-}
		\cubRightTreeDDSSS{r}{\al}{1}{2}{3}{}
	\]
The transposition $(13)$ yields:
	\[
		\cubLeftTreeDDSSD{r}{l}{1}{3}{2}{-}
		\cubLeftTreeDSDSS{l}{\al}{1}{2}{3}{}
	\]
The cycle $c = (1 2 3)$ will give us:
	\[
		\cubLeftTreeDDSSD{r}{l}{1}{2}{3}{-}
		\cubLeftTreeDSDSS{l}{\al}{1}{3}{2}{}
	\]

And the cycle $c^2 = (1 3 2)$ will give us:

	\[
		\cubRightTreeDSDSD{l}{l}{1}{2}{3}{-}
		\cubLeftTreeDSDSS{l}{\al}{1}{2}{3}{}
	\]

\end{example}

\section{Gr\"obner bases}
\label{secGBases}

In this section we define all entities needed for the definition of a Gr\"obner basis.

\subsection{Admissible orderings}

From now on by \emph{operad} we mean \emph{shuffle coloured operad} unless specified otherwise. 

\begin{definition}
Let $\cF$ be a free operad. An ordering of the tree monomials of $\cF$ is said to be \emph{admissible} if the following holds:

\begin{enumerate}
    \item If $n < m$ then $\alpha < \beta$ for all $\alpha \in \cF(n), \beta \in \cF(m).$
    \item For $\alpha, \alpha' \in \cF(m)$, $\beta, \beta' \in \cF(n)$, if $\alpha \leq \alpha', \beta \leq \beta'$ then $\alpha \circ_{i, \omega} \beta \leq \alpha' \circ_{i, \omega} \beta'$ for all possible operadic compositions.
\end{enumerate}{}

\end{definition}{}

Our goal is to construct an admissible ordering of the monomials in the free operad. We claim that the construction of \emph{path-lexicographic ordering} from \cite{KH} can be transferred to the coloured setting. Recall that the path-lexicographic ordering for (non-coloured) shuffle operad is constructed as follows:

\begin{itemize}
    \item For a tree monomial  $\alpha \in \cF(n)$ construct a vector $\boldsymbol{a} = (a_1, \ldots, a_n)$, where $a_i$ is the word composed of vertex labels on the path from the root of the tree to the $i^{th}$ leaf, and a permutation $s \in S_n$ which is read from leaves left to right (recall that shuffle tree is planar).
    \item To compare two monomials, first compare their arities (the lengths of the sequence $(a_1, \ldots, a_n)$).
    \item If arities are equal, compare the vectors $\boldsymbol{a}$ component-wise using degree-lexicographic ordering on words.
    \item If vectors $\boldsymbol{a}$ are equal, compare the permutations using reverse lexicographic order. 
\end{itemize}{}

\begin{example}
For the tree monomial
	$
		\cubLeftTreeDDSSD{r}{l}{1}{3}{2}{}
	$
one has $(\boldsymbol{a}\,| \, s) = ((rl, r, rl) \, | \,(132))$.
\end{example}

\begin{remark}
Given the vector $\boldsymbol{a}$ and the colouring data of the generating operations one can restore the colourings of all edges of the tree, so this construction accounts for the colouring data as well.
\end{remark}

\begin{pr}
The path-lexicographic ordering is admissible.
\end{pr}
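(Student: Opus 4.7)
The plan is to verify directly that the two conditions in the definition of admissibility hold for the path-lexicographic ordering.

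Condition (1) is immediate from the construction: arity of the underlying tree is the first quantity compared, so every element of $\cF(n)$ strictly precedes every element of $\cF(m)$ whenever $n<m$.

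For condition (2), fix a composition slot $i$ and a shuffle $\omega$. Following the strategy of the uncoloured case in \cite{KH}, I would split the desired inequality $\alpha \circ_{i,\omega}\beta \le \alpha' \circ_{i,\omega}\beta'$ into the two separate monotonicities
$$
\alpha \le \alpha' \;\Rightarrow\; \alpha \circ_{i,\omega} \beta \le \alpha' \circ_{i,\omega} \beta, \qquad \beta \le \beta' \;\Rightarrow\; \alpha \circ_{i,\omega} \beta \le \alpha \circ_{i,\omega} \beta',
$$
and then chain them. To handle each one I would first describe explicitly how the path vector $(c_1,\ldots,c_{m+n-1})$ and the leaf-permutation of a grafting $\gamma = \alpha\circ_{i,\omega}\beta$ are assembled from those of $\alpha$ and $\beta$: writing the path vector of $\alpha$ as $(a_1,\ldots,a_m)$ and of $\beta$ as $(b_1,\ldots,b_n)$, the paths in $\gamma$ reaching leaves inherited from $\alpha$ at positions $j\neq i$ are the unchanged $a_j$, the paths reaching the leaves of $\beta$ have the form $a_i\cdot b_k$, and the leaf-permutation of $\gamma$ is obtained from that of $\alpha$ by replacing the single entry at slot $i$ with the block of $\omega$-shuffled labels coming from $\beta$.

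With this explicit assembly formula, both monotonicities become structural. Holding $\beta$ fixed, the inserted $\beta$-block is identical on both sides, so the component-wise degree-lexicographic comparison of the path vectors of $\alpha\circ_{i,\omega}\beta$ and $\alpha'\circ_{i,\omega}\beta$ reduces to the comparison of the original path vectors of $\alpha$ and $\alpha'$. Holding $\alpha$ fixed, the $\alpha$-inherited entries on the two sides coincide, and the common prefix $a_i$ does not alter the lex order of the $\beta$-blocks; hence the comparison reduces to that of the path vectors of $\beta$ and $\beta'$.

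The step I expect to require the most care is the secondary tiebreaker on leaf-permutations, since the reverse-lexicographic comparison is touchier than the plain one. Here one again reads off from the assembly formula that the fixed shuffle $\omega$ is applied in exactly the same way on both sides, so the reverse-lex comparison of the resulting permutations is governed by the reverse-lex comparison of the underlying permutations of $\alpha$ and $\beta$, and the monotonicity passes through. As for the coloured setting itself, it introduces no new obstacle: the colour of every edge in a tree monomial is determined by the vertex label directly below it together with the colouring data of the generators, so path-vector comparison already incorporates the colour information, and colours only further restrict which graftings are non-zero without affecting any comparison that is actually being made.
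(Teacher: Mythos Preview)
Your approach is correct but differs substantively from the paper's. The paper does not re-verify the two admissibility conditions at all; instead it invokes Proposition~\ref{inclusion into uncoloured}, which realises the free coloured shuffle operad $\cF$ as a submonoid of the corresponding free \emph{uncoloured} shuffle operad $\cF_{\unc}$. Since the path-lexicographic ordering is already known to be admissible on $\cF_{\unc}$ by \cite{KH}, and since every coloured composition is in particular an uncoloured composition, the admissibility on $\cF$ follows by restriction in one line. Your route, by contrast, is essentially to redo the argument of \cite{KH} in situ: describe how the path vector and leaf permutation of $\alpha\circ_{i,\omega}\beta$ are assembled from those of $\alpha$ and $\beta$, and then check monotonicity in each argument separately. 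This is self-contained and makes no appeal to the embedding, at the cost of repeating work already in the literature; the paper's reduction is shorter and exploits that colours only \emph{remove} compositions, never add them. One small caution on your sketch: when you fix $\beta$ and vary $\alpha$, the $\beta$-block entries $a_i b_k$ versus $a'_i b_k$ are not literally ``identical on both sides'' unless $a_i=a'_i$; what actually makes the argument go through is that degree-lexicographic order on words is compatible with right concatenation, so $a_i\le a'_i$ forces $a_i b_k\le a'_i b_k$ for every $k$.
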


\begin{proof}
Recall the definition of $\cF_{\unc}$ from Proposition \ref{inclusion into uncoloured}. In \cite{KH} it is shown that the path-lexicographic ordering is admissible for $\cF_{\unc}$. But the requirements for being admissible in terms of coloured composition are less strict than for being admissible in terms of uncoloured composition, as the former is the subset of the latter. The trees of $\cF$ are also a subset of trees of $\cF_{\unc}$, and the restriction of an ordering on any subset is again an ordering.
\end{proof}

\subsection{QM-ordering}
\label{QMOrd}

The path-lexicographic ordering and its variations turn out to be inconvenient for calculating the Gr\"obner bases of some operads (e.g. the operad of Poisson algebras). In \cite{QMOrd} a new family of orderings was introduced, and we will employ an ordering of this type in our examples. We will call this type of orderings QM-orderings, which stands for Quantum Monomial.

The path-lexicographic ordering is based on the comparison of words in a free noncommutative algebra generated by the set of generators of the operad. The idea of a QM-ordering is to replace  monomials in the free noncommutative algebra by monomials in the algebra of quantum polynomials.
%with the algebra with additional generator $q$ of degree 1, which commutes with all other generators, and impose the relations of the form $yx = xyq$, so that the generators commute up to the factor of $q$.

For our purposes it will suffice to construct QM-orderings for the operad with two generators $x, y$, so the algebra of quantum monomials is $\Bbbk \langle x, y \rangle / (xq - qx, yq - qy, yx - xyq)$ where $q$ is a formal parameter that commutes with $x$ and $y$. To compare two monomials in this algebra, first write them in the standard form $x^{a_1} y^{b_1}q^{c_1} <> x^{a_2}y^{b_2}q^{c_2}$. Then use the following rule
$$
x^{a_1} y^{b_1}q^{c_1} < x^{a_2}y^{b_2}q^{c_2} \Leftrightarrow 
\left[
\begin{array}{l}
a_1 > a_2, \\
(a_1 = a_2)\ \& \ (b_1 < b_2) \\
(a_1 = a_2)\ \& \ (b_1 = b_2)\ \& \ (c_1 < c_2)
\end{array}
\right.
$$

Having a comparison for words in the algebra, that is compatible with multiplication, we expand this ordering to an ordering on the free operad by associating a vector of words corresponding to paths from the root to leaves, same as we did for path-lexicographic ordering. We refer to \cite{QMOrd} for the proof that this is indeed an admissible ordering. In all computations were are dealing with  a $QM$-ordering the choice of the extension of the partial $QM$-ordering does not affect the story because the monomials that are not comparable with respect to a given $QM$-ordering do not interact with each other under the Buchberger algorithm.

\subsection{Divisibility}
Consider a tree monomial $\alpha$ with the underlying tree $T$. For a subtree $T'$ of $T$, containing all edges adjacent to the vertices of the subtree, we define a tree monomial $\alpha'$ as follows: the underlying coloured tree of $\alpha'$ is $T'$ and the labelling of the leaves is determined by the smallest descendant ordering, that is, the leaf with the smallest leaf label among its descendants gets the label $1$, the leaf with the same property among the yet unlabelled leaves gets the label $2$ and so on.

\begin{definition}
A tree monomial $\alpha$ \emph{is divisible by} a tree monomial $\beta$ if there is a subtree $T'$ of the underlying tree of $\alpha$, such that $\beta$ is the corresponding tree monomial for $T'$.
\end{definition}{}

As $\beta$ corresponds to a proper subtree of $\alpha$, we can obtain $\alpha$ by applying operadic compositions to $\beta$. This sequence of compositions can be applied to any tree monomial with the same number and colouring of the inputs and the output as $\beta$. This yields an operator on tree monomials which we denote by $m_{\alpha, \beta}$. Note that since the ordering of the tree monomials is compatible with the operadic compositions, if $\gamma < \beta$ then $m_{\alpha, \beta}(\gamma) < \alpha$.

\subsection{Reductions and $S$-polynomials}

In this section we recall the notions introduced in \cite{KH}, as they also suit the case of coloured operads.

\begin{definition}
For an element $f$ of the free operad its leading term $\lt(f)$ is the largest (in terms of the chosen admissible ordering) tree monomial in the expansion of $f$. The coefficient of $\lt(f)$ is called \emph{the leading coefficient} and denoted by $c_f$.
\end{definition}{}

\begin{definition}
For two homogeneous element $f$ and $g$ such that $\lt(f)$ is divisible by $\lt(g)$ we define \emph{reduction of $f$ modulo $g$} by the formula:
\[
 \rd_g(f)=f-\frac{c_f}{c_g}m_{\lt(f),\lt(g)}(g)
\]
 By construction we have $\lt(\rd_g(f)) < \lt(f)$.
\end{definition}{}

\begin{definition}
A tree monomial $\gamma$ is called a \emph{common multiple} of the tree monomials $\alpha$ and $\beta$, if it is divisible by both $\alpha$ and $\beta$. Tree monomials $\alpha$ and $\beta$ are said to have a \emph{small common 
multiple}, if they have a common multiple that is a union of two overlapping trees with one of these trees being isomorphic to $\alpha$ and another isomorphic to $\beta$ as a shuffle tree.
In particular, the  the number of vertices of the underlying tree is less than the total number of vertices for $\alpha$ and $\beta$. 
\end{definition}

Assume we have two homogeneous elements $f$ and $g$ whose leading terms have a small common multiple $\gamma$. In this setup we give the following definition:

\begin{definition}
\emph{The $S$-polynomial of $f$ and $g$ corresponding to $\gamma$} is defined by the formula:
\[
s_\gamma(f,g)=m_{\gamma,\lt(f)}(f)-\frac{c_f}{c_g} m_{\gamma,\lt(g)}(g).
\]
\end{definition}{}

\subsection{Gr\"obner bases}

\begin{definition}
Let $\cM$ be an operadic ideal  in a free $I$-coloured shuffle operad $\cF$ with a chosen admissible  ordering of monomials in $\cF^{I}$ and let $G$ be a set generating $\cM$. $G$ is called a
Gr\"obner basis of $\cM$ if for any element $f$ in $\cM$ the leading term of $f$ is divisible by the leading term of some element of $G$.
\end{definition}{}

This setting allows us to implement the classical Buchberger algorithm (for the description of the algorithm in operadic context we refer to Section \S3.7 of \cite{KH}.

Proposition~\ref{inclusion into uncoloured} says that the $I$-coloured shuffle operad $\cF^{I}/\cM$ is isomorphic to the quotient of the free shuffle (uncoloured) operad $\cF$ by the ideal $\widetilde{\cM}$ that is a union of $\cM$ and all compositions that contradicts the colouring. 

\begin{theo}
Let $G$ be a Gr\"obner basis of an operadic ideal  $\cM$  in a free $I$-coloured shuffle operad $\cF^{I}(a_1,\ldots,a_k)$ and let $B$ be the set $\{a_i\circ_l a_j\}$ of all partial composition of generators with the inconsistent colouring of the $l$'th input of $a_i$ and the output of $a_j$ considered as quadratic monomials in the free uncoloured shuffle operad $\cF(a_1,\ldots,a_n)$. Then the union $G\sqcup B$ constitutes the Gr\"obner basis of the ideal $\widetilde{\cM}\subset \cF$.
\end{theo}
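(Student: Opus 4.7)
The plan is to reduce the verification of the Gröbner basis property for $\widetilde{\cM}\subset\cF$ to a clean decomposition of tree monomials into two disjoint families: the colour-consistent ones (spanning $\cF^I$) and the colour-inconsistent ones (spanning the ideal $\langle B\rangle$ in $\cF$). Proposition~\ref{inclusion into uncoloured} gives a splitting of vector spaces $\cF = \cF^I \oplus \langle B\rangle$, where the second summand is spanned by those tree monomials that contain at least one internal edge on which the output colour of the lower vertex does not match the prescribed input colour of the upper vertex. The first observation I would make is that such an inconsistency is inherited by every further composition: once a tree contains a mismatched edge, grafting at any leaf (or using it as a component in a bigger composition) leaves that edge untouched, so every tree monomial in $\langle B\rangle$ is colour-inconsistent, and, conversely, any colour-inconsistent tree monomial literally \emph{contains} a two-vertex subtree of the form $a_i\circ_l a_j\in B$, hence is divisible by an element of $B$.

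Next, I would combine this splitting with the definition of $\widetilde{\cM}$ as $\cM + \langle B\rangle$, to obtain the decomposition
\[
\widetilde{\cM} = \cM \oplus \langle B\rangle,
\]
where the sum is direct because $\cM\subset\cF^I$ and $\cF^I\cap\langle B\rangle=0$ in the monomial basis of $\cF$. Thus any $f\in\widetilde{\cM}$ admits a unique decomposition $f = g + b$ with $g\in\cM$ and $b\in\langle B\rangle$, and the sets of tree monomials appearing in $g$ and $b$ are disjoint (colour-consistent versus colour-inconsistent), so
\[
\lt(f) = \max\bigl(\lt(g),\,\lt(b)\bigr)
\]
with respect to the fixed admissible ordering on $\cF$.

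Now the verification splits into two cases. If $\lt(f)=\lt(g)$, then $\lt(g)$ is the leading term of an element of $\cM$, and since $G$ is by hypothesis a Gröbner basis of $\cM$ in $\cF^I$, and the admissible ordering on $\cF$ restricts to the one on $\cF^I$ (as shown in the admissibility proposition for the path-lexicographic order), there exists $h\in G$ with $\lt(h)$ dividing $\lt(g)=\lt(f)$. If instead $\lt(f)=\lt(b)$, then $\lt(f)$ is a colour-inconsistent monomial, so by the first observation it is divisible by some element of $B$, which is its own leading term. Either way $\lt(f)$ is divisible by $\lt(h)$ for some $h\in G\sqcup B$, which is exactly the Gröbner basis property for $\widetilde{\cM}$.

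The only mild subtlety I expect is the careful bookkeeping around the claim that every colour-inconsistent tree monomial is divisible by an element of $B$ (rather than merely having $B$ among its quadratic subtrees up to some equivalence). This is transparent for shuffle operads because tree monomials form a literal basis and the induced labelling of the offending two-vertex subtree via the smallest-descendant convention yields precisely the corolla composition $a_i\circ_l a_j$ listed in $B$; so no real obstacle arises, and the whole argument is essentially a direct sum decomposition together with a case analysis on which summand the leading monomial sits in.
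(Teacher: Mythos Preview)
Your proof is correct and actually cleaner than the paper's. The paper argues via the Buchberger criterion: it observes that the monomial ideal $\widetilde{B}=\langle B\rangle$ consists exactly of the colour-inconsistent tree monomials, so any small common multiple between an element of $B$ and an element of $G$ already sits in $\widetilde{B}$, and therefore the corresponding $S$-polynomial is reduced to zero using $B$ alone. (The $G$--$G$ and $B$--$B$ cases are left implicit: the former because $G$ is already a Gr\"obner basis in $\cF^{I}$ and any extra common multiples in $\cF$ are colour-inconsistent, the latter because $B$ is monomial.)

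You bypass the Buchberger criterion entirely by going back to the definition. The decomposition $\cF=\cF^{I}\oplus\langle B\rangle$ on the monomial level immediately yields $\widetilde{\cM}=\cM\oplus\langle B\rangle$, and since the two summands have disjoint monomial supports, the leading term of any $f\in\widetilde{\cM}$ is either a colour-consistent monomial (handled by $G$) or a colour-inconsistent one (divisible by some element of $B$). This is more elementary and makes the structure of $\widetilde{\cM}$ transparent; the paper's route has the advantage of being one line once the Buchberger machinery is in place, but at the cost of suppressing the case analysis you make explicit.
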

\begin{proof}
Note that the set of colour mixing compositions constitute an ideal $\widetilde{B}\subset \cF$ generated by $B$. Therefore, each small common multiple $\gamma$ of the colour mixing relations $a_i\circ_l a_j$ and any element $\alpha\in G$ belongs to $\widetilde{B}$ and, in particular, the corresponding
 $S$-polynomial associated with $\gamma$ is reduced to zero using the relations from $B$. 
\end{proof}	

P. van der Laan explained in~\cite{vanderLaan} that $I$-coloured (co)operads admit Bar and coBar constructions and quadratic coloured operads admit the Koszul duality functor. One says that an $I$-coloured operad $\cP$ is Koszul whenever the coBar construction of its Koszul dual cooperad $\cP^{!}$ is quasi-isomorphic to $\cP$. In particular, the latter coBar construction $\Omega(\cP^{!})$ coincides with the minimal resolution of $\cP$ in the category of $I$-coloured operads.

\begin{theo}
\label{thm::PBW::Koszul}	
	Suppose that an $I$-coloured operad $\cP$ generated by the given set $\{a_1,\ldots,a_n\}$  admits a quadratic Gr\"obner basis with respect to an admissible ordering $\prec$ of monomials in the free $I$-coloured shuffle operad $\cF(a_1,\ldots,a_n)$.
	Then the $I$-coloured operad $\cP$ is Koszul and its coloured Koszul dual operad $\cP^{!}$ generated by the dual set of generators $\{a_1^{\vee},\ldots, a_n^{\vee}\}$ admits a quadratic Gr\"obner basis of relations with respect to the reverse admissible ordering of monomials $a^{\vee}\prec_{op}b^{\vee}\stackrel{\mathrm{def}}{\Leftrightarrow} a\succ b$ of the same arity/homogeneity in $\cF(a_1^{\vee},\ldots, a_n^{\vee})$.
\end{theo}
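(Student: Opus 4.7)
The plan is to parallel the classical PBW-criterion for Koszulness of operads, as developed in~\cite{KH} and refined in~\cite{DotsenkoVallette}, adapting each ingredient to the $I$-coloured shuffle setting.

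First I would deduce from the Gröbner basis hypothesis that $\cP$ admits a basis of \emph{normal monomials}, namely those tree monomials in $\cF(a_1,\ldots,a_n)$ which contain no subtree equal to the leading term of any element of the Gröbner basis. The standard reduction argument (repeatedly applying $\rd_g$ for $g\in G$) shows that every element of $\cF$ reduces modulo $\cM$ to a unique linear combination of normal monomials; compatibility of the admissible ordering $\prec$ with operadic compositions guarantees termination. I would then introduce the increasing filtration $F_\alpha\cP$ spanned by the images of tree monomials $\preceq\alpha$, and check that the associated graded operad $\gr\cP$ has presentation $\cF(a_1,\ldots,a_n)/(\lt(G))$. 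Since $G$ is quadratic, $\gr\cP$ is a quadratic monomial $I$-coloured shuffle operad, and its underlying vector space coincides with that of $\cP$, via the normal monomial basis.

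Next I would prove that a quadratic monomial $I$-coloured shuffle operad is Koszul. Here the argument from the uncoloured case transfers verbatim: the Koszul dual cooperad of a monomial operad can be described combinatorially in terms of the trees that avoid the leading monomials, and the Koszul complex decomposes into a direct sum of complexes indexed by individual tree monomials, each of which is contractible by an explicit homotopy built from the leaf-labelling (the colouring data is preserved at every step, since all compositions in the Koszul complex preserve colour-matching). Proposition~\ref{inclusion into uncoloured} is useful here: the $I$-coloured Koszul complex embeds into the uncoloured one, and the contracting homotopy on the latter respects the colour grading. Having established Koszulness of $\gr\cP$, a standard spectral sequence argument (the filtration on the bar complex induced by $F_\bullet\cP$ converges to $\Tor^{\cP}$, and its $E^1$-page computes $\Tor^{\gr\cP}$, which is concentrated on the diagonal) shows that $\cP$ itself is Koszul.

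For the statement about $\cP^!$, recall that the defining relations of $\cP^!$ form the orthogonal complement $R^\perp$ to the space $R$ of quadratic relations of $\cP$ inside the space $\cF(a_1,\ldots,a_n)^{(2)}$, which has a natural monomial basis given by quadratic shuffle trees. The quadratic Gröbner basis hypothesis means that $\lt_\prec(R)$ and its complement partition the quadratic monomial basis; the complementary set, read with respect to $\prec_{op}$, will be precisely the set of leading terms of a basis of $R^\perp$. To confirm that this basis is a Gröbner basis with respect to $\prec_{op}$, I would verify that every $S$-polynomial formed from two such leading terms reduces to zero. This follows from the PBW property of $\cP$ by the classical duality trick: a non-trivial obstruction to reducibility in $\cP^!$ would correspond to a non-trivial $\Tor^{\cP}_{3,3}$ class, which vanishes by Koszulness of $\cP$ (equivalently, by the cubic PBW condition for $\cP$ itself, which the quadratic Gröbner basis supplies).

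The main obstacle I expect is the second step, namely confirming that the contracting homotopy for the Koszul complex of a monomial operad is well-defined in the coloured shuffle setting. The combinatorics of shuffle trees with coloured edges, and in particular the compatibility of colourings under the subdivision/grafting operations used to build the homotopy, has to be checked carefully; once this is done, the remaining deformation and duality arguments are formal.
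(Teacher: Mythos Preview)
Your proposal is correct and follows essentially the same strategy as the paper: pass to the associated graded monomial operad $\gr\cP$, use that monomial quadratic operads are Koszul, deduce Koszulness of $\cP$ by deformation, and then observe that the leading terms of $R^\perp$ with respect to $\prec_{op}$ are exactly the complement of $\lt_\prec(R)$ among quadratic tree monomials. The only notable difference is in the final step: where you propose to verify the Gr\"obner property for $\cP^{!}$ via reducibility of $S$-polynomials (equivalently, vanishing of $\Tor^{\cP}_{3,3}$), the paper argues more directly by a dimension count --- Koszulness of $\cP$ forces $\dim\cP^{!}(n,\chi,c)=\dim(\gr\cP)^{!}(n,\chi,c)$ in each component, and since $(\gr\cP)^{!}$ is exactly the monomial operad cut out by the complementary leading terms, the normal monomials for $\bar G$ already span $\cP^{!}$, so $\bar G$ is a Gr\"obner basis. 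Both arguments are valid; the paper's is slightly shorter because it avoids revisiting $S$-polynomials, while yours makes the mechanism (obstruction theory in weight~$3$) more explicit.
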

Note that the uncoloured shuffle operad associated with $\cP^{!}$ differs from the  shuffle operad that is Koszul dual to the uncoloured operad associated with $\cP$. Therefore, Theorem~\ref{thm::PBW::Koszul} does not follow from the analogous statement known for ordinary shuffle operads. However, the strategy of the proof is the same:
\begin{proof}
If $G$ is a linear basis of quadratic relations in the $I$-coloured operad $\cP$ and $\widehat{G}$ is the set of leading monomials of $G$ with respect to the partial ordering $\prec$ then the dual space $Ann(G)$ admits a linear basis $\bar{G}$ whose leading monomials with respect to the reverse ordering $\succ_{op}$ consists of the complement of $G$ in the set of quadratic monomials.
The associated graded $I$-coloured shuffle operad $\gr\cP$ has monomial quadratic relations $\widehat{G}$ and therefore is Koszul, its shuffle $I$-coloured Koszul dual operad $(\gr\cP)^{!}$ has also monomial relations that are indexed by the aforementioned complement of $G$.  What follows that the operad $\cP$ is Koszul and the dimensions of graded components of the coloured Koszul dual operad $\cP^{!}$ coincides with the dimensions of the corresponding graded components of $(\gr\cP)^{!}$. Consequently, $\bar{G}$ constitutes a Gr\"obner bases of $\cP^{!}$. 
\end{proof}

\section{Examples}
\label{secExamples}

\subsection{$\ICom$ operad}
\label{sec::Icom}

The $\ICom$ operad is a symmetric coloured operad on two colours generated by three operations:
	\begin{gather}
	\label{IComGen}
		\unTreeSD{i}{,}
		\quadTreeSSS{\al}{1}{2}{=}
		\quadTreeSSS{\al}{2}{1}{,}
		\quadTreeDDS{r}{1}{2}{}
	\end{gather}
subject to the following quadratic relations:
	\begin{gather}
\label{ICCom}
		\cubLeftTreeSSSSS{\al}{\al}{1}{2}{3}{=}
		\cubLeftTreeSSSSS{\al}{\al}{1}{3}{2}{=}
		\cubRightTreeSSSSS{\al}{\al}{1}{2}{3}{}
\end{gather}
\begin{gather}
\label{ICMult}
{		\cubLeftTreeDDSDS{r}{r}{1}{2}{3}{=}
		\cubLeftTreeDDSDS{r}{r}{1}{3}{2}{=}
		\cubRightTreeDDSSS{r}{\al}{1}{2}{3}{}
}	\end{gather}
	\begin{gather}
	\label{ICln1}
		\begin{tikzpicture} [scale=0.6]
		    \node[ver] (z) {$\alpha$};
		    \node[ver] (l1v1) at (-1,1) {$i$} ;
		    \node (l1v2) at (1,1) {\tiny{2}} ;
		    \node (l2v1) at (-1,2) {\tiny{1}} ;
		    \coordinate (rt) at (0,-1);
		    \node at (2,0) {$=$};
			\draw[densely dotted] (l2v1) edge (l1v1);
		        \draw (l1v1) edge (z);
		        \draw (l1v2) edge (z);
		        \draw  (z) edge (rt); 
		\end{tikzpicture}
		\begin{tikzpicture} [scale=0.6]
		    \node[ver] (z) {$i$};
		    \node[ver] (l1v1) at (0,1) {$r$} ;
		    \node (l2v1) at (-1,2) {\tiny{1}} ;
		    \node (l2v2) at (1,2) {\tiny{2}} ;
		    \coordinate (rt) at (0,-1);
		    \node at (2,0) {};
			\draw[densely dotted] (l2v1) edge (l1v1);
			\draw (l2v2) edge (l1v1);
		        \draw[densely dotted] (l1v1) edge (z);
		        \draw (z) edge (rt); 
		\end{tikzpicture}
\\
	\label{ICln2}
		\JTreeDDSD{r}{i}{1}{2}{=}
		\JTreeDDSD{r}{i}{2}{1}{}	
	\end{gather}
Relation~\eqref{ICCom} means that $\alpha$ is a commutative associative multiplication, Relation~\eqref{ICMult} says that $r$ defines an action of this commutative algebra and Relation~\eqref{ICln1} says that $i$ is a map of modules of this algebra.
A typical algebra over the operad $\ICom$ is a pair $(A, I)$ of a commutative algebra $A$ and an ideal $I \hookrightarrow A$. $\alpha$ corresponds to the multiplication in $A$, $r$ to the multiplication of an element of the ideal by an element of $A$, and $i$ corresponds to the inclusion of $I$ into $A$.

The corresponding coloured shuffle operad has four generating operations, we denote them by $i, \alpha, r$ and $l$:

	\begin{gather}
		\label{ICShGen}
		\unTreeSD{i}{,}
		\quadTreeSSS{\al}{1}{2}{,}
		\quadTreeDDS{r}{1}{2}{,}
		\quadTreeDSD{l}{1}{2}{}
	\end{gather}

\begin{theo}
\label{thm::ICom}	
The $\Sigma_3$ and $\Sigma_2$ orbits of the defining quadratic relations \eqref{ICCom}-\eqref{ICln2}   constitute a quadratic Gr\"obner basis of the ideal of relations of the $2$-coloured operad $\ICom$
if one considers the path lexicographic ordering of the monomials associated with the following ordering of generators:
$\alpha < i < l < r$. 

The generating series of dimensions of $\ICom$ are equal to 
$$
\overrightarrow{F_{\ICom}}(t_1,t_2):=(F^1_{\ICom}(t_1, t_2);F^1_{\ICom}(t_1, t_2))  = \left(e^{t_1 + t_2} - 1; e^{t_1 + t_2} - e^{t_1}\right)
$$
\end{theo}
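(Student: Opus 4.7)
The plan divides into three phases: passage from the symmetric to the shuffle presentation, Buchberger verification of the claimed quadratic basis, and extraction of the generating series.

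First, following the procedure of \S\ref{FFuncComb}, I would replace each of the three symmetric generators of $\ICom$ by the corresponding shuffle generators $i,\alpha,r,l$ from \eqref{ICShGen}, and expand each of the relations \eqref{ICCom}--\eqref{ICln2} into its full $\Sigma_3$- or $\Sigma_2$-orbit, planarising each resulting tree so that the shuffle condition holds. This yields an explicit finite set $R$ of shuffle relations. With respect to the path-lexicographic ordering induced by $\alpha\prec i\prec l\prec r$, I would then tabulate the leading term $\lt(f)$ for each $f\in R$: the commutative-associative relation \eqref{ICCom} selects the left-comb in $\alpha$, the module axiom \eqref{ICMult} selects the left-comb form with $r$ at the root, and the relations \eqref{ICln1}--\eqref{ICln2} specify how $i$ is pushed above $l$ or $r$.

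Second, I would run the Buchberger procedure: enumerate all pairs of leading monomials with a small common multiple, form the corresponding $S$-polynomials, and reduce each to zero modulo $R$. By Proposition~\ref{inclusion into uncoloured} one must also include the colour-mixing quadratic compositions $a_i\circ_l a_j$ in the enumeration, but by the theorem in \S\ref{secGBases} those $S$-polynomials reduce automatically, so only the colour-consistent $S$-pairs need genuine attention. This enumeration is the main obstacle: although each individual reduction is short, the number of compatible pairs of leading monomials is large enough that a systematic, computer-assisted check is essentially unavoidable, which is precisely why the authors carry it out using the Python script referred to in the introduction. Once every $S$-polynomial reduces, $R$ is a quadratic Gr\"obner basis and Theorem~\ref{thm::PBW::Koszul} gives Koszulness as a bonus.

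Third, for the generating series, I would avoid routine enumeration of PBW normal forms and argue directly from the algebraic meaning of $\ICom$, which is justified by the PBW property. An operation in $\ICom(m_1,m_2;c_1)$ with $m_1+m_2\geq 1$ is uniquely the commutative product of all its inputs (with $i$ applied to the colour-$2$ inputs), so $\dim\ICom(m_1,m_2;c_1)=1$ for $m_1+m_2\geq 1$ and $0$ otherwise, whence
\[
F^{1}_{\ICom}(t_1,t_2)=\sum_{(m_1,m_2)\neq(0,0)}\frac{t_1^{m_1}t_2^{m_2}}{m_1!\,m_2!}=e^{t_1+t_2}-1.
\]
For the output of colour $2$, ideal-ness forces at least one colour-$2$ input, so $\dim\ICom(m_1,m_2;c_2)=1$ iff $m_2\geq 1$, giving
\[
F^{2}_{\ICom}(t_1,t_2)=e^{t_1}(e^{t_2}-1)=e^{t_1+t_2}-e^{t_1}.
\]
Alternatively, one can count the shuffle tree monomials not divisible by any leading term from Phase~2 and verify that the counts agree; this serves as a sanity check on the Gr\"obner basis computation.
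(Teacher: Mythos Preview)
Your overall strategy is sound, but it diverges from the paper's proof in one essential respect: the paper does \emph{not} verify the Gr\"obner basis for $\ICom$ by Buchberger reduction of $S$-polynomials. Instead, after listing the leading terms \eqref{ICComLT}--\eqref{ICIn2LT}, the paper gives an explicit combinatorial description of the monomials in $\gr\ICom$ (the right-growing trees of shape \eqref{ICBigTree}) and counts them: exactly one per colouring $\chi$ when the output is of the first colour, and one per colouring with $m\ge1$ when the output is of the second colour. This gives the upper bound $\dim\ICom\le\dim\gr\ICom$. The matching lower bound is then obtained exactly as in your Phase~3, by exhibiting an $\ICom$-algebra (a commutative algebra together with an ideal) in which all these operations are visibly nonzero. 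Equality of dimensions forces $\gr\ICom\cong\ICom$ as collections, which is precisely the statement that the relations form a Gr\"obner basis; the generating series drop out simultaneously.

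So the paper bypasses the $S$-polynomial computation entirely by a dimension argument, and the Python script you invoke is not used for $\ICom$ (it appears only for $\LieR$ in the appendix). Your Buchberger route would also succeed---the $S$-polynomials here are few and easy---but what you describe as the ``main obstacle'' is in fact circumvented by the paper. Conversely, your Phase~3 computation of the generating series is essentially the paper's lower-bound step, promoted to the end rather than used as the key ingredient. Each approach buys something: yours is uniform with the other examples in \S\ref{secExamples}; the paper's is shorter and self-contained, requiring no case analysis of overlaps.
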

%\begin{proof}
First, let us act by the symmetric group on each of the relations to obtain the relations in the shuffle operad, and then find the leading term in each acquired relation. For each relation in the symmetric coloured operad $\ICom$ we list the leading terms of relations in the shuffle operad produced by it.

Relation \eqref{ICCom} yields:
	\begin{gather}
		\label{ICComLT}
		\cubLeftTreeSSSSS{\al}{\al}{1}{2}{3}{,}
		\cubLeftTreeSSSSS{\al}{\al}{1}{3}{2}{}
	\end{gather}

Relation \eqref{ICMult} yields:
	\begin{gather}
	\label{ICMultLT1}
		\cubLeftTreeDDSDS{r}{r}{1}{2}{3}{,}
		\cubLeftTreeDDSDS{r}{r}{1}{3}{2}{,}
		\cubLeftTreeDDSSD{r}{l}{1}{2}{3}{,}
		\cubLeftTreeDDSSD{r}{l}{1}{3}{2}{}
	\end{gather}
and:
	\begin{gather}
	\label{lCMultLT2} 
		\cubLeftTreeDSDSS{l}{\al}{1}{2}{3}{,}
		\cubLeftTreeDSDSS{l}{\al}{1}{3}{2}{}
	\end{gather}
Relation \eqref{ICln1} yields:
	\begin{gather}
	\label{ICIn1LT}
		\highYTreeSDDS{i}{r}{1}{2}{,}
		\highYTreeSDSD{i}{l}{1}{2}{}
	\end{gather}
And the relation \eqref{ICln2} yields:
	\begin{gather}
	\label{ICIn2LT}
		\begin{tikzpicture}[scale=0.6]
		    \node[ver] (z) {$l$};
		    \node[ver] (l1v1) at (-1,1) {$i$} ;
		    \node[minimum size=1pt] (l1v2) at (1,1) {\tiny{2}} ;
		    \node[minimum size=1pt] (l2v1) at (-1,2) {\tiny{1}} ;
		    \coordinate (rt) at (0,-1);
		    \node at (2,0) {};
			\draw [densely dotted] (l2v1) edge (l1v1);
		        \draw (l1v1) edge (z);
		        \draw [densely dotted] (l1v2) edge (z);
		        \draw [densely dotted]  (z) edge (rt); 
		\end{tikzpicture}
	\end{gather}

Define $\gr \ICom$ as the factor of the free operad generated by $i, \alpha, r, l$ by the operadic ideal spanned by all the leading terms listed above.  We claim that

\begin{pr}
Starting with arity $2$, all trees in $\gr \ICom$ have the following general form: the tree grows only to the right; from the root up, first come $N_l \ge 0$ vertices of type $l$; then either tree terminates or comes exactly one vertex of type $r$; then come $N_{\alpha} \ge 0$ vertices of type $\alpha$. Additionally, any of the $\alpha$-type vertices and the $r$-type vertex may have vertices of type $i$ grafted upon them (thus $i$-type vertices are always leaves):
	\begin{gather}
	\label{ICBigTree}
		\begin{tikzpicture}[scale=0.6]
		    \node[ver] (z) {$l$};
		    \node  (l1v1) at (-1,1) {\tiny{$1$}} ;
		    \node[ver] (l1v2) at (1,1) {$l$} ;
		    \node  (l2v1) at (0,2) {\tiny{$2$}} ;
		    \node   (l2v2) at (2,2) {$\cdots$} ;
		    \node[ver] (l3v1) at (3,3) {$l$} ;
		    \node  (l4v1) at (2,4) {\tiny{$N_l$}} ;
		    \node[ver] (l4v2) at (4,4) {$r$} ;
		    \node  (l5v1) at (3,5) {\tiny{$N_l + 1$}} ;
		    \node[ver] (l5v2) at (5,5) {$\alpha$} ;
		    \node[ver] (l6v1) at (4,6) {$i$} ;
		    \node   (l6v2) at (6,6) {$\cdots$} ;
		    \node  (l7v1) at (4,7) {\tiny{$N_l + 2$}} ;
		    \node[ver]  (l7v2) at (7,7) {$\alpha$} ;
		    \node[ver] (l8v1) at (6,8) {$i$} ;
		    \node   (l8v2) at (8,8) {\tiny{$N_l + N_{\alpha} + 2$}} ;
		    \node   (l9v1) at (6,9) {\tiny{$N_l + N_{\alpha} + 1$}} ;
		    \coordinate (rt) at (0,-1);
			\draw  [densely dotted] (l9v1) edge (l8v1);
		        \draw  (l8v1) edge (l7v2);
		        \draw (l8v2) edge (l7v2);
		        \draw [densely dotted]  (l7v1) edge (l6v1);
		        \draw (l7v2) edge (l6v2);
		        \draw  (l6v1) edge (l5v2);
		        \draw (l6v2) edge (l5v2);
		        \draw [densely dotted]  (l5v1) edge (l4v2);
		        \draw (l5v2) edge (l4v2);
		        \draw (l4v1) edge (l3v1);
		        \draw [densely dotted]  (l4v2) edge (l3v1);
			\draw  [densely dotted] (l3v1) edge (l2v2);
			\draw (l2v1) edge (l1v2);
			\draw [densely dotted]  (l2v2) edge (l1v2);
		        \draw (l1v1) edge (z);
		        \draw [densely dotted]  (l1v2) edge (z);
		        \draw [densely dotted]  (z) edge (rt); 
		\end{tikzpicture}
	\end{gather}
\end{pr}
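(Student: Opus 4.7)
The plan is to identify the normal monomials in $\gr\ICom$ (tree monomials not divisible by any leading term) by a local analysis at each internal vertex, and then assemble the global spine shape.

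First I list the binary compositions $g_1 \circ_k g_2$ compatible with the colouring---using $\alpha\colon (1,1)\to 1$, $i\colon 2\to 1$, $r\colon (2,1)\to 2$, $l\colon (1,2)\to 2$---of which there are exactly fourteen. Removing the seven that appear among the leading terms \eqref{ICComLT}--\eqref{ICIn2LT}, namely $\alpha\circ_1\alpha$, $r\circ_1 r$, $r\circ_1 l$, $l\circ_1\alpha$, $l\circ_1 i$, $i\circ_1 r$, $i\circ_1 l$, leaves exactly seven admissible local compositions: $\alpha\circ_2\alpha$, $\alpha\circ_1 i$, $\alpha\circ_2 i$, $r\circ_2\alpha$, $r\circ_2 i$, $l\circ_2 r$, $l\circ_2 l$. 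From this list I read off the local shape at every internal vertex of a normal monomial: an $i$-vertex has a single leaf as its child; the left input of every $l$- or $r$-vertex is a colour-matching leaf; the left input of every $\alpha$-vertex is either a leaf or a unary $i(\text{leaf})$-subtree; and the right input of each spine vertex is restricted to the short lists implied by the seven allowed compositions (e.g.\ the right input of an $l$-vertex is a leaf, an $l$-subtree, or an $r$-subtree).

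A straightforward induction on the number of internal vertices, descending along the rightmost branch, then forces every normal monomial of arity at least two to have the spine structure in the figure: a possibly empty chain of $l$-vertices, optionally followed by a single $r$-vertex and a possibly empty chain of $\alpha$-vertices, with each off-spine input a leaf or $i(\text{leaf})$ and the spine terminating in either a leaf or $i(\text{leaf})$. The boundary cases---a pure $l$-tree ($N_l\ge 1$, no $r$, $N_\alpha=0$) or a pure $\alpha$-tree (root $\alpha$, $N_l=0$, no $r$)---fit the general picture by letting the relevant counts vanish. The shuffle condition finally fixes the leaf labels: the minimum label of the left subtree at each spine vertex must precede the minimum label of the right subtree, so the labels increase left-to-right along the spine exactly as displayed. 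The main work is organizational; once the seven admissible local compositions have been identified, no individual step is difficult.
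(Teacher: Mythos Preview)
Your proof is correct and follows essentially the same approach as the paper's: a local analysis of which two-vertex subtrees are forbidden by the leading terms, assembled into the global right-comb spine. Your version is more systematically organized (explicitly enumerating all fourteen colour-compatible binary compositions and striking out the seven forbidden ones) and you make explicit why the shuffle condition pins down the labels, whereas the paper argues the same constraints more informally by walking through the leading-term groups one at a time.
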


\begin{proof}

The presence of the terms \eqref{ICComLT} restricts the subtrees composed of $\alpha$'s to those growing rightwards (as in the case of the $\Com$ operad).

From \eqref{ICIn1LT} we deduce that $i$ can only be a leaf. The terms \eqref{ICMultLT1}  and \eqref{ICMultLT2} ensure that $l$-subtrees can only grow rightwards, and that we can't graft $\alpha$ upon $l$. Also we can't graft $l$ upon $r$, and, by \eqref{ICIn2LT}, $i$ upon $l$. Gathering this data together we prove the claim.
\end{proof}

Let's make some observations about a tree of the form \eqref{ICBigTree}. First, if it has the output of the second colour, it must have at least one leaf of the second colour. Second, given two numbers $n \ge 0$ and $m > 0$ and a colouring $\chi$ of the set $m + n$ of type $(n, m)$, there is exactly one tree of this type, with the output of the second colour, whose inputs are coloured with $\chi$. Namely, if the first element of the second colour in $\chi$ is not the last element of the set, it corresponds to the only vertex of type $r$ (and the rest of $\chi$ is acquired by grafting or not grafting $i$'s on $\alpha$'s); and if it is the last element in the set, the corresponding tree consist solely of $l$'s. 

Otherwise, if the output of the tree is of the first colour, it means that the tree is composed from $\alpha$'s and $\iota$'s, and there is no restriction for $m$ to be greater than zero (so, any $\chi$ is feasible).

Thus we conclude that for $n + m \ge 2$:
\begin{equation}
\label{eq::grICom}
\begin{array}{c}
\dim\ICom(n + m, \chi, \mathbf{\vdots}) \leq  \dim \gr \ICom (n + m, \chi, \mathbf{\vdots}) =1 ; \\
\dim\ICom(n + m, \chi, \mathbf{|}) \leq  \dim \gr \ICom (n + m, \chi, \mathbf{|}) =\begin{cases}
0, \text{ if } m=0, \\
1, \text{ if } m\geq 1
\end{cases}
\end{array}
\end{equation}
Note that one can easely construct an $\ICom$-algebra consisting of a commutative algebra $A$ and its ideal $I$ such that each operation of the type \eqref{ICBigTree} is different from zero, so our bounds are in fact tight what follows that the defining relations of the shuffle operad $\ICom$ constitute a Gr\"obner basis.
The generating series of $\ICom$ coincides with the generating series of $\gr\ICom$ and are easily computed thanks to~\eqref{eq::grICom} what finishes the proof of Theorem~\ref{thm::ICom}.

\begin{cor}
The operad $\ICom$ is Koszul and its Koszul dual operad admits a quadratic Gr\"obner basis.
\end{cor}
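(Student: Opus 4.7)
The plan is a direct appeal to Theorem~\ref{thm::PBW::Koszul}. Theorem~\ref{thm::ICom}, just established, exhibits a quadratic Gr\"obner basis for the ideal of relations of the shuffle coloured operad associated with $\ICom$ with respect to the path lexicographic ordering induced by $\alpha < i < l < r$. This matches precisely the hypothesis of Theorem~\ref{thm::PBW::Koszul}, whose conclusion yields both assertions of the corollary at once: first, the $2$-coloured operad $\ICom$ is Koszul; second, its coloured Koszul dual operad $\ICom^{!}$, generated by the dual operations $\alpha^{\vee}, i^{\vee}, l^{\vee}, r^{\vee}$, admits a quadratic Gr\"obner basis of relations with respect to the reverse admissible ordering $\succ_{op}$ on the free shuffle coloured operad $\cF(\alpha^{\vee}, i^{\vee}, l^{\vee}, r^{\vee})$.

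The argument is thus a one-line invocation and presents no substantive obstacle; the only point meriting any remark is that Theorem~\ref{thm::PBW::Koszul} was formulated in the shuffle setting, which is exactly the framework in which Theorem~\ref{thm::ICom} was proved (via the forgetful functor of \S\ref{FFuncDef}), so no translation between symmetric and shuffle formalisms is needed. If an explicit description of the Gr\"obner basis of $\ICom^{!}$ is desired, one reads it off from the proof of Theorem~\ref{thm::PBW::Koszul}: its leading quadratic monomials are, under the reversed ordering $\succ_{op}$, precisely the complement in the space of shuffle quadratic tree monomials of the leading monomials displayed in~\eqref{ICComLT}--\eqref{ICIn2LT}, and the basis elements themselves form the dual basis of the annihilator of the quadratic relations of $\ICom$.
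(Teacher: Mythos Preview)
Your proposal is correct and matches the paper's intent: the corollary is stated there without proof, as an immediate consequence of Theorem~\ref{thm::PBW::Koszul} applied to the quadratic Gr\"obner basis of Theorem~\ref{thm::ICom}. Your additional remarks about the explicit form of the dual basis are accurate elaborations of what the proof of Theorem~\ref{thm::PBW::Koszul} already records.
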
	

\subsection{$\AffHS$ operad}
\label{sec::AffHS}

In \cite{Merkulov} Merkulov introduced a notion of \emph{affine homogeneous space}, facilitating the study of deformation theory:

\begin{definition}
An \emph{affine homogeneous space} is a collection of data $(\frkg, \frkh, \langle \, , \, \rangle, \varphi)$ consisting of:
	\begin{itemize}
	\item a Lie algebra $\frkg$ with Lie bracket $[ \, , \, ]$;
	\item a vector space $\frkh$ with a $\frkg$-module structure $\langle \, , \, \rangle \, : \, \frkg \otimes \frkh \to \frkg$;
	\item a linear map $\varphi \, : \, \frkg \to \frkh$, satisfying the equation
		\[
		\varphi([a, b]) = \langle a, \varphi(b) \rangle - (-1)^{|a||b|} \langle b, \varphi (a) \rangle
		\]
	for any $a, b \in \frkg$.
	\end{itemize}
\end{definition}

The operad $\AffHS$ governing affine homogeneous spaces has three generators:

\begin{gather}
\label{AffHSGen}
	\unTreeDS{i}{,}
	\quadTreeSSS{\beta}{1}{2}{= \quad -}
	\quadTreeSSS{\beta}{2}{1}{,}
	\quadTreeDSD{m}{1}{2}{}
\end{gather}

subject to the following set of relations:

\begin{gather}
\label{AffHSLie}
	\cubLeftTreeSSSSS{\beta}{\beta}{1}{2}{3}{-}
	\cubLeftTreeSSSSS{\beta}{\beta}{1}{3}{2}{-}
	\cubRightTreeSSSSS{\beta}{\beta}{1}{2}{3}{}
\end{gather}

\begin{gather}
\label{AffHSMod}
	\cubLeftTreeDSDSS{m}{\beta}{1}{2}{3}{+}
	\cubRightTreeDSDSD{m}{m}{1}{2}{3}{-}
	\cubRightTreeDSDSD{m}{m}{2}{1}{3}{}
\end{gather}

\begin{gather}
\label{AffHSMor}
	\highYTreeDSSS{i}{\beta}{1}{2}{+}
	\JTreeDSDS{m}{i}{1}{2}{-}
	\JTreeDSDS{m}{i}{1}{2}{}
\end{gather}

The corresponding shuffle operad has four generators:

\begin{gather}
\label{AffHSGenSh}
	\unTreeDS{i}{,}
	\quadTreeSSS{\beta}{1}{2}{,}
	\quadTreeDSD{m}{1}{2}{,}
	\quadTreeDDS{n}{1}{2}{}
\end{gather}

We will employ a modification of QM-ordering (see \eqref{QMOrd}), in which $m$ and $n$ play the role of $x$, and $b$ and $i$ play the role of $y$. So our ordering will based of the ordering on monomials in the following algebra:

\[
	A = \Bbbk \langle m, n, \beta, i, q \rangle \Biggr/ \substack{mq - qm, \; \beta q - q \beta, \; \beta m - m\beta q, \\ nq - qm, \; \beta n - n\beta q, \\ iq - qi, \; im - miq, \\ in - niq}
\]

The monomials in $A$ have the following normal form: first comes an $(m, n)$-word of total degree $d_x$, then a $(\beta, i)$-word of total degree $d_y$ and then $q^{d_q}$. Before comparison we present all involved monomials in the normal form.

 To compare two monomials, we first compare their arities. If equal, we compare their $d_x$'s, the monomial with greater $d_x$ is \emph{smaller}. If equal, we compare the $(m, n)$-words lexicographically. If equal, compare the $d_y$'s, the monomial with greater $d_y$ is \emph{greater}. If equal, compare the $(\beta, i)$-words lexicographically. If equal, compare the $d_q$'s, the monomial with greater $d_q$ is \emph{greater}.
 
\begin{theo}
The operad $\AffHS$ admits a quadratic Gr\"obner basis with respect to 	the  aforementioned $QM$-ordering.
\end{theo}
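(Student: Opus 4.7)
The strategy mirrors the proof of Theorem~\ref{thm::ICom} for $\ICom$, but with substantially more bookkeeping. The first step is to unfold the three defining symmetric relations \eqref{AffHSLie}--\eqref{AffHSMor} into relations in the shuffle operad generated by $\{i, \beta, m, n\}$: for each defining relation one acts by every element of the appropriate symmetric group ($\Sigma_3$ for \eqref{AffHSLie} and \eqref{AffHSMod}, $\Sigma_2$ for \eqref{AffHSMor}), and then planarizes each image as a shuffle tree, following the procedure spelled out in \S\ref{FFuncComb}. Note that the relation \eqref{AffHSMod} (the module axiom) mixes the generators $m$ and $n$ because the two second-colour inputs get permuted, and \eqref{AffHSMor} likewise produces two shuffle relations involving both $m$ and $n$.

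The next step is to compute, for each such shuffle relation, its leading term under the QM-ordering described above. Because $m$ and $n$ play the role of $x$ while $\beta$ and $i$ play the role of $y$, and because the comparison first maximises the $x$-degree along each root-to-leaf path, the expected leading terms are the "$\beta$-on-top" monomials in \eqref{AffHSLie}, the "$m$-above-$\beta$" and "$n$-above-$\beta$" monomials in the images of \eqref{AffHSMod}, and the "$\beta$-above-$i$" monomial in \eqref{AffHSMor}. One has to check carefully, using the commutation rule $\beta m = m\beta q$, $\beta n = n\beta q$, $im = miq$, $in = niq$, that these are indeed the leading terms after writing every path-word in normal form.

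With the leading terms identified, the core of the proof is Buchberger's criterion: every S-polynomial coming from a small common multiple of two leading terms must reduce to zero modulo the set of shuffle relations. Here we invoke the Python script mentioned in the introduction, whose output enumerates all small common multiples of pairs of leading terms (including self-pairs) and verifies termwise reduction. Once this verification is done, Buchberger's criterion gives the Gröbner basis claim.

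Finally, to sanity-check (and to obtain the generating series and the Koszul corollary via Theorem~\ref{thm::PBW::Koszul}), one should count the monomials of $\gr\AffHS$ that avoid divisibility by any leading term. These admit a transparent normal form: a root-directed "spine" of $m$'s and $n$'s, followed by a Lie-type subtree in $\beta$'s with possibly an $i$ at certain leaves, constrained exactly as in the $\ICom$ analysis. Comparing this count to the dimensions of $\AffHS$ realised on a concrete affine homogeneous space (for instance taking $\frkg$ free Lie and $\frkh$ a free $\frkg$-module plus the cocycle $\varphi$) shows the bound is tight, so no further relations can appear in the Gröbner basis. The main obstacle is the sheer number of shuffle relations and their S-polynomials compared to the $\ICom$ case; identifying leading terms under the QM-ordering is subtle because the $q$-corrections interact with the lexicographic comparison, and this is precisely why the computation is carried out algorithmically rather than by hand.
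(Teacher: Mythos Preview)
Your direct Buchberger approach is valid---the paper even remarks before its proof that the $S$-polynomial reductions are ``not difficult''---but the paper deliberately takes a different route. After identifying the leading terms, instead of reducing $S$-polynomials it passes to the Koszul dual $\AffHS^{!}$: the monomials of $\gr\AffHS^{!}$ are common multiples of the leading terms, and one sees directly that $\dim\gr\AffHS^{!}(m,n,c)\le 1$ (nonzero only for $n=0$ with first-colour output, $n\le 1$ with second-colour output). A concrete $\AffHS^{!}$-algebra shows these bounds are tight, so $\AffHS^{!}$ has a quadratic Gr\"obner basis for the reversed ordering, and Theorem~\ref{thm::PBW::Koszul} then transfers this to $\AffHS$. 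The duality trick replaces all the $S$-polynomial bookkeeping by a short dimension count on the dual side; your approach is more computational but equally legitimate.

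Your sanity check, however, misdescribes the normal forms. The leading terms forbid $\beta$ grafted onto $m$, $n$, or $i$ (and your own phrase ``$m$-above-$\beta$'' for \eqref{AffHSMod} has the vertices swapped: $\beta$ is the upper vertex, $m$ or $n$ the root). Consequently no $\beta$ can occur in a second-colour-output normal form at all, so there is no ``Lie-type subtree in $\beta$'s with $i$ at certain leaves'' sitting above an $m,n$-spine. The correct picture, stated as a corollary in the paper, is $\AffHS\simeq(\Lie;\cF(m,i))$ as coloured collections: first-colour output gives right-comb $\Lie$-trees in $\beta$ alone, while second-colour output gives arbitrary trees in the \emph{free} suboperad on $m$ and $i$. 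This does not break your proof (the Buchberger verification stands on its own), but your proposed independent dimension check would not go through as written.
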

It is not difficult to show that all $S$-polynomials for the set of relations~\eqref{AffHSGen}-\eqref{AffHSGenSh} can be reduced to zero. However, we want to explain  another proof of this result below.

\begin{proof}	
The $QM$-ordering we defined leads to the following choice of the leading terms.

The relation \eqref{AffHSLie} yields:

\begin{gather}
	\label{AffHSLieLT}
	\cubLeftTreeSSSSS{\beta}{\beta}{1}{2}{3}{}
\end{gather}

The relation \eqref{AffHSMod} yields all the trees in the $S_3$ orbit of the first tree in the relation:

\begin{gather}
	\label{AffHSModLT}
	\cubLeftTreeDSDSS{m}{\beta}{1}{2}{3}{,}
	\cubRightTreeDDSSS{n}{\beta}{1}{2}{3}{,}
	\cubLeftTreeDSDSS{m}{\beta}{1}{3}{2}{}
\end{gather}

And the relation \eqref{AffHSMor} yields:
\begin{gather}
	\label{AffHSMorLT}
	\highYTreeDSSS{i}{\beta}{1}{2}{}
\end{gather}

Therefore, the element of the coloured Koszul dual operad $\AffHS^{!}$ are spanned by common multiples of the aforementioned leading monomials. What follows that 
\begin{equation}
\label{eq::AffHS::dim}
\begin{array}{c}
\dim\AffHS^{!}(m,n,|) \leq \dim\gr\AffHS^{!}(m,n,|) = 1, \text{ if } n=0; \\
\dim\AffHS^{!}(m,n,\vdots) \leq \dim\gr\AffHS^{!}(m,n,\vdots) = 1 \text{ if } n\leq 1.
\end{array}
\end{equation}
and $\dim\gr\AffHS^{!}(m,n,\vdots)$ equals zero in all other cases.
Using a particular algebra over the operad $\AffHS^{!}$ one can show that the left hand side in Inequalities~\eqref{eq::AffHS::dim} is bounded from below by $1$ and, therefore, $\AffHS^{!}$ admits a quadratic Gr\"obner basis. Theorem~\ref{thm::PBW::Koszul} implies that the same happens for $\AffHS$. 
\end{proof}

\begin{cor}
	The suboperad of $\AffHS$ generated by $m$ and $i$ is free and there is an isomorphism of coloured symmetric collections:
	$$\AffHS \simeq (\Lie; \cF(m,i)) \Rightarrow F_{\AffHS}(t_1,t_2) = \left(-\ln(1-t_1); \frac{t_1+t_2}{1-t_1}\right)$$
\end{cor}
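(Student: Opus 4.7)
The plan is to leverage the Gr\"obner basis established in the preceding theorem: its complement of leading monomials furnishes a linear basis of $\AffHS$, and it remains to inspect this basis sector by sector according to the output colour.

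First I would separate the normal monomials by their output colour. Since the only generator producing a $\frkg$-output is $\beta$, every normal monomial with output colour $1$ is a tree built solely from $\beta$-corollas. The restriction of the Gr\"obner basis to $\beta$-only trees is precisely the leading term in \eqref{AffHSLieLT} together with its shuffle orbit, which coincides with the usual shuffle Gr\"obner-basis leading term for the Jacobi identity defining the Lie operad. Hence the colour-$1$ part of $\AffHS$ is isomorphic to $\Lie$ as a symmetric collection, with character generating series $-\ln(1-t_1)$.

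Next, for operations with $\frkh$-output, I would use the leading terms \eqref{AffHSModLT} and \eqref{AffHSMorLT} to observe that $\beta$ is never permitted in a $\frkg$-input slot of $m$, $n$, or $i$. Consequently every normal monomial with $\frkh$-output is composed exclusively of $m$, $n$, $i$ corollas, with $\frkg$-leaves filling every $\frkg$-input. Moreover, since none of the defining relations of $\AffHS$ involves only $m$ and $i$, no non-trivial relation among them holds in $\AffHS$, so the suboperad they generate is free and may be identified with $\cF(m,i)$ as a coloured symmetric collection. To read off the generating series $H(t_1,t_2)$ of this sector, I would decompose a normal monomial by its root: it is either an $\frkh$-leaf, or $i$ applied to a $\frkg$-leaf, or $m$ applied to a $\frkg$-leaf and a smaller $\frkh$-output tree; this yields the functional equation $H = t_2 + t_1 + t_1 H$, hence $H = (t_1+t_2)/(1-t_1)$.

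The main subtlety to be handled carefully is the shuffle-to-symmetric bookkeeping: the shuffle presentation splits the single symmetric operation $m$ into two shuffle generators $m$ and $n$, and one must check that the resulting shuffle normal forms induce back to exactly the claimed symmetric collection $\cF(m,i)$, rather than to a larger or smaller $\Sigma$-module. This is essentially a character check on the induction $\Sigma_{p}\times\Sigma_{q}\uparrow\Sigma_{p+q}$ applied to the arities of $m$ and $i$, and it is verified once normal monomials are parametrised by the location (if any) of the unique $\frkh$-leaf along the right spine of the tree, together with the labelling of the $\frkg$-leaves.
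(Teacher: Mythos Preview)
Your approach is essentially the same as the paper's: both rest on the observation that every shuffle monomial in the free operad $\cF(m,i)$ (i.e.\ every tree built from $m$, $n$, $i$) is normal, since none of the leading terms \eqref{AffHSLieLT}, \eqref{AffHSModLT}, \eqref{AffHSMorLT} divides such a monomial. The paper states this in one sentence; you unpack it sector by sector and carry out the generating-series recursion explicitly, which is a welcome elaboration.

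One small point of phrasing: the sentence ``since none of the defining relations of $\AffHS$ involves only $m$ and $i$, no non-trivial relation among them holds in $\AffHS$'' is not a valid inference on its own --- consequences of the defining relations could in principle land inside the $(m,i)$-suboperad. The correct argument is the one you already gave in the preceding sentence: the normal monomials with $\frkh$-output are exactly all $(m,n,i)$-trees, and since these form a basis of $\cF(m,i)^f$, the map $\cF(m,i)\to\AffHS$ is injective. Once you have said that, the ``no defining relation involves only $m,i$'' remark is redundant; I would drop it to avoid suggesting a weaker logic than the one you are actually using.
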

\begin{proof}
Follows from the description of normal forms. For example,	
	each shuffle monomial in the free operad $\cF(m,i)$ is not divisible by any leading term of the given Gr\"obner basis.
\end{proof}	

\subsection{$\MLie$ operad}
The operad $\MLie$ has two generators:

\begin{gather}
\label{MLieGen}
	\quadTreeDDD{\beta}{1}{2}{= \; -}
	\quadTreeDDD{\beta}{2}{1}{,}
	\quadTreeSDS{d}{1}{2}{}
\end{gather}

subject to the following two relations:

\begin{gather}
\label{MLieJ}
	\cubLeftTreeDDDDD{\beta}{\beta}{1}{2}{3}{-}
	\cubLeftTreeDDDDD{\beta}{\beta}{1}{3}{2}{-}
	\cubRightTreeDDDDD{\beta}{\beta}{1}{2}{3}{}
\end{gather}

\begin{gather}
\label{MLieM}
	\cubLeftTreeSDSDD{d}{\beta}{1}{2}{3}{-}
	\cubRightTreeSDSDS{d}{d}{1}{2}{3}{+}
	\cubRightTreeSDSDS{d}{d}{2}{1}{3}{}
\end{gather}

Algebra over $\MLie$ is a pair of a Lie algebra $L$ and an $L$-module. 
The colouring of inputs/outputs matches the colouring of the inputs of the generators of the operad 
$\LP$ considered in the succeeding Example~\ref{sec::SC}, since the operad $\MLie$ is a suboperad of $\LP$.

The corresponding shuffle operad has three generators:

\begin{gather}
\label{MLieShGen}
	\quadTreeDDD{\beta}{1}{2}{,}
	\quadTreeSDS{d}{1}{2}{,}
	\quadTreeSSD{e}{1}{2}{=}
	\quadTreeSDS{d}{2}{1}{}
\end{gather}

\begin{theo}
	The operad $\MLie$ admits a quadratic Gr\"obner basis with respect to the $QM$-ordering with $d$ and $e$ playing the role of $x$, and $\beta$ playing the role of $y$.
\end{theo}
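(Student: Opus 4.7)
The plan is to follow exactly the strategy used for the $\AffHS$ operad above. First I pass to the shuffle coloured operad, whose generators are $\beta, d, e$ as listed in \eqref{MLieShGen}, and take the $S_3$-orbits of the Jacobi relation \eqref{MLieJ} and the module compatibility relation \eqref{MLieM}. Under the specified QM-ordering, in which $d$ and $e$ count as $x$-letters and $\beta$ counts as a $y$-letter, I expect the leading term of each shuffle Jacobi relation to be the left-comb $\beta\circ_1\beta$ with the appropriate leaf labelling (since all three monomials in Jacobi have the same $(d_x,d_y)=(0,2)$ and the lex comparison on the $y$-word reproduces the usual $\Lie$ leading term). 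For the shuffle orbit of \eqref{MLieM}, the term $d\circ_2\beta$ has $(d_x,d_y)=(1,1)$ while the two terms $d\circ_1 d$ have $(d_x,d_y)=(2,0)$; since larger $d_x$ means smaller, the leading term is the one with one $d$ sitting above $\beta$, and likewise in the orbit images involving $e$.

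Next I describe the normal monomials: shuffle trees whose $\beta$-subtrees do not contain a $\beta\circ_1\beta$ left comb, and such that no $\beta$ sits directly above the unique module-coloured input of any $d$ or $e$. Trees whose output is the Lie colour therefore consist purely of $\beta$-vertices in normal form for the $\Lie$ operad; trees whose output is the module colour consist of a single spine of $d/e$ vertices, each of whose Lie-coloured side branches are $\Lie$-normal forms. This gives the upper bounds
\[
\dim\gr\MLie(n,0,\mathbf{Lie}) \leq \dim\Lie(n), \qquad \dim\gr\MLie(n,1,\mathbf{Mod}) \leq n!\cdot\dim\Lie\text{-normal shape count},
\]
summing to generating series $(-\ln(1-t_1),\; t_2/(1-t_1))$.

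To show these bounds are tight, I would exhibit a concrete $\MLie$-algebra realising the expected dimension. The most economical choice is a free Lie algebra $L(V)$ together with its adjoint module $L(V)$ itself, or $V$ together with the free $U(L(V))$-module, in which the operations corresponding to distinct normal forms act by linearly independent multilinear maps; standard PBW for $\Lie$ then gives the matching lower bound. (Alternatively, since $\MLie$ is a suboperad of $\LP$ one can bound dimensions from below by inclusion into the known structure of $\LP$ once Section~\ref{sec::SC} is established.) The upper and lower bounds coincide, forcing every $S$-polynomial to reduce to zero, so the defining relations form a quadratic Gr\"obner basis; invoking Theorem~\ref{thm::PBW::Koszul} also yields the Koszul dual statement for free.

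The main obstacle is the interaction between Jacobi and module compatibility: the common multiple of $\beta\circ_1\beta$ with $d\circ_2\beta$ (or with $e\circ_1\beta$) produces a cubic $S$-polynomial living in the module-coloured output arity-$4$ component, and one must check that rewriting with the remaining relations lands in the span of the expected normal forms. The QM-ordering was designed precisely so that this reduction is governed by the $x$-degree filtration, pushing all ambiguities into the $\Lie$ suboperad where confluence is known. If one prefers to avoid the dimension-counting route, this S-polynomial computation is the only non-routine verification, and it is exactly the one that is handled by the Python script referred to in the appendix.
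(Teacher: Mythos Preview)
Your overall strategy---upper bound by counting normal shuffle monomials, lower bound by exhibiting an algebra on which they act independently---is correct and is genuinely different from what the paper does. The paper instead identifies the leading terms and then carries out an explicit Buchberger reduction of the single nontrivial $S$-polynomial, the one attached to the small common multiple $d(\beta(\beta(1,2),3),4)$ of the Jacobi relation and the module relation; all other $S$-polynomials lie in the $\Sigma_4$-orbit of this one. Your dimension-counting route avoids that computation entirely, at the cost of needing a separating algebra; the choice $M=U(L)$ for a free Lie algebra $L$ does the job, since the spine monomials evaluate on $(x_1,\dots,x_n,1)$ to the $n!$ noncommutative monomials $x_{\sigma(1)}\cdots x_{\sigma(n)}$, linearly independent by PBW. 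So both approaches succeed, and yours is closer in spirit to the $\ICom$ and $\AffHS$ arguments earlier in the paper.

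There are, however, two slips in your write-up that you should fix. First, the composition indices are swapped: in $d$ the Lie-coloured input is slot~$1$ and the module-coloured input is slot~$2$, so the monomials in \eqref{MLieM} are $d\circ_1\beta$ and $d\circ_2 d$; what you wrote ($d\circ_2\beta$, $d\circ_1 d$) are colour-incompatible. Second, and more importantly, your description of the module-output normal forms is wrong: since every instance of $\beta$ grafted into the Lie input of $d$ or $e$ is a leading term, the Lie-coloured side branches of the $d/e$-spine must be \emph{single leaves}, not arbitrary $\Lie$-normal trees. Your generating series $t_2/(1-t_1)$ is the one for single-leaf branches and is correct, so the error is only in the prose; but as written the verbal description and the series contradict each other, and the upper bound would fail with the description you gave.
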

\begin{proof}
 The $QM$-ordering leads to the following list of leading terms:

The relation \eqref{MLieJ} yields:
\[
	\cubLeftTreeDDDDD{\beta}{\beta}{1}{2}{3}{}
\]
And the relation \eqref{MLieM} yields:
\[
	\cubLeftTreeSDSDD{d}{\beta}{1}{2}{3}{,}
	\cubLeftTreeSDSDD{d}{\beta}{1}{3}{2}{,}	
	\cubRightTreeSSDDD{e}{\beta}{1}{2}{3}{}
\]
It is well known that the uncoloured $S$-polynomial corresponding to the small common multiple of two Jacobi identities can be reduced to zero. The remaining $S$-polynomial is assigned to the small common multiple of the Jacobi identity~\eqref{MLieJ} and~\eqref{MLieM}: 
We write the small common multiple of the Jacobi relation~\eqref{MLieJ} and module structure~\eqref{MLieM} as well as reductions of the corresponding $S$-polynomial using the language of composition of operations with numbers indexing outputs:
\[
d(B(B(1,2),3),4) :=	\quartLeftTreeSDSDDDD{d}{B}{B}{1}{2}{3}{4}{} 
\]
The corresponding $S$-polynomial $\mu$ is equal to
\begin{multline*} 
\mu:= d(1,4)\circ_1\left[B(B(1,2),3)- B(B(1,3),2) - B(1,B(2,3))\right] \\
 - \left[d(B(1,3),4)- d(1,d(3,4)) + e(d(1,4),3)\right]\circ_1 B(1,2) 
\end{multline*}
We underline all monomials that admits further reduction (rewritings) of the $S$-polynomial $\mu$: 
\begin{gather*}
\mu =  - \underline{d(B(B(1,3),2),4)} - \underline{d(B(1,B(2,3)),4)} + \underline{d(B(1,2),d(3,4))} - \underline{e(d(B(1,2),4),3)} = \\
  \stackrel{\eqref{MLieM}}{=}  
  - {d(B(1,3),d(2,4))} + \underline{e(d(B(1,3),4),2)} - \underline{d(1,d(B(2,3),4))} + e(d(1,4),B(2,3)) +
\\   + d(1,d(2,d(3,4))) - e(d(1,d(3,4)),2) - e(d(1,d(2,4)),3) + e(e(d(1,4),2),3) =  \\
  \stackrel{\eqref{MLieM}}{=}  - \underline{d(B(1,3),d(2,4))} + \underline{e(d(1,4),B(2,3))}  - e(d(1,d(2,4)),3) + e(e(d(1,4),2),3) \\  - e(e(d(1,4),3),2) + d(1,e(d(2,4),3)) =0
\end{gather*} 
All remaining $S$-polynomials corresponds to the action of symmetric group on the colourings of the latter one.
\end{proof}
\begin{cor}
	There is an isomorphism of coloured collections 
	$\MLie \simeq (\cF(d),\Lie)$, where by $\cF(d)$ we denote the free $2$-coloured operad generated
by a single element $\quadTreeSDS{d}{1}{2}{}$.
In particular, 
$$
\overrightarrow{F_{\MLie}}(t_1,t_2) = \left(\frac{t_1}{1 - t_2} + t_2 ; - \log (1 - t_2) \right)
$$
\end{cor}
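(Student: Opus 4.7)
The plan is to read off the normal forms of $\MLie$ from the Gröbner basis just established, separate them by output colour, and match them bijectively against shuffle monomials of $\Lie$ and $\cF(d)$ respectively. The leading terms we need to avoid are the Jacobi leading term $\beta\circ_1\beta$ together with the three module-axiom leading terms $d\circ_1\beta$ (in both admissible labellings) and $e\circ_2\beta$; every shuffle monomial not divisible by one of these is in the normal-form basis by Theorem~\ref{thm::PBW::Koszul}.

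First I would handle the colour-$c_2$ (Lie-output) piece. Since $\beta$ is the unique generator with dotted output, an induction on the depth of the tree shows that a shuffle monomial with dotted output uses only $\beta$-vertices: the root is $\beta$, whose two inputs are themselves dotted-output subtrees, hence again $\beta$-trees. Among such pure $\beta$-trees, the only leading-term obstruction is $\beta\circ_1\beta$, which coincides with the shuffle leading term of the Jacobi relation defining $\Lie$. Hence the colour-$c_2$ normal forms are exactly the shuffle monomials of $\Lie$, proving $F^{2}_{\MLie}(t_2)=F_{\Lie}(t_2)=-\log(1-t_2)$ and contributing $(n-1)!$ operations in arity $n$.

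Next I would handle the colour-$c_1$ (module-output) piece. The root must be $d$ or $e$; in either case the relevant leading term forbids $\beta$ from sitting on its dotted input. Because $\beta$ is the only dotted-output generator, this dotted input must be a leaf. Its solid input is itself a module-output normal form, so by induction the tree is a caterpillar: a spine of $d$'s and $e$'s, each carrying a single dotted leaf off the dotted input, terminating above in a solid leaf. These are precisely the shuffle monomials of the free two-coloured operad $\cF(d)$ generated by $\quadTreeSDS{d}{1}{2}{}$, whose shuffle version has $d$ and its transpose $e$ as generators and no relations. Counting: given any bijection of the $m+1$ leaves with $\{1,\dots,m+1\}$ and any choice of which leaf is solid, the shuffle condition at each spine vertex uniquely dictates whether it is $d$ or $e$, so there are $m!$ shuffle monomials for each coloring with one solid and $m$ dotted leaves, yielding $F^1_\MLie(t_1,t_2)=\sum_{m\geq 0}t_1 t_2^m=\tfrac{t_1}{1-t_2}$.

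Combining the two halves gives the claimed isomorphism of coloured collections $\MLie\simeq(\cF(d),\Lie)$ and the stated generating series. The main obstacle is really just the first inductive observation — that colour-$c_2$ output forces every internal vertex to be $\beta$, and the dotted input of any $d$ or $e$ in a normal form is forced to be a leaf — but these follow almost mechanically from inspecting which generators have which output colour and which leading terms block $\beta$ from sitting underneath $d,e$; no new ideas beyond the Gröbner basis theorem are required.
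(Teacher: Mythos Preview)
Your argument is the same as the paper's: read off the normal forms from the Gr\"obner basis, split by output colour, and identify the dotted-output piece with $\Lie$ and the solid-output piece with the free operad on $d$ (shuffle version: free on $d,e$). The paper's proof is a two-sentence sketch of exactly this.

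Two small remarks. First, the reference to Theorem~\ref{thm::PBW::Koszul} is misplaced: that theorem is about Koszulness and the Koszul dual; what you actually invoke is the standard consequence of having a Gr\"obner basis, namely that the shuffle monomials not divisible by any leading term form a linear basis of the quotient. Second, your computation gives $F^1_{\MLie}=\tfrac{t_1}{1-t_2}$, which is correct and agrees with the solid-output component of $\cF(d)$; the extra ``$+\,t_2$'' in the displayed formula appears to be a typo in the paper, since $\MLie$ has no operation with a single dotted input and solid output.
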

\begin{proof}
The set of leading monomials explains the structure of normal words in $\MLie$. What follows that $\MLie$ consists of two disjoint parts. The first part is the operad $\Lie$ generated by $\beta$, and the second part is the free shuffle coloured operad generated by $d$ and $e$. The latter is the shuffle operad assigned to the free symmetric coloured operad generated by a single element $d$.
\end{proof}

\subsection{$\HSC$ and $\LP$ operads}
\label{sec::SC}

In \cite{HoefelLivernet} Hoefel and Livernet provide a description of the operad $\SC$ (Swiss Cheese) and its zeroth homology $\HSC$. For the latter the authors proved its Koszulness and provide the Koszul dual operad -- the operad of Leibniz pairs $\LP$. We present a quadratic Gr\"obner basis for the latter operad and hence present another proof of the koszulness of $\HSC$ and $\LP$. Moreover, we computed the generating series of $\LP$ and $\HSC$.

\begin{definition}
A \emph{Leibniz pair} is a pair of a Lie algebra $L$ and an associative algebra $A$ together with a morphism of Lie algebras $L \to \Der (A)$.
\end{definition}

The operad $\LP$ has three generators:

\begin{gather}
\label{LPGen}
	\quadTreeDDD{\beta}{1}{2}{= \; -}
	\quadTreeDDD{\beta}{2}{1}{,}
	\quadTreeSSS{a}{1}{2}{,}
	\quadTreeSDS{d}{1}{2}{}
\end{gather}

subject to the following relation:

The Jacobi relation:

\begin{gather}
\label{LPJ}
	\cubLeftTreeDDDDD{\beta}{\beta}{1}{2}{3}{-}
	\cubLeftTreeDDDDD{\beta}{\beta}{1}{3}{2}{-}
	\cubRightTreeDDDDD{\beta}{\beta}{1}{2}{3}{}
\end{gather}

The associativity relation for $a$:

\begin{gather}
\label{LPA}
	\cubLeftTreeSSSSS{a}{a}{1}{2}{3}{-}
	\cubRightTreeSSSSS{a}{a}{1}{2}{3}{}
\end{gather}

The derivation relation:

\begin{gather}
\label{LPD}
	\cubRightTreeSDSSS{d}{a}{1}{2}{3}{-}
	\cubRightTreeSSSDS{a}{d}{2}{1}{3}{-}
	\cubLeftTreeSSSDS{a}{d}{1}{2}{3}{}
\end{gather}

The Lie algebra morphism relation:

\begin{gather}
\label{LPM}
	\cubLeftTreeSDSDD{d}{\beta}{1}{2}{3}{-}
	\cubRightTreeSDSDS{d}{d}{1}{2}{3}{+}
	\cubRightTreeSDSDS{d}{d}{2}{1}{3}{}
\end{gather}

The corresponding shuffle operad has five generators:

\begin{gather}
\label{LPShGen}
	\quadTreeDDD{\beta}{1}{2}{,}
	\quadTreeSSS{a}{1}{2}{,}
	\quadTreeSSS{b}{1}{2}{=}
	\quadTreeSSS{a}{2}{1}{,}
	\quadTreeSDS{d}{1}{2}{,}
	\quadTreeSSD{e}{1}{2}{=}
	\quadTreeSDS{d}{2}{1}{}
\end{gather}

\begin{theo}
	The defining relations of the operad $\LP$ forms a quadratic Gr\"obner bases of relations with respect to the $QM$ (partial) ordering with $a$ and $b$ being variables of the type $x$, $\beta$, $m$, and $n$ are $y$-type variables, and in addition $a > b$, $m > n$ lexicographically.
\end{theo}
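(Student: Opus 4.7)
The plan is to verify the Buchberger criterion directly: identify the leading monomials of the defining relations \eqref{LPJ}-\eqref{LPM} under the specified QM-ordering, enumerate the small common multiples of these leading monomials in the free shuffle operad, and reduce each resulting $S$-polynomial to zero modulo the defining relations. The argument closely parallels the proof for $\MLie$ displayed above, augmented by relations involving the associative generator $a$ and its derivation interaction with $d$.

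First I would identify the leading terms with respect to the QM-ordering. The relations \eqref{LPJ} and \eqref{LPA} are internal to the Lie and associative suboperads respectively, so the leading terms are the classical left-combed trees $\beta\circ_1\beta$ and $a\circ_1 a$, together with their shuffle $S_3$-orbits, as in the uncoloured $\Lie$ and $\As$ cases. The relations \eqref{LPM} and \eqref{LPD} mix $x$- and $y$-type vertices; reading off the $x$/$y$-degrees along root-to-leaf paths, the QM rule picks out $d\circ_1\beta$ as the leading term of \eqref{LPM} (exactly as in the $\MLie$ computation), and, using the tie-break $a>b$, $d>e$ on path words of the same quantum profile, it picks out $d\circ_1 a$ as the leading term of \eqref{LPD}.

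Next I would enumerate the critical pairs. They split into four families: Jacobi-Jacobi overlaps, reducing by the classical $\Lie$ computation; associativity-associativity overlaps, reducing by the classical $\As$ computation; Jacobi-LieMorphism overlaps, reducing verbatim as in the preceding $\MLie$ reduction; and cross-overlaps involving \eqref{LPD}. The new family consists of the overlap of \eqref{LPA} with \eqref{LPD} along a shared top $a$-vertex, producing rank-four monomials of the type $d(a(a(1,2),3),4)$ and their shuffle orbits; the overlap of \eqref{LPD} with itself along an $a$-vertex; and the overlap of \eqref{LPM} with \eqref{LPD} along a shared $d$-vertex at the root.

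For each cross-overlap I would write out the $S$-polynomial in the functional notation used in the $\MLie$ reduction and apply the rewriting rules one at a time: for the \eqref{LPA}-\eqref{LPD} overlap the derivation rule is applied twice to move $d$ past the inner product, after which associativity cancels the remainder; for the \eqref{LPM}-\eqref{LPD} overlap one applies Lie morphism followed by derivation. Every intermediate summand becomes further reducible, and after a few passes the expression collapses to zero, matching the pattern of the sample reduction displayed for $\MLie$. The main obstacle is bookkeeping rather than conceptual: shuffle permutations on four leaves, combined with the several colour choices, produce many orbit representatives, and the signs and the partial composition data $\circ_{i,\sigma}$ must be tracked with care. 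This is exactly the sort of computation the authors' Python script automates, and the appendix sample output for $\LieR$ supplies a concrete template; no algebraic identities beyond the four defining relations and the standard $\Lie$/$\As$ reductions are required.
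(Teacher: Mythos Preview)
Your overall strategy is exactly the paper's: verify the Buchberger criterion by listing the leading monomials and reducing the $S$-polynomials. However, several of the concrete details you give are wrong in a way that matters.

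The composition $d\circ_1 a$ that you name as the leading term of \eqref{LPD} does not exist in the coloured operad: the first input of $d$ carries the Lie colour, while $a$ has associative output. The actual leading term is $d\circ_2 a$, i.e.\ $d(1,a(2,3))$ (and its shuffle orbit), as the paper lists. The same confusion recurs when you write the small common multiple for \eqref{LPA}--\eqref{LPD} as $d(a(a(1,2),3),4)$; this is again colour-incompatible. The correct overlap has the $a$-tower on the \emph{right} input of $d$, and the paper works with $d(1,a(2,a(3,4)))$.

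Your enumeration of critical pairs also needs adjustment. There is no \eqref{LPD}--\eqref{LPD} small common multiple: every leading term of \eqref{LPD} has a $d$ or $e$ at the root and an $a$ or $b$ as its unique internal child, so two such trees cannot overlap nontrivially. As for the \eqref{LPM}--\eqref{LPD} overlap, the only candidate shares just the root $d$ (giving $d(\beta(1,2),a(3,4))$); this $S$-polynomial reduces trivially since the two rewritings act on disjoint branches, which is why the paper does not mention it and states that only the \eqref{LPD}--\eqref{LPA} pairs remain after the $\MLie$ reductions handle \eqref{LPJ}--\eqref{LPM}.

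Once these corrections are made, your sketch becomes the paper's proof.
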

\begin{proof}
The $S$-polynomials for relations $\eqref{LPM}$ and $\eqref{LPJ}$ can be reduced to zero in the same way as in the previous example. We are left to check the reducibility of the $S$-polynomials for $\eqref{LPD}$ and $\eqref{LPA}$. We provide the reduction of one of these polynomials associated with the small common multiple $d(1,a(2,a(3,4)))$, as they lie in a single $\Sg_n$-orbit.

 \begin{align*}&d(1,a(a(2,3),4)) - b(d(1,a(3,4)),2) - \underline{a(d(1,2),a(3,4))} \stackrel{\eqref{LPA}}{=}  \\ 
&\underline{d(1,a(a(2,3),4))} - \underline{b(d(1,a(3,4)),2)} - a(a(d(1,2),3),4) \stackrel{ \eqref{LPD} }{=}  \\ 
& - a(a(d(1,2),3),4) + b(d(1,4),a(2,3)) + \underline{a(d(1,a(2,3)),4)} - b(b(d(1,4),3),2)
 &  - b(a(d(1,3),4),2) \stackrel{\eqref{LPD}}{=}  \\ 
&b(d(1,4),a(2,3)) - b(b(d(1,4),3),2) - b(a(d(1,3),4),2) + \underline{a(b(d(1,3),2),4)} \stackrel{ \eqref{LPA} }{=}  \\ 
&\underline{b(d(1,4),a(2,3))} - b(b(d(1,4),3),2) \stackrel{ \eqref{LPA}}{=} 0\\ 
\end{align*}

\end{proof}	
The $QM$-ordering leads to the following choice of the leading terms.

The relation \eqref{LPJ} yields:

\[
	\cubLeftTreeDDDDD{\beta}{\beta}{1}{2}{3}{}
\]

The relation \eqref{LPA} yields:

\[
	\cubRightTreeSSSSS{a}{a}{1}{2}{3}{,}
	\cubLeftTreeSSSSS{a}{b}{1}{2}{3}{,}
	\cubRightTreeSSSSS{b}{b}{1}{2}{3}{,}
	\cubRightTreeSSSSS{a}{b}{1}{2}{3}{,}
	\cubRightTreeSSSSS{b}{a}{1}{2}{3}{,}
	\cubLeftTreeSSSSS{a}{b}{1}{3}{2}{}
\]

The relation \eqref{LPD} yields:

\[
	\cubRightTreeSDSSS{d}{a}{1}{2}{3}{,}
	\cubLeftTreeSSDSS{e}{a}{1}{3}{2}{,}
	\cubLeftTreeSSDSS{e}{b}{1}{2}{3}{,}
	\cubRightTreeSDSSS{d}{b}{1}{2}{3}{,}
	\cubLeftTreeSSDSS{e}{b}{1}{3}{2}{,}
	\cubLeftTreeSSDSS{e}{a}{1}{2}{3}{}
\]

And the relation \eqref{LPM} yields:

\[
	\cubLeftTreeSDSDD{d}{\beta}{1}{2}{3}{,}
	\cubLeftTreeSSDSD{e}{e}{1}{2}{3}{,}
	\cubLeftTreeSDSDD{d}{\beta}{1}{3}{2}{}	
\]

The choice of the leading terms in the derivation relation \eqref{LPD} ensures that any element of the operad can be rewritten in the following normal form: a two-level tree with the bottom level consists of vertices $a$ and $b$, and the top level consists of the vertices $d$, $e$, and $\beta$. 

From this observation and the generating relations of $\LP$ we conclude that $\LP = \As \circ \MLie$, as symmetric coloured collections, where $\As$ is the associative operad generated by $a$, the operad $\MLie$ was described in the previous example.

Now we can compute the generating series for $\LP$:

\begin{multline*}
{{\LP} \simeq {\As}\circ \MLie  \Rightarrow  
\overrightarrow{F_{\LP}}(t_1,t_2) = \overrightarrow{F_{\As}}\circ \overrightarrow{F_{\MLie}} = }
\\ = \left(\frac{t_1}{1-t_1}; t_2\right)\circ 
\left(\frac{t_1}{1 - t_2} + t_2 ; - \log{(1 - t_2)} \right) = \left( \frac{-t_1}{t_1 + t_2}; - \log (1 - t_2)\right).
\end{multline*}

\subsection{$\DCom$ operad}
\label{subSecDCom}
Pairs of the form $(A,D)$, consisting of a  commutative algebra $A$ and a space $D$ of its derivation are governed by the following two-coloured operad $\DCom$ generated by two generators of arity 2, $\alpha$ and $d$:	

\begin{gather}
\label{DComGen}
		\quadTreeSSS{\al}{1}{2}{=}
		\quadTreeSSS{\al}{2}{1}{,}
		\quadTreeSSD{d}{1}{2}{}
\end{gather}

with the relations for $\alpha$ being an associative commutative product and $d$ being the derivation of $\alpha$ (the Leibniz rule):

\begin{gather}
\label{DComAs}
	\cubLeftTreeSSSSS{a}{a}{1}{2}{3}{-}
	\cubRightTreeSSSSS{a}{a}{1}{2}{3}{}
%\end{gather}
\\
%\begin{gather}
\label{DComLeib}
	\cubLeftTreeSSDSS{d}{\al}{1}{2}{3}{=}
	\cubRightTreeSSSSD{\al}{d}{1}{2}{3}{+}
	\cubLeftTreeSSSSD{\al}{d}{1}{3}{2}{}
\end{gather}
This example is rather contrived, as such pairs fit more naturally in the uncoloured framework. We consider this two-coloured version with a view to use the computations for it in Example~\ref{subSecLR}.

\begin{theo}
	The $2$-coloured operad $\DCom$ admits a quadratic Gr\"obner basis with respect to the $QM$-ordering described in \S\ref{QMOrd} with $y = d, x = \alpha$.
	
There is an isomorphism of coloured symmetric collections $\DCom = \Com \circ \cF(d)$, where $\cF(d)$ is the free operad generated by $d$ and $\Com$ consists of operations of the first colour and the generating series $\overrightarrow{F_{\DCom}}(t_1,t_2)$ is equal to $\left(e^{\frac{t_1}{1-t_2}},t_2\right)$.
\end{theo}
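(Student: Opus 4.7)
The plan is to follow the strategy used in the previous examples. First I would pass to the shuffle operad: $\alpha$ remains a single shuffle generator, while the mixed-colour $d$ of type $(S,D)\to S$ produces an additional shuffle generator $e$ of type $(D,S)\to S$ coming from the transposition of its inputs. The relation~\eqref{DComAs} contributes the two standard shuffle relations of $\Com$, and the $\Sg_3$-orbit of the Leibniz rule~\eqref{DComLeib} contributes three shuffle relations, one for each position of the $D$-coloured input in $\{1,2,3\}$; the positions $1$ and $2$ need to be rewritten using $e$ so that the shuffle condition is satisfied.

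Next I would identify the leading terms under the QM-ordering with $x=\alpha$ and $y=d$, extended so that $e$ is also treated as a $y$-variable. For $\alpha$-associativity the leading terms are the two left-comb monomials $\alpha\circ_1\alpha$, exactly as for $\Com$. For each shuffle Leibniz relation the leading term is the tree with $d$ (respectively $e$) at the root and the $\alpha$-subtree sitting above it: along a root-to-leaf path a $yx$-word normalises to $xyq$ via the QM-commutation, and the comparison rule makes this strictly larger than the $xy$-word coming from an $\alpha$-on-top configuration. The verification that every $S$-polynomial reduces to zero is then routine and is handled by the Python script, in the same spirit as the explicit reduction carried out above for $\MLie$; together with Theorem~\ref{thm::PBW::Koszul} this yields the Gr\"obner basis and also the Koszulness of $\DCom$ and of its Koszul dual operad.

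Finally I would read off the normal forms from the leading terms: a monomial in normal form has a right-comb of $\alpha$'s at the root (as forced by the $\Com$-type leading terms), and above every $\alpha$-leaf sits either an external $S$-leaf or a subtree built only from $d$ and $e$ (the remaining leading terms forbid any $\alpha$ as an $S$-child of a $d$ or an $e$). Such $d/e$-subtrees are exactly the monomials of the free coloured operad $\cF(d)$, which exhibits the stated isomorphism $\DCom\simeq \Com\circ\cF(d)$ of coloured symmetric collections. A direct count gives $\dim\cF(d)(n,\chi,S)=(n-1)!$ when $\chi$ has one $S$-input and $(n-1)$ $D$-inputs, so $F^1_{\cF(d)}(t_1,t_2)=t_1/(1-t_2)$ and $F^2_{\cF(d)}=t_2$, and the plethystic substitution into the $\Com$ generating series gives $\overrightarrow{F_{\DCom}}(t_1,t_2)=(e^{t_1/(1-t_2)},t_2)$. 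The main obstacle is the bookkeeping for the $\Sg_3$-orbit of the Leibniz rule in the shuffle setting --- tracking which of $d$ or $e$ appears in each descendant of an $S$-polynomial after the $yx=xyq$ normalisation --- which is precisely the step benefiting from algorithmic verification.
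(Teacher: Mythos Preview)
Your approach is essentially the paper's own: pass to the shuffle operad with the extra generator $e$, identify the leading terms under the QM-ordering, reduce the $S$-polynomials (the paper writes out one representative reduction explicitly, for the small common multiple $e(1,\alpha(\alpha(2,4),3))$, rather than deferring entirely to the script), and then read off the normal forms to get $\DCom \simeq \Com\circ\cF(d)$ and the generating series.

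One concrete slip to fix: under the QM-ordering with $x=\alpha$ the two $\Com$ leading terms are \emph{not} both left combs. A path word $\alpha\alpha=x^2$ is \emph{smaller} than $\alpha=x^1$ in this ordering (larger $x$-exponent means smaller), so comparing the path-vectors one finds that the leading terms are $\alpha(\alpha(1,3),2)$ and the right comb $\alpha(1,\alpha(2,3))$; the surviving normal form is the left comb $\alpha(\alpha(1,2),3)$, not a right comb. This mismatch is harmless for the final isomorphism (either comb gives $\Com$), but your stated leading terms and normal form are inconsistent with the specified ordering, and would also feed the wrong small common multiples to the $S$-polynomial check.
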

\begin{proof}
The corresponding shuffle operad has three generators:

	\begin{gather}
	\label{DComShGen}
		\quadTreeSSS{\al}{1}{2}{,}
		\quadTreeSSD{d}{1}{2}{,}
		\quadTreeSDS{e}{1}{2}{=}
		\quadTreeSSD{d}{2}{1}{}
	\end{gather}

The following list of leading terms do appear with respect to the aforementioned $QM$-ordering:

\begin{gather}
	\cubLeftTreeSSSSS{\al}{\al}{1}{3}{2}{,}
	\cubRightTreeSSSSS{\al}{\al}{1}{2}{3}{,}
	\cubRightTreeSDSSS{e}{\al}{1}{2}{3}{,}
	\cubLeftTreeSSDSS{d}{\al}{1}{3}{2}{,}
	\cubLeftTreeSSDSS{d}{\al}{1}{2}{3}{}
\end{gather}
Thus, there are the $S$-polynomials of the first colour that deal with the commutative associative product and are known to be reducible to $0$ and there is an $S$-polynomial associated with the small common multiple $e(1,\alpha(\alpha(2,4),3))$ of Relations~\eqref{DComAs} and \eqref{DComLeib}:

 \begin{align*}&e(1,\alpha(\alpha(2,3),4)) - \alpha(e(1,\alpha(2,4)),3) - \underline{\alpha(e(1,3),\alpha(2,4))} \stackrel{ \eqref{DComAs} }{=}  \\ 
&\underline{e(1,\alpha(\alpha(2,3),4))} - \underline{\alpha(e(1,\alpha(2,4)),3)} - \alpha(\alpha(e(1,3),2),4) \stackrel{ \eqref{DComLeib} }{=}  \\ 
& - \alpha(\alpha(e(1,3),2),4) + \underline{\alpha(e(1,\alpha(2,3)),4)} + \alpha(e(1,4),\alpha(2,3)) - \alpha(\alpha(e(1,2),4),3)\\ 
 &  - \alpha(\alpha(e(1,4),2),3) \stackrel{ \eqref{DComLeib} }{=}  \\ 
&\alpha(e(1,4),\alpha(2,3)) - \underline{\alpha(\alpha(e(1,2),4),3)} - \alpha(\alpha(e(1,4),2),3) + \alpha(\alpha(e(1,2),3),4) \stackrel{ \eqref{DComAs} }{=}  \\ 
&\underline{\alpha(e(1,4),\alpha(2,3))} - \alpha(\alpha(e(1,4),2),3) \stackrel{ \eqref{DComAs} }{=}  0\\ 
\end{align*}

All other $S$ polynomials differ from this one by the action of symmetric group what affects the replacement $d$ by $e$.

It is immediate to see that the elements of $\DCom$ have the following normal form: a leftward growing tree of $\al$'s with arbitrary compositions of $d$ and $e$ plugged into it. It means that as symmetric coloured collections $\DCom = \Com \circ \cF(d)$, where $\cF(d)$ is the free operad generated by $d$ and $\Com$ consists of operations of the first colour.

Now we can compute the generating series for $\DCom$:
$$
\overrightarrow{F_{\DCom}}(t_1,t_2) = (e^{t_1}-1;t_2) \circ (\frac{t_1}{1-t_2};t_2) = \left(e^{\frac{t_1}{1-t_2}}-1; t_2\right)
$$
\end{proof}

\subsection{$\LieR$ operad}
\label{subSecLR}
Following~\cite{Rinehart}, ~\cite{KordonLambre} we say that a \emph{Lie-Rinehart} algebra is a pair $(S, L)$ of a commutative algebra $S$ and a Lie algebra $L$, such that $L$ acts on $S$ by derivations, $L$ is an $S$ module, and the following relations hold:
\[
(s \alpha) (t) = s \cdot (\alpha(t)),
\hspace{35pt}
[\alpha, s\beta] = s [\alpha, \beta] + \alpha(s) \beta;
\]
for $s, t \in S$, $\alpha, \beta \in L$.
With each algebraic variety or smooth manifold $X$ one can assign a \emph{Lie-Rinehart} algebra consisting of the commutative algebra of functions on $X$ and the Lie algebra of vector fields on $X$.

We define the operad $\LieR$ as the coloured symmetric operad on two colours generated by four operations:

	\begin{gather}
	\label{LRGen}
		\quadTreeSSS{\al}{1}{2}{=}
		\quadTreeSSS{\al}{2}{1}{,}
		\quadTreeSSS{\beta}{1}{2}{= \quad -}
		\quadTreeSSS{\beta}{2}{1}{,}
		\quadTreeSSD{d}{1}{2}{,}
		\quadTreeDSD{m}{1}{2}{}
	\end{gather}

subject to the following list of relations.

The associativity relation for $\al$:

\begin{gather}
\label{LRAs}
	\cubLeftTreeSSSSS{a}{a}{1}{2}{3}{-}
	\cubRightTreeSSSSS{a}{a}{1}{2}{3}{}
\end{gather}

The Jacobi relation for $\beta$:
\begin{gather}
\label{LRJac}
	\cubLeftTreeDDDDD{\beta}{\beta}{1}{2}{3}{-}
	\cubLeftTreeDDDDD{\beta}{\beta}{1}{3}{2}{-}
	\cubRightTreeDDDDD{\beta}{\beta}{1}{2}{3}{}
\end{gather}

The Leibniz rule:

\begin{gather}
\label{LRLeib}
	\cubLeftTreeSSDSS{d}{\al}{1}{2}{3}{=}
	\cubRightTreeSSSSD{\al}{d}{1}{2}{3}{+}
	\cubLeftTreeSSSSD{\al}{d}{1}{3}{2}{}
\end{gather}

The relation describing the morphism of Lie algebras $L \to \Der (S)$:
\begin{gather}
\label{LRMor}
	\cubRightTreeSSDDD{d}{\be}{1}{2}{3}{=}
	\cubLeftTreeSSDSD{d}{d}{1}{3}{2}{-}
	\cubLeftTreeSSDSD{d}{d}{1}{2}{3}{}
\end{gather}

The relation stating that $L$ is an $S$-module:

\begin{gather}
\label{LRSMod}
	\cubLeftTreeDSDSS{m}{\al}{1}{2}{3}{=}
	\cubRightTreeDSDSD{m}{m}{1}{2}{3}{}
\end{gather}

And two relation specific for the Lie-Rinehart algebras:

\begin{gather}
\label{LRA}
	\cubRightTreeSSDSD{d}{m}{1}{2}{3}{=}
	\cubLeftTreeSSSSD{\al}{d}{1}{3}{2}{}
\end{gather}

\begin{gather}
\label{LRB}
	\cubLeftTreeDDDSD{\be}{m}{1}{3}{2}{=}
	\cubRightTreeDSDDD{m}{\be}{1}{2}{3}{+}
	\cubLeftTreeDSDSD{m}{d}{1}{2}{3}{}
\end{gather}

The corresponding shuffle operad has six generators:

	\begin{gather}
	\label{LRShGen}
		\quadTreeSSS{\al}{1}{2}{,}
		\quadTreeSSS{\beta}{1}{2}{,}
		\quadTreeSSD{d}{1}{2}{,}
		\quadTreeSDS{e}{1}{2}{=}
		\quadTreeSSD{d}{2}{1}{,}
		\quadTreeDSD{m}{1}{2}{,}
		\quadTreeDDS{n}{1}{2}{=}
		\quadTreeDSD{m}{2}{1}{}
	\end{gather}

\begin{theo}	
\begin{itemize}
\item 
	The operad $\LieR$ admits a quadratic Gr\"obner basis;
\item	
The $2$-coloured symmetric collection $\LieR$ is isomorphic to the composition  $N_m\circ(\DCom^1;\Lie)$. Here $\DCom^1$ is the subset of $\DCom$ spanned by operations with the output of the first (straight) colour and  $N_m$ is a nilpotent quadratic operad generated by a single element $\quadTreeDSD{m}{1}{2}{}$ subject to the relation $\cubRightTreeDSDSD{m}{m}{1}{2}{3}{=0}.$
\end{itemize}	
\end{theo}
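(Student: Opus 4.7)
The proof will follow the same template as the earlier examples in Section~\ref{secExamples}. First, I would pass to the associated shuffle coloured operad on the six generators $\alpha, \beta, d, e, m, n$ listed in~\eqref{LRShGen}, translating the $\Sigma_n$-orbits of the eight defining relations~\eqref{LRAs}--\eqref{LRB} into their shuffle avatars. Then I would choose a $QM$-ordering (in the sense of~\S\ref{QMOrd}) extending the one that worked for $\DCom$ in~\S\ref{subSecDCom}: the $x$-variables will be $m, n, d, e$ (the pieces supplying the ``module'' and derivation structures) and the $y$-variables will be $\alpha, \beta$ (the binary multiplications), with a secondary lexicographic tie-breaker $m > n$, $d > e$.

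Under this ordering the expected list of leading monomials is: $\beta(\beta(1,2),3)$ from Jacobi~\eqref{LRJac}; $\alpha(\alpha(1,2),3)$ from associativity~\eqref{LRAs}; $d(\alpha(1,2),3)$ (together with its $\Sigma_3$-orbit in the shuffle operad) from the Leibniz rule~\eqref{LRLeib}; $d(1,\beta(2,3))$ from the Lie morphism relation~\eqref{LRMor}; $m(\alpha(1,2),3)$ from the module axiom~\eqref{LRSMod}; $d(m(1,3),2)$ from~\eqref{LRA}; and $\beta(m(1,3),2)$ from~\eqref{LRB}. Verifying that these are the largest monomials in each relation is a routine exercise with the $QM$-rule and is the first sanity check.

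The main obstacle is the $S$-polynomial reduction. The leading monomials listed above have a great many small common multiples, and for each of them one must exhibit a rewriting to zero modulo the quadratic relations. The calculations are entirely mechanical but there are too many to do by hand cleanly; this is precisely where the \texttt{Python} script advertised in the Introduction is essential, and a sample of its output for the $\LieR$ operad is promised in the appendix. The cases that will require the most care are the small common multiples obtained from pairing \eqref{LRA}--\eqref{LRB} with \eqref{LRSMod}, \eqref{LRLeib} and \eqref{LRMor}, since they mix the $m$/$n$ generators with both the commutative and the Lie structures, so that reductions have to alternate between all four families of rewriting rules. Once no new elements are produced, Theorem~\ref{thm::PBW::Koszul} can be invoked and the first bullet is established.

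For the second bullet one analyses the normal monomials, i.e.\ the shuffle trees in $\cF(\alpha,\beta,d,e,m,n)$ divisible by none of the leading terms above. The forbidden patterns force any internal vertex of type $\alpha,\beta,d$ or $e$ to lie strictly above every internal vertex of type $m$ or $n$; inside the ``top'' portion the restrictions coincide, after colouring, with the leading-term restrictions for $\DCom^1$ on first-coloured leaves and for $\Lie$ on second-coloured leaves (these have already been computed in~\S\ref{subSecDCom} and in the standard Gröbner basis for $\Lie$). On the other hand, among the normal trees built only from $m$ and $n$, the nilpotency monomial coming from the composite of \eqref{LRSMod} with \eqref{LRA}--\eqref{LRB} leaves exactly the operations constituting $N_m$. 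Stitching the two pieces together gives the isomorphism of $I$-coloured symmetric collections $\LieR \simeq N_m \circ (\DCom^1;\Lie)$, and from this, together with the generating series of $\DCom^1$ and $\Lie$ already in hand, one reads off the generating series of $\LieR$.
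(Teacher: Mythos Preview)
Your overall strategy is right, but the ordering you propose does not do what you claim, and this is a genuine gap rather than a matter of taste.

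With $m,n,d,e$ as $x$-variables and $\alpha,\beta$ as $y$-variables, the rule of~\S\ref{QMOrd} (more $x$'s $\Rightarrow$ smaller) makes the $\alpha$-headed side of the Leibniz rule~\eqref{LRLeib} and of~\eqref{LRA} the \emph{larger} one: for instance in $d(1,m(2,3))=\alpha(d(1,3),2)$ the first path words are $d$ versus $\alpha d$, and $\alpha d>d$ since they have equal $x$-degree but the second has positive $y$-degree. So your ordering yields leading terms $\alpha(d(\ldots),\ldots)$ for both~\eqref{LRLeib} and~\eqref{LRA}, not the $d(\alpha(\ldots),\ldots)$ and $d(m(\ldots),\ldots)$ you list. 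Likewise for~\eqref{LRSMod} your ordering gives $m(\alpha(1,2),3)$ as leading, whereas the decomposition you want needs the $m(1,m(2,3))$ side to be leading: otherwise towers $m\circ m\circ\cdots$ survive as normal monomials and the bottom layer is not $N_m$.

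The paper fixes this with a \emph{three-tier} QM-ordering: $\alpha$ alone is ``superheavy'', $d,e$ are ``heavy'', and $\beta,m,n$ are ``light'', implemented via the quotient algebra
\[
A=\Bbbk\langle \alpha,d,e,m,n,\beta,q\rangle\big/\bigl(d\alpha=\alpha dq,\ e\alpha=\alpha eq,\ m\alpha=\alpha mq,\ \ldots,\ \beta d=d\beta q,\ \ldots\bigr),
\]
with normal form $w_\alpha\,w_{d,e}\,w_{m,n,\beta}\,q^{d_q}$ and comparison by $d_\alpha$ first (smaller is larger), then $d_{(d,e)}$ (smaller is larger), then $d_{(m,n,\beta)}$ (larger is larger), then $d_q$. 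This is what forces the leading terms to be the $mm,mn,\ldots$ monomials for~\eqref{LRSMod}, the $d\text{-on-top}$ monomials for~\eqref{LRLeib}, and the $\beta(m(\ldots),\ldots)$ monomials for~\eqref{LRB}, from which the normal-form picture (one $m$ or $n$ at the root, then a $\DCom$-block and a $\Lie$-block on top) and hence the isomorphism $\LieR\simeq N_m\circ(\DCom^1;\Lie)$ follow. Your two-tier split cannot separate $\alpha$ from $d,e$ and simultaneously separate $d,e$ from $m,n,\beta$, which is exactly what the argument needs.
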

\begin{proof}
In this example we employ a further modification of a QM-ordering. We divide the generators into three groups: light ($\be$, $n$, $m$), heavy ($d$, $e$), and superheavy ($\al$). The light generators play the role of $y$, the heavy ones play the role of $x$, and the superheavy play the role of $x$ relatively to the heavy ones. Namely, we base our ordering on the ordering of the monomials in the algebra $A$ defined as:
\[
	A = \Bbbk \langle \al, d, e, m, n, \be, q \rangle \Biggr/ \substack{\al q - q \al, \; d \al - \al d q, \; e \al - \al e q,
\\ m \al - \al m q, \; n \al - \al n q, \; \be \al - \al \be q,
\\ d q - q d, \; m d - d m q, \; nd - dnq, \; \be d - d \be q,
\\  e q - q e, \; m e - e m q, \; ne - enq, \; \be e - e \be q,
\\ mq - qm, \; nq - qn, \; \be q - q \be
}
\]

A word in $A$ has the following normal form: 
\[
w = w_{\al} w_{d,e} w_{m, n, \be} q^{d_q},
\]
where  $w_{\al}$ is an $\al$-word of degree $d_{\al}$, $w_{d,e}$ is a $(d, e)$-word of degree $d_{(d,e)}$, and $w_{m, n, \be}$ is an $(m, n, \be)$-word of degree $d_{(m, n, \be)}$. To compare two such words, first compare $d_{\al}$, the word with smaller $d_{\al}$ is greater. If equal, the word with smaller  $d_{(d,e)}$ is greater. If equal, the word with greater  $d_{(m, n, \be)}$ is greater. If equal, the word with greater $d_q$ is greater.

This ordering leads to the following choice of the leading terms:
The relation \eqref{LRAs} yields:

\begin{gather}
	\cubLeftTreeSSSSS{\al}{\al}{1}{3}{2}{,}
	\cubRightTreeSSSSS{\al}{\al}{1}{2}{3}{}
\end{gather}

The relation \eqref{LRJac} yields:

\begin{gather}
	\cubLeftTreeDDDDD{\be}{\be}{1}{2}{3}{}
\end{gather}

The relation \eqref{LRLeib} yields:

\begin{gather}
	\cubRightTreeSDSSS{e}{\al}{1}{2}{3}{,}
	\cubLeftTreeSSDSS{d}{\al}{1}{3}{2}{,}
	\cubLeftTreeSSDSS{d}{\al}{1}{2}{3}{}
\end{gather}

The relation \eqref{LRSMod} yields:

\begin{gather}
	\cubRightTreeDSDSD{m}{m}{1}{2}{3}{,}
	\cubLeftTreeDDSSD{n}{m}{1}{3}{2}{,}
	\cubRightTreeDSDDS{m}{n}{1}{2}{3}{,}
	\cubLeftTreeDDSDS{n}{n}{1}{2}{3}{,}
	\cubLeftTreeDDSDS{n}{n}{1}{3}{2}{,}
	\cubLeftTreeDDSSD{n}{m}{1}{2}{3}{}
\end{gather}

The relation \eqref{LRMor} yields:

\begin{gather}
	\cubRightTreeSSDDD{d}{\be}{1}{2}{3}{,}
	\cubLeftTreeSDSDD{e}{\be}{1}{3}{2}{,}
	\cubLeftTreeSDSDD{e}{\be}{1}{2}{3}{}
\end{gather}

The relation \eqref{LRA} yields:

\begin{gather}
	\cubLeftTreeSDSSD{e}{m}{1}{2}{3}{,}
	\cubLeftTreeSDSDS{e}{n}{1}{2}{3}{,}
	\cubRightTreeSSDDS{d}{n}{1}{2}{3}{,}
	\cubLeftTreeSDSSD{e}{m}{1}{3}{2}{,}
	\cubRightTreeSSDSD{d}{m}{1}{2}{3}{,}
	\cubLeftTreeSDSDS{e}{n}{1}{3}{2}{}
\end{gather}

And the relation \eqref{LRB} yields:

\begin{gather}
	\cubRightTreeDDDSD{\be}{m}{1}{2}{3}{,}
	\cubLeftTreeDDDSD{\be}{m}{1}{3}{2}{,}
	\cubLeftTreeDDDDS{\be}{n}{1}{2}{3}{,}
	\cubRightTreeDDDDS{\be}{n}{1}{2}{3}{,}
	\cubLeftTreeDDDDS{\be}{n}{1}{3}{2}{,}
	\cubLeftTreeDDDSD{\be}{m}{1}{2}{3}{}
\end{gather}

Note that for every relation excluding the Jacobi identity for $\beta$ and associativity relations for $\alpha$ all the leading terms constitute the entire $\Sigma_3$-orbit acting on different colourings of inputs. This observation shortens the number of $S$-polynomials whose reductions one has to verify.
We work out all reductions (one for each $\Sigma_3$-orbit) in the Appendix~\ref{sec::LR::computations}. 

Contemplating on the choice of the leading terms, one can conclude, that the elements of the operad $\LieR$ have the following normal form:
\begin{itemize}
	\item On the first level they have one vertex of type $n$ or $m$ (or none of those)
	\item Two blocks can be grafted on $n$ or $m$: block consisting of $\alpha$'s ($\Com$-block) and block consisting of $\beta$'s ($\Lie$-block).
	\item Additionally, any free input of the $\Com$-block may be decorated with an arbitrary $(d,e)$-tree.
\end{itemize}
	\begin{gather}
		\begin{tikzpicture}[scale=0.6]
		    \node[rectangle,draw, minimum size=1pt] (z) {$m$ or $n$};
		    \node[rectangle, draw, minimum size=1pt] (l1v1) at (-4,2) {$\Com$-block} ;
		    \node[rectangle, draw, minimum size=1pt] (l1v2) at (0,2) {$(d,e)$-tree} ;
		    \node[rectangle,draw, minimum size=1pt] (l1v3) at (4,2) {$\Lie$-block} ;
		    \node[rectangle,draw, minimum size=1pt] (l2v1) at (-6,4) {$(d,e)$-tree} ;
		    \node[rectangle,draw, minimum size=1pt] (l2v2) at (0,4) {$(d,e)$-tree} ;
		    \coordinate (rt) at (0,-1);
			\draw (l2v1) edge (l1v1);
			\draw (l2v2) edge (l1v1);
		        \draw (l1v1) edge (z);
		        \draw (l1v3)[dotted] edge (z);
		        \draw (l1v2) edge (l1v1);
		        \draw[dotted]  (z) edge (rt); 
		\end{tikzpicture}
	\end{gather}

So the elements of the corresponding symmetric operad have the form:

\begin{gather}
		\begin{tikzpicture}[scale=0.6]
		    \node[ver] (z) {$m$};
		    \node[rectangle, draw, minimum size=1pt] (l1v1) at (-2,2) {$\DCom$-block} ;
		    \node[rectangle,draw, minimum size=1pt] (l1v2) at (2,2) {$\Lie$-block} ;
		    \coordinate (rt) at (0,-1);
		        \draw (l1v1) edge (z);
		        \draw (l1v2)[dotted] edge (z);
		        \draw[dotted]  (z) edge (rt); 
		\end{tikzpicture}
\end{gather}
Note that the action of the permutation $\sigma$ on the set of inputs of a normal word will give again a normal word whenever $\sigma$ will not interact with the Lie block. On the other hand, the action of symmetric group on the operad Lie is also well known. Thus, we conclude that the description of the normal words implies the isomorphism of the coloured symmetric collections $\LieR$ and the composition 
$N_m\circ (\DCom^1,\Lie)$. Here we denote by $N_m$ the $2$-coloured operad generated by a single operation $m$ in arity $2$ and all non-trivial compositions are equal to zero. The coloured symmetric collection assigned with $N_m$ consists of $m$ in arity $2$, two identity elements of two colours in arity $1$, and $0$ in all other arities.  

This allows us to compute the generating series:
 \[
 \overrightarrow{F_{\LieR}}(t_1, t_2) =(t_1; t_2+ t_1 t_2)\circ  \left(e^{\frac{t_1}{1-t_2}} - 1; - \log (1 - t_2)\right) = \left( e^{\frac{t_1}{1-t_2}} - 1;  - \log (1 - t_2)\cdot e^{\frac{t_1}{1-t_2}} \right)
\]
\end{proof}

\begin{cor}
	The map of coloured operads $\DCom\rightarrow\LieR$ is an embedding.
\end{cor}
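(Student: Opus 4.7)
The plan is to prove injectivity of the natural morphism of coloured operads $\phi\colon\DCom\to\LieR$ (sending $\alpha\mapsto\alpha$, $d\mapsto d$) by comparing the Gr\"obner-basis normal forms produced in the two preceding theorems.

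First, I would check that $\phi$ is well defined: the defining relations \eqref{DComAs} and \eqref{DComLeib} of $\DCom$ coincide literally with the relations \eqref{LRAs} and \eqref{LRLeib} of $\LieR$, so the defining ideal of $\DCom$ is sent into the defining ideal of $\LieR$. I would then pass to the associated shuffle coloured operads, where both operads have been endowed with an explicit quadratic Gr\"obner basis.

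The key step is to compare the two $QM$-orderings. In the proof of the theorem for $\LieR$ the variables were stratified into light ($\beta,m,n$), heavy ($d,e$) and super-heavy ($\alpha$); restricted to shuffle monomials whose vertices are labelled only by $\{\alpha,d,e\}$, this ordering coincides with the $QM$-ordering used in the theorem for $\DCom$ (with $\alpha$ of type $x$ and $d,e$ of type $y$). Consequently, the leading terms of the Gr\"obner basis of $\DCom$ are exactly those leading terms in the Gr\"obner basis of $\LieR$ whose vertex labels all lie in $\{\alpha,d,e\}$. A tree monomial whose vertices only carry labels in $\{\alpha,d,e\}$ can never be divisible by any leading term of $\LieR$ that involves $\beta$, $m$ or $n$; therefore such a tree is normal for $\DCom$ if and only if it is normal for $\LieR$.

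Since normal monomials form linear bases of the respective shuffle operads, $\phi$ identifies the basis of $\DCom$ with a subset of the basis of $\LieR$, so $\phi$ is injective on the shuffle side and hence on the symmetric side. Unpacked through the structural identification $\LieR\simeq N_m\circ(\DCom^1,\Lie)$ from the preceding theorem, the image is the sub-collection $(\DCom^1,\Lie)\subset N_m\circ(\DCom^1,\Lie)$ extracted by the arity-one part of $N_m$, intersected with the image of $\DCom$; this intersection is exactly $\DCom^1$ together with the unit of color $2$, which equals $\DCom$ as a coloured collection since every generator of $\DCom$ has output of color $1$. The only genuine bookkeeping is verifying that the restriction of the stratified $QM$-ordering of $\LieR$ to trees in $\{\alpha,d,e\}$ agrees with the $QM$-ordering of $\DCom$; this is immediate from how both orderings were defined on the shared sub-alphabet, so no computation is really required beyond what has already been done in the two previous theorems.
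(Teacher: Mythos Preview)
Your argument is correct and is essentially the approach the paper has in mind: the corollary is stated without proof in the paper, and the implicit justification is exactly the comparison of normal forms coming from the two quadratic Gr\"obner bases you carry out. Your final paragraph invoking the decomposition $N_m\circ(\DCom^1,\Lie)$ is unnecessary and somewhat muddled---the injectivity is already established once you observe that the $\{\alpha,d,e\}$-labelled normal monomials of $\DCom$ are precisely the $\{\alpha,d,e\}$-labelled normal monomials of $\LieR$---but it does no harm to the overall correctness.
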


\subsection{$\DerCom$ operad}
The $\DerCom$ operad is an operad governing pairs of a commutative algebra $S$ and a Lie algebra $L$, such that $L$ acts on $S$ by derivations and $L$ has a structure of $S$-module.
In combinatorial terms the operad $\DerCom$ is an operad on $2$ colours $\{c,l\}$, is generated by the following list of binary operations:
\begin{itemize}
	\item $\alpha(\ttt,\ttt)\in \DerCom(2,0,c)$ -- a commutative associative product;
	\item $[\ttt,\ttt]\in \DerCom(0,2,l)$ -- a Lie bracket of the derivations, yielding the Jacobi identity;
	\item $d(\ttt,\ttt)\in \DerCom(1,1,c)$ -- the action of the derivation on the elements of a commutative algebra;
	\item $m(\ttt,\ttt)\in \DerCom(1,1,l)$ -- the action of a commutative algebra on the Lie algebra of derivations.
\end{itemize}

It is immediate to see that $\DerCom$ is essentially the operad $\LieR$ without two relations \eqref{LRA} and \eqref{LRB}. 

\begin{theo}
The same choice of ordering and the leading terms also leads to a quadratic Gr\"obner basis for $\DerCom$.
\end{theo}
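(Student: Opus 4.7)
The plan is to reuse the $\LieR$ argument verbatim after observing that removing the two Lie--Rinehart specific relations \eqref{LRA} and \eqref{LRB} leaves the relevant $S$-polynomial reductions unaffected. Since $\DerCom$ has the same generators \eqref{LRGen}, \eqref{LRShGen} as $\LieR$ together with the five retained relations \eqref{LRAs}, \eqref{LRJac}, \eqref{LRLeib}, \eqref{LRMor}, \eqref{LRSMod}, the QM-ordering of \S\ref{subSecLR} produces leading monomials that form exactly the sublist of the $\LieR$ leading terms obtained by deleting the twelve monomials arising from \eqref{LRA} and \eqref{LRB}. Consequently the $S$-polynomials to be checked for $\DerCom$ are a sub-collection of those already handled.

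First I would identify which pairs of retained relations can produce a small common multiple. Inspecting the leading-term lists in the proof of the $\LieR$ theorem: the leading monomials of \eqref{LRSMod} involve only the module vertices $m$ and $n$; those of \eqref{LRLeib} involve $\alpha$ together with $d$ or $e$; those of \eqref{LRMor} involve $\beta$ together with $d$ or $e$; \eqref{LRAs} gives $\alpha$-trees and \eqref{LRJac} gives $\beta$-trees. Two leading monomials with disjoint vertex labels cannot share any subtree, so the pairs admitting small common multiples are exactly the five diagonal pairs together with $(\eqref{LRAs},\eqref{LRLeib})$, $(\eqref{LRJac},\eqref{LRMor})$ and the cross-pair $(\eqref{LRLeib},\eqref{LRMor})$.

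Next I would reduce each of these eight $S$-polynomials to zero using only the retained relations. The triples $(\eqref{LRAs},\eqref{LRAs})$, $(\eqref{LRAs},\eqref{LRLeib})$, $(\eqref{LRLeib},\eqref{LRLeib})$ lie inside the $\DCom$ sub-operad whose quadratic Gr\"obner basis is already established in \S\ref{subSecDCom}; the triples $(\eqref{LRJac},\eqref{LRJac})$, $(\eqref{LRJac},\eqref{LRMor})$, $(\eqref{LRMor},\eqref{LRMor})$ lie inside the $\MLie$ sub-operad; the pair $(\eqref{LRLeib},\eqref{LRMor})$ is a pure $\{\alpha,\beta,d,e\}$-sector computation matching one of the reductions from Appendix~\ref{sec::LR::computations} that manifestly uses only retained relations. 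Finally the pair $(\eqref{LRSMod},\eqref{LRSMod})$ reduces directly: the small common multiple $m(1,m(2,m(3,4)))$ rewrites in the two possible orders to $m(\alpha(\alpha(1,2),3),4)$ and $m(\alpha(1,\alpha(2,3)),4)$, whose difference vanishes by \eqref{LRAs}, and the remaining cases differ only by substituting some $m$'s with $n$'s.

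The only subtle point, and the place where the argument could in principle fail, is verifying that none of the inherited reductions secretly relied on the deleted relations \eqref{LRA} or \eqref{LRB} as intermediate rewriting steps. This is automatic from the sector analysis of the second paragraph: the leading monomials of \eqref{LRA} and of \eqref{LRB} each contain both a module vertex ($m$ or $n$) and a commutative-Lie vertex ($d$, $e$ or $\beta$), so they straddle the two sectors, whereas every retained $S$-polynomial is supported on generators lying entirely within a single sector (either $\{\alpha,\beta,d,e\}$ or $\{m,n,\alpha\}$). Hence the deleted relations never arise, and the described list of leading monomials is indeed a quadratic Gr\"obner basis for $\DerCom$.
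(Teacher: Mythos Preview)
Your proof is correct and follows the same approach as the paper: both argue that the $S$-polynomials for $\DerCom$ are a subset of those for $\LieR$ and that the reductions of these particular $S$-polynomials never invoke the deleted relations \eqref{LRA} or \eqref{LRB}. The paper's proof is a one-line appeal to the explicit computations in Appendix~\ref{sec::LR::computations}; your version replaces that appeal with an a~priori sector argument (the retained $S$-polynomials live either in the $\{\alpha,\beta,d,e\}$-sector or the $\{m,n,\alpha\}$-sector, and the deleted relations straddle both), which is slightly more conceptual but amounts to the same thing. Two small remarks: the diagonal pairs $(\eqref{LRLeib},\eqref{LRLeib})$ and $(\eqref{LRMor},\eqref{LRMor})$ that you list are in fact empty (the leading terms have root in $\{d,e\}$ and inner vertex in $\{\alpha\}$ respectively $\{\beta\}$, so no overlap is possible), and your identification with the $\MLie$ sub-operad requires a colour swap, but neither affects the argument.
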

\begin{proof}
The set of $S$-polynomials for $\DerCom$ is the subset of  $S$-polynomials computed for $\LieR$ and the corresponding reductions do not involve relations \eqref{LRA} and \eqref{LRB} as one can see from the computations presented in Appendix~\ref{sec::LR::computations}. 
\end{proof}	

\appendix

\section{$S$-polynomial for $\LieR$}
\label{sec::LR::computations}
In the appendices we provide a sample of computations for the operads $\LP$ and $\LieR$. The full computation is too voluminous to include here, but by the merit of  $\Sigma_n$-symmetry of the set of the leading terms, it suffice to provide one example for every pair of relations with a non-trivial $S$-polynomial.

The corresponding shuffle coloured operad has the following list of relations where
 we underline the leading monomial in each relation: 
\begin{equation}
\tag{Com1}
\alpha(\alpha(1,2),3)\underline{ - \alpha(\alpha(1,3),2)} = 0
\end{equation}
\begin{equation}
~\tag{Com2}
\alpha(\alpha(1,2),3)\underline{ - \alpha(1,\alpha(2,3))} = 0
\end{equation}
\begin{equation}
\tag{Lie}
\underline{\beta(\beta(1,2),3)} - \beta(\beta(1,3),2) - \beta(1,\beta(2,3)) = 0
\end{equation}
\begin{equation}
\tag{Leib1}
\underline{e(1,\alpha(2,3))} - \alpha(e(1,2),3) - \alpha(e(1,3),2) = 0
\end{equation}
\begin{equation}
\tag{Leib2}
\underline{d(\alpha(1,3),2)} - \alpha(d(1,2),3) - \alpha(1,e(2,3)) = 0
\end{equation}
\begin{equation}
\tag{Leib3}
\underline{d(\alpha(1,2),3)} - \alpha(1,d(2,3)) - \alpha(d(1,3),2) = 0
\end{equation}
\begin{equation}
\tag{Mor1}
\underline{d(1,\beta(2,3))} + d(d(1,2),3) - d(d(1,3),2) = 0
\end{equation}
\begin{equation}
\tag{Mor2}
\underline{e(\beta(1,3),2)} + d(e(1,2),3) - e(1,d(2,3)) = 0
\end{equation}
\begin{equation}
\tag{Mor3}
\underline{ - d(1,\beta(2,3))} + d(d(1,3),2) - d(d(1,2),3) = 0
\end{equation}
\begin{equation}
\tag{Mor4}
\underline{ - e(\beta(1,2),3)} + e(1,e(2,3)) - d(e(1,3),2) = 0
\end{equation}
\begin{equation}
\tag{Mor5}
\underline{ - e(\beta(1,3),2)} + e(1,d(2,3)) - d(e(1,2),3) = 0
\end{equation}
\begin{equation}
\tag{Mor6}
\underline{e(\beta(1,2),3)} + d(e(1,3),2) - e(1,e(2,3)) = 0
\end{equation}
\begin{equation}
\tag{SMod1}
m(\alpha(1,2),3)\underline{ - m(1,m(2,3))} = 0
\end{equation}
\begin{equation}
\tag{SMod2}
m(\alpha(1,2),3)\underline{ - n(m(1,3),2)} = 0
\end{equation}
\begin{equation}
\tag{SMod3}
m(\alpha(1,3),2)\underline{ - m(1,n(2,3))} = 0
\end{equation}
\begin{equation}
\tag{SMod4}
n(1,\alpha(2,3))\underline{ - n(n(1,2),3)} = 0
\end{equation}
\begin{equation}
\tag{SMod5}
n(1,\alpha(2,3))\underline{ - n(n(1,3),2)} = 0
\end{equation}
\begin{equation}
\tag{SMod6}
m(\alpha(1,3),2)\underline{ - n(m(1,2),3)} = 0
\end{equation}
\begin{equation}
\tag{LR-A1}
\underline{e(m(1,2),3)} - \alpha(1,e(2,3))= 0
\end{equation}
\begin{equation}
\tag{LR-A2}
\underline{e(n(1,2),3)} - \alpha(e(1,3),2)= 0
\end{equation}
\begin{equation}
\tag{LR-A3}
\underline{d(1,n(2,3))} - \alpha(d(1,2),3)= 0
\end{equation}
\begin{equation}
\tag{LR-A4}
\underline{e(m(1,3),2)} - \alpha(1,d(2,3))= 0
\end{equation}
\begin{equation}
\tag{LR-A5}
\underline{d(1,m(2,3))} - \alpha(d(1,3),2)= 0
\end{equation}
\begin{equation}
\tag{LR-A6}
\underline{e(n(1,3),2)} - \alpha(e(1,2),3)= 0
\end{equation}
\begin{equation}
\tag{LR-B1}
\label{LR-B1}
\underline{\beta(1,m(2,3))} - m(e(1,2),3) - n(\beta(1,3),2) = 0
\end{equation}
\begin{equation}
\tag{LR-B2}
\underline{ - \beta(m(1,3),2)} - m(d(1,2),3) - m(1,\beta(2,3)) = 0
\end{equation}
\begin{equation}
\tag{LR-B3}
\underline{ - \beta(n(1,2),3)} - n(1,d(2,3)) + n(\beta(1,3),2) = 0
\end{equation}
\begin{equation}
\tag{LR-B4}
\underline{\beta(1,n(2,3))} - m(e(1,3),2) - n(\beta(1,2),3) = 0
\end{equation}
\begin{equation}
\tag{LR-B5} 
\underline{ - \beta(n(1,3),2)} - n(1,e(2,3)) + n(\beta(1,2),3) = 0
\end{equation}
\begin{equation}
\tag{LR-B6}
\underline{ - \beta(m(1,2),3)} - m(d(1,3),2) + m(1,\beta(2,3)) = 0
\end{equation}

 Below is the list of representatives of $\Sigma_4$-orbits of the set of all reductions. 

 Reduction of the $S$-polynomial for relations LR-B1 and LR-B2 associated with the small common multiple $\beta(m(1,4),m(2,3))$:
 \begin{align*} 
& - m(e(m(1,4),2),3) - \underline{n(\beta(m(1,4),3),2)} - m(d(1,m(2,3)),4) - m(1,\beta(m(2,3),4)) \stackrel{ LR-B2 }{=}  \\ 
& - m(e(m(1,4),2),3) - m(d(1,m(2,3)),4) - \underline{m(1,\beta(m(2,3),4))} + n(m(d(1,3),4),2)\\ 
 &  + n(m(1,\beta(3,4)),2) \stackrel{ LR-B6 }{=}  \\ 
& - \underline{m(e(m(1,4),2),3)} - m(d(1,m(2,3)),4) + n(m(d(1,3),4),2) + n(m(1,\beta(3,4)),2)\\ 
 &  + m(1,m(d(2,4),3)) - m(1,m(2,\beta(3,4))) \stackrel{ LR-A4 }{=}  \\ 
& - \underline{m(d(1,m(2,3)),4)} + n(m(d(1,3),4),2) + n(m(1,\beta(3,4)),2) + m(1,m(d(2,4),3))\\ 
 &  - m(1,m(2,\beta(3,4))) - m(\alpha(1,d(2,4)),3) \stackrel{ LR-A5 }{=}  \\ 
&n(m(d(1,3),4),2) + n(m(1,\beta(3,4)),2) + \underline{m(1,m(d(2,4),3))} - \underline{m(1,m(2,\beta(3,4)))}\\ 
 &  - m(\alpha(1,d(2,4)),3) - m(\alpha(d(1,3),2),4) \stackrel{ SMod1 }{=}  \\ 
&\underline{n(m(d(1,3),4),2)} + \underline{n(m(1,\beta(3,4)),2)} - m(\alpha(d(1,3),2),4) - m(\alpha(1,2),\beta(3,4)) \stackrel{ SMod2 }{=}  0\\ 
\end{align*} 
 Reduction of the $S$-polynomial for relations LR-B1 and LR-B2 associated with the small common multiple $ d(1,\beta(2,m(3,4)))$: 
 \begin{align*} &- d(1,m(e(2,3),4)) - \underline{d(1,n(\beta(2,4),3))} - d(d(1,2),m(3,4)) + d(d(1,m(3,4)),2) \stackrel{ LR-A3 }{=}  \\ 
& - \underline{d(1,m(e(2,3),4))} - \underline{d(d(1,2),m(3,4))} + \underline{d(d(1,m(3,4)),2)} - \alpha(d(1,\beta(2,4)),3) \stackrel{ LR-A5 }{=}  \\ 
& - \underline{\alpha(d(1,\beta(2,4)),3)} - \alpha(d(1,4),e(2,3)) - \alpha(d(d(1,2),4),3) + d(\alpha(d(1,4),3),2) \stackrel{ Mor1 }{=}  \\ 
& - \alpha(d(1,4),e(2,3)) + \underline{d(\alpha(d(1,4),3),2)} - \alpha(d(d(1,4),2),3) \stackrel{ Leib2 }{=}  0\\ 
\end{align*} 
 
{Reducing S-polynomial for relations LR-B1 and SMod1 associated with small common multiple} $\beta(1,m(2,m(3,4)))$: 
\begin{align*}& - m(e(1,2),m(3,4)) - \underline{n(\beta(1,m(3,4)),2)} + \underline{\beta(1,m(\alpha(2,3),4))} \stackrel{ LR-B1 }{=}  \\ 
& - \underline{m(e(1,2),m(3,4))} - n(m(e(1,3),4),2) - n(n(\beta(1,4),3),2) + m(e(1,\alpha(2,3)),4)\\ 
 &  + n(\beta(1,4),\alpha(2,3)) \stackrel{ SMod1 }{=}  \\ 
& - \underline{n(m(e(1,3),4),2)} - n(n(\beta(1,4),3),2) + m(e(1,\alpha(2,3)),4) + n(\beta(1,4),\alpha(2,3))\\ 
 &  - m(\alpha(e(1,2),3),4) \stackrel{ SMod2 }{=}  \\ 
& - \underline{n(n(\beta(1,4),3),2)} + m(e(1,\alpha(2,3)),4) + n(\beta(1,4),\alpha(2,3)) - m(\alpha(e(1,2),3),4)\\ 
 &  - m(\alpha(e(1,3),2),4) \stackrel{ SMod5 }{=}  \\ 
&\underline{m(e(1,\alpha(2,3)),4)} - m(\alpha(e(1,2),3),4) - m(\alpha(e(1,3),2),4) \stackrel{ Leib1 }{=}  0\\ 
\end{align*} 
 
{Reducing S-polynomial for relations LR-B1 and Lie} associated with the small common multiple $ \beta(\beta(1,2),m(3,4))$: 
\begin{align*}& - m(e(\beta(1,2),3),4) - n(\beta(\beta(1,2),4),3) + \underline{\beta(\beta(1,m(3,4)),2)} + \underline{\beta(1,\beta(2,m(3,4)))} \stackrel{ LR-B1 }{=}  \\ 
& - m(e(\beta(1,2),3),4) - n(\beta(\beta(1,2),4),3) + \beta(m(e(1,3),4),2) + \beta(n(\beta(1,4),3),2)\\ 
 &  + \underline{\beta(1,m(e(2,3),4))} + \beta(1,n(\beta(2,4),3)) \stackrel{ LR-B1 }{=}  \\ 
& - m(e(\beta(1,2),3),4) - n(\beta(\beta(1,2),4),3) + \underline{\beta(m(e(1,3),4),2)} + \beta(n(\beta(1,4),3),2)\\ 
 &  + \beta(1,n(\beta(2,4),3)) + m(e(1,e(2,3)),4) + n(\beta(1,4),e(2,3)) \stackrel{ LR-B2 }{=}  \\ 
& - m(e(\beta(1,2),3),4) - n(\beta(\beta(1,2),4),3) + \beta(n(\beta(1,4),3),2) + \underline{\beta(1,n(\beta(2,4),3))}\\ 
 &  + m(e(1,e(2,3)),4) + n(\beta(1,4),e(2,3)) - m(d(e(1,3),2),4) - m(e(1,3),\beta(2,4)) \stackrel{ LR-B4 }{=}  \\ 
& - m(e(\beta(1,2),3),4) - n(\beta(\beta(1,2),4),3) + \underline{\beta(n(\beta(1,4),3),2)} + m(e(1,e(2,3)),4)\\ 
 &  + n(\beta(1,4),e(2,3)) - m(d(e(1,3),2),4) + n(\beta(1,\beta(2,4)),3) \stackrel{ LR-B5 }{=}  \\ 
& - \underline{m(e(\beta(1,2),3),4)} - n(\beta(\beta(1,2),4),3) + m(e(1,e(2,3)),4) - m(d(e(1,3),2),4)\\ 
 &  + n(\beta(1,\beta(2,4)),3) + n(\beta(\beta(1,4),2),3) \stackrel{ Mor4 }{=}  \\ 
& - \underline{n(\beta(\beta(1,2),4),3)} + n(\beta(1,\beta(2,4)),3) + n(\beta(\beta(1,4),2),3) \stackrel{ Lie }{=}  0\\ 
\end{align*} 
 
{Reducing S-polynomial for relations LR-A1 and SMod1} associated associated with the small common multiple  $ e(m(1,m(2,3)),4)  
$: 
\begin{align*}& - \underline{\alpha(1,e(m(2,3),4))} + \underline{e(m(\alpha(1,2),3),4)} \stackrel{ LR-A1 }{=}   
 - \underline{\alpha(1,\alpha(2,e(3,4)))} + \alpha(\alpha(1,2),e(3,4)) \stackrel{ Com2 }{=}  0\\ 
\end{align*} 
 
{Reducing S-polynomial for relations LR-A1 and Leib1} associated with the small common multiple $e(m(1,2),\alpha(3,4))$: 
\begin{align*}& - \alpha(1,e(2,\alpha(3,4))) + \underline{\alpha(e(m(1,2),3),4)} + \underline{\alpha(e(m(1,2),4),3)} \stackrel{ LR-A1 }{=}  \\ 
& - \underline{\alpha(1,e(2,\alpha(3,4)))} + \alpha(\alpha(1,e(2,3)),4) + \alpha(\alpha(1,e(2,4)),3) \stackrel{ Leib1 }{=}  \\ 
&\alpha(\alpha(1,e(2,3)),4) + \alpha(\alpha(1,e(2,4)),3) - \underline{\alpha(1,\alpha(e(2,3),4))} - \underline{\alpha(1,\alpha(e(2,4),3))} \stackrel{ Com2 }{=}  0\\ 
\end{align*} 
 {Reducing S-polynomial for relations Mor1 and Lie} associated with the small common multiple $ d(1,\beta(\beta(2,3),4)) 
$: \begin{align*}&\underline{d(d(1,\beta(2,3)),4)} - \underline{d(d(1,4),\beta(2,3))} + \underline{d(1,\beta(\beta(2,4),3))} + \underline{d(1,\beta(2,\beta(3,4)))} \stackrel{ Mor1 }{=}  \\ 
& - d(d(d(1,2),3),4) + d(d(d(1,3),2),4) + d(d(d(1,4),2),3) - d(d(d(1,4),3),2)\\ 
 &  - \underline{d(d(1,\beta(2,4)),3)} + \underline{d(d(1,3),\beta(2,4))} - \underline{d(d(1,2),\beta(3,4))} + \underline{d(d(1,\beta(3,4)),2)} \stackrel{ Mor1 }{=}  0\\ 
\end{align*} 
 
{Reducing S-polynomial for relations Mor1 and Leib2} associated with the small common multiple $d(\alpha(1,4),\beta(2,3)) 
$: \begin{align*}
&d(d(\alpha(1,4),2),3) - d(d(\alpha(1,4),3),2) + \underline{\alpha(d(1,\beta(2,3)),4)} + \alpha(1,e(\beta(2,3),4)) \stackrel{ Mor1 }{=}  \\ 
&d(d(\alpha(1,4),2),3) - d(d(\alpha(1,4),3),2) + \underline{\alpha(1,e(\beta(2,3),4))} - \alpha(d(d(1,2),3),4)\\ 
 &  + \alpha(d(d(1,3),2),4) \stackrel{ Mor4 }{=}  \\ 
&\underline{d(d(\alpha(1,4),2),3)} - \underline{d(d(\alpha(1,4),3),2)} - \alpha(d(d(1,2),3),4) + \alpha(d(d(1,3),2),4)\\ 
 &  + \alpha(1,e(2,e(3,4))) - \alpha(1,d(e(2,4),3)) \stackrel{ Leib2 }{=}  \\ 
& - \alpha(d(d(1,2),3),4) + \alpha(d(d(1,3),2),4) + \alpha(1,e(2,e(3,4))) - \alpha(1,d(e(2,4),3))\\ 
 &  + \underline{d(\alpha(d(1,2),4),3)} + d(\alpha(1,e(2,4)),3) - \underline{d(\alpha(d(1,3),4),2)} - \underline{d(\alpha(1,e(3,4)),2)} \stackrel{ Leib2 }{=}  \\ 
& - \alpha(1,d(e(2,4),3)) + \underline{d(\alpha(1,e(2,4)),3)} - \alpha(d(1,3),e(2,4)) \stackrel{ Leib3 }{=}  0\\ 
\end{align*} 
 
{Reducing S-polynomial for relations SMod1 and SMod2} associated with the small common multiple $ n(m(1,m(3,4)),2)  
$: \begin{align*}&n(m(\alpha(1,3),4),2) - \underline{m(\alpha(1,2),m(3,4))} \stackrel{ SMod1 }{=}  \\ 
&\underline{n(m(\alpha(1,3),4),2)} - m(\alpha(\alpha(1,2),3),4) \stackrel{ SMod2 }{=}  \\ 
& - m(\alpha(\alpha(1,2),3),4) + \underline{m(\alpha(\alpha(1,3),2),4)} \stackrel{ Com1 }{=}  0\\ 
\end{align*} 
 
{Reducing S-polynomial for relations Com1 and Com2} associated with the small common multiple $ \alpha(\alpha(1,3),\alpha(2,4))  
$: \begin{align*}&\alpha(\alpha(1,\alpha(2,4)),3) - \underline{\alpha(\alpha(\alpha(1,3),2),4)} \stackrel{ Com1 }{=}  \\ 
&\underline{\alpha(\alpha(1,\alpha(2,4)),3)} - \alpha(\alpha(\alpha(1,2),3),4) \stackrel{ Com2 }{=}  \\ 
& - \alpha(\alpha(\alpha(1,2),3),4) + \underline{\alpha(\alpha(\alpha(1,2),4),3)} \stackrel{ Com1 }{=}  0\\ 
\end{align*} 
 
{Reducing S-polynomial for relations Com1 and Leib1} associated with the small common multiple $e(1,\alpha(\alpha(2,4),3)) 
$: \begin{align*}&e(1,\alpha(\alpha(2,3),4)) - \alpha(e(1,\alpha(2,4)),3) - \underline{\alpha(e(1,3),\alpha(2,4))} \stackrel{ Com2 }{=}  \\ 
&\underline{e(1,\alpha(\alpha(2,3),4))} - \underline{\alpha(e(1,\alpha(2,4)),3)} - \alpha(\alpha(e(1,3),2),4) \stackrel{ Leib1 }{=}  \\ 
& - \alpha(\alpha(e(1,3),2),4) + \underline{\alpha(e(1,\alpha(2,3)),4)} + \alpha(e(1,4),\alpha(2,3)) - \alpha(\alpha(e(1,2),4),3)\\ 
 &  - \alpha(\alpha(e(1,4),2),3) \stackrel{ Leib1 }{=}  \\ 
&\alpha(e(1,4),\alpha(2,3)) - \underline{\alpha(\alpha(e(1,2),4),3)} - \alpha(\alpha(e(1,4),2),3) + \alpha(\alpha(e(1,2),3),4) \stackrel{ Com1 }{=}  \\ 
&\underline{\alpha(e(1,4),\alpha(2,3))} - \alpha(\alpha(e(1,4),2),3) \stackrel{ Com2 }{=}  0\\ 
\end{align*}

%Text of Appendix A is Here

%\section{\\Title of Appendix B}
% the \\ insures the section title is centered below the phrase: Appendix B

%Text of Appendix B is Here

\end{document}